\documentclass[11pt, a4paper,reqno]{amsart}
\pdfoutput=1
\usepackage{tikz,enumitem, rotating,latexsym, bm,stmaryrd, caption,appendix}
\usetikzlibrary{positioning, decorations, shadings,shapes,arrows,shapes.geometric,calc,matrix,patterns}
\usepackage{enumitem}   
\usepackage{verbatim}

\definecolor{ao(english)}{rgb}{0.0, 0.5, 0.0}
\definecolor{darkgreen}{rgb}{0.0, 0.5, 0.0}

	\definecolor{eng}{rgb}{0.0, 0.5, 0.0}
\definecolor{apple}{rgb}{0.55, 0.71, 0.0}
\definecolor{cadmium}{rgb}{0.0, 0.42, 0.24}
\definecolor{darkspringgreen}{rgb}{0.09, 0.45, 0.27}
\definecolor{amethyst}{rgb}{0.6, 0.4, 0.8}
\definecolor{ao}{rgb}{0.0, 0.0, 1.0}
\definecolor{atomictangerine}{rgb}{1.0, 0.6, 0.4}
\definecolor{carmine}{rgb}{0.59, 0.0, 0.09}

\definecolor{toggle}{rgb}{1.0, 0.94, 0.96}

\usepackage[normalem]{ulem}

\def\down{\vee}
\def\up{\wedge}

\tikzset{
  variable line width/.style={
    every variable line width/.append style={#1},
    to path={%
      \pgfextra{%
        \draw[every variable line width/.try,line width=\pgfkeysvalueof{/tikz/thickness}] (\tikztostart) -- (\tikztotarget);
      }%
      (\tikztotarget)
    },
  },
  thickness/.initial=0.6pt,
  every variable line width/.style={line cap=round, line join=round},
}

\usepackage{thmtools}

 \usepackage{amsmath,amsthm,amsfonts,amssymb,mathrsfs,pb-diagram}
\usepackage[
bookmarks=true,colorlinks=true,linktoc=page,citecolor=green!80!black,linkcolor=green!80!black,urlcolor=green!80!black]{hyperref}
\usepackage{caption}
\usepackage{lipsum,wasysym}
\usepackage{mathtools}
\usepackage[a4paper,margin=2cm]{geometry}
\usepackage{cleveref}
 \usepackage{amsmath}
\mathchardef\mhyphen="2D
\usepackage{color}
\usepackage{xcolor}
\usepackage{ifthen}
\usepackage{sidecap}   
\definecolor{mediumblue}{rgb}{0.0, 0.0, 0.8}
 
\newcommand{\mptn}{{\mathscr{P}_{n}}}

\renewcommand{\geq}{\geqslant}
\renewcommand{\leq}{\leqslant}

\tikzset{wei/.style= 
{red,double=red,double
distance=0.5pt}}

\newcommand{\cp}{{\color{cyan}p}}
\newcommand{\ps}{{\color{purple}s}}
\newcommand{\gr}{{\color{darkgreen}r}}
\newcommand{\mq}{{\color{magenta}q}}
\newcommand{\ot}{{\color{orange}t}}

\newcommand{\vu}{{\color{violet}u}}
\newcommand{\ka}{\kappa}

\tikzset{wei2/.style={red,double=red,double
distance=0.5pt}}

\allowdisplaybreaks
\numberwithin{equation}{section}
\usepackage{scalefnt}

\newtheorem{thm}{Theorem}[section]

\newtheorem{lem}[thm]{Lemma}

\newtheorem{prop}[thm]{Proposition}

\newtheorem*{prop*}{Proposition}

\newtheorem*{thmA}{Theorem A}

\newtheorem*{cor*}{Corollary}

\newtheorem*{conj*}{Conjecture D}

\newtheorem*{conj1*}{Conjecture B}
\newtheorem*{Acknowledgements*}{Acknowledgements}

\newtheorem{rmk}[thm]{Remark}
\newtheorem{defn}[thm]{Definition}

\newcommand{\la}{\lambda}
\newcommand{\La}{\Lambda}

\newcommand{\ZZ}{{\mathbb Z}}

\let\<=\langle
\let\>=\rangle

\newcommand{\DP}{\underline{\mu}\la}

\crefname{ques}{Question}{Questions}
\crefname{defn}{Definition}{Definitions}
\crefname{thm}{Theorem}{Theorems}
\crefname{prop}{Proposition}{Propositions}
\crefname{lem}{Lemma}{Lemmas}
\crefname{cor}{Corollary}{Corollaries}
\crefname{conj}{Conjecture}{Conjectures}
\crefname{section}{Section}{Sections}
\crefname{subsection}{Subsection}{Subsections}
\crefname{eg}{Example}{Examples}
\crefname{figure}{Figure}{Figures}
\crefname{rem}{Remark}{Remarks}
\crefname{rmk}{Remark}{Remarks}
\crefname{equation}{equation}{equation}

\Crefname{ques}{Question}{Questions}
\Crefname{defn}{Definition}{Definitions}
\Crefname{thm}{Theorem}{Theorems}
\Crefname{prop}{Proposition}{Propositions}
\Crefname{lem}{Lemma}{Lemmas}
\Crefname{cor}{Corollary}{Corollaries}
\Crefname{conj}{Conjecture}{Conjectures}
\Crefname{section}{Section}{Sections}
\Crefname{subsection}{Subsection}{Subsections}
\Crefname{eg}{Example}{Examples}
\Crefname{figure}{Figure}{Figures}
\Crefname{rem}{Remark}{Remarks}
\Crefname{rmk}{Remark}{Remarks}

\usepackage[hang,flushmargin]{footmisc}

\begin{document}

 \title[Isotropic meta Kazhdan--Lusztig combinatorics II]{
Isotropic meta Kazhdan--Lusztig combinatorics II: Isomorphism to the generalised Khovanov arc algebra}

 \author{Ben Mills}
       \address{Department of Mathematics, 
University of York, Heslington, York,  UK}
\email{ben.mills@york.ac.uk}

\begin{abstract}
We construct an explicit isomorphism between the generalised Khovanov arc algebras of type D and the basic algebras of the anti-spherical Hecke category associated to the maximal parabolic subgroup $W (A_{n-1})$ of $W (D_n)$. This isomorphism maps generators to generators, thereby equipping the arc algebras with an Ext-quiver and relations presentation.
\end{abstract}

 \maketitle

  \vspace{-0.5cm}
   
  \section{Introduction}

   The ($p$)-Kazhdan--Lusztig polynomials feature prominently in a wide spectrum of problems in representation theory, geometry, and Lie theory. Given this ubiquity, in the companion to this paper \cite{My1}, we posed two questions regarding the isotropic form of these polynomials and whether the information they provide can be enriched to illuminate the structures they govern. We addressed the first question in \cite{My1} by enriching the type $(D_n, A_{n-1})$ decorated cup-cap algorithms, associating them with light leaves basis elements in the basic algebra of the anti-spherical Hecke category of isotropic Grassmannians, $\mathcal{H}_{(D_n,A_{n-1})}$. This association yielded an Ext-quiver and relations presentation of $\mathcal{H}_{(D_n,A_{n-1})}$.

 In this paper, we address the second question posed in \cite{My1}. Utilising the aforementioned presentation, we construct an explicit, elementary isomorphism between $\mathcal{H}_{(D_n,A_{n-1})}$ and the generalised Khovanov arc algebra of type $D$, denoted $\mathbb{D}_{\Lambda}$, as defined by Ehrig and Stroppel in \cite{TypeDKhov}. 
 
As remarked in \cite{TypeDKhov}, the algebra $\mathbb{D}_{\Lambda}$ is constructed from decorated cup diagrams, closely mirroring the diagrammatic framework of \cite{MR2813567,LEJCZYK_STROPPEL_2013} used to calculate the parabolic Kazhdan--Lusztig polynomials of type $(D_n, A_{n-1})$. Furthermore, building on the foundations of \cite{Soe07,elias2014hodgetheorysoergelbimodules}, Libedinsky and Williamson \cite{antiLW} established that these related Kazhdan--Lusztig polynomials can also be interpreted as composition multiplicities of standard modules in the anti-spherical Hecke category. We upgrade this shared combinatorial behaviour into an explicit algebraic equivalence in our main theorem: this proves that these two distinct structures, which are governed by the same Kazhdan--Lusztig polynomials, are equivalent. 

In the following, let $\Bbbk$ be an integral domain containing an element $i$ such that $i^2=-1$ and let $\La$ be a block of defect $k=\lfloor \frac{n}{2}\rfloor$ with parity matching $n$.

\begin{thmA}
\label{thm:A}
The generalised type $D$ Khovanov arc algebras $\mathbb{D}_\La$ are isomorphic (as $\mathbb{Z}$-graded $\Bbbk$-algebras) to the basic algebras $\mathcal{H}_{(D_n,A_{n-1})}$ of the anti-spherical Hecke categories associated to type $D_{n}$ with maximal parabolic of type $A_{n-1}$ for all $n\in\mathbb{N}$.
  \end{thmA}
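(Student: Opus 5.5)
The strategy is to use the Ext-quiver and relations presentation of $\mathcal{H}_{(D_n,A_{n-1})}$ from the companion paper \cite{My1}. Since that presentation is complete, to define an algebra homomorphism $\Phi\colon \mathcal{H}_{(D_n,A_{n-1})}\to \mathbb{D}_\La$ it suffices to specify where the quiver generators go and to check that every defining relation holds in $\mathbb{D}_\La$.

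\emph{Step 1: the bijection on idempotents and generators.} I will first match the two index sets. Weights of the block $\La$ of defect $k=\lfloor n/2\rfloor$, as decorated $\up\down$-sequences with the parity condition, will be put in bijection with the cosets ${}^{A_{n-1}}W(D_n)$, or equivalently with decorated cup diagrams. This matches the light-leaves idempotents $1_\la$ with the idempotents $e_\la=\underline{\la}\la\overline{\la}$ of $\mathbb{D}_\La$. Next, the degree-one generators of $\mathcal{H}$ are the light leaves $D^\mu_\la$ and $D^\la_\mu$ for $\la,\mu$ differing by one cup (possibly a dotted or nested one). I will send these to the degree-one arc diagrams $\underline{\mu}\la\overline{\la}$ and $\underline{\la}\la\overline{\mu}$, rescaled by signs and powers of $i$ where necessary. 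These scalars are the reason $i\in\Bbbk$ is required: the sign conventions for dotted circles in \cite{TypeDKhov} differ from the Soergel diagrammatic signs. The degree-one part of $\mathbb{D}_\La$ is known to be spanned by exactly these elements, so $\Phi$ is a bijection on generators. I will also check that it preserves the $\mathbb{Z}$-grading.

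\emph{Step 2: verifying the relations.} The relations of \cite{My1} are of a few local types. There are idempotent relations, self-dual (adjacent) relations expressing $D^\la_\mu D^\mu_\la$ as a signed combination of degree-two loops, commuting relations for far-apart cups, non-commuting relations for nested or adjacent cups, and the special relations involving dotted cups at the left wall of type $D$. For each type, I will compute the corresponding product in $\mathbb{D}_\La$ using the surgery (merge/split) rules of the type $D$ arc algebra. These computations are local, involving at most three or four cups, so each reduces to a finite list of configurations. This verification is the main obstacle. The delicate cases are those where a surgery creates or destroys a dotted circle, or joins two dotted arcs, because the Ehrig--Stroppel signs depend on the positions of the cups. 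I would handle this by fixing the scalars in Step 1 by a global rule, for example $i$ raised to a power counting dotted cups to the left. I would then check that this rule makes the two sides agree in each configuration. If a uniform rule fails, I would instead choose the scalars inductively along the Bruhat order and verify consistency around each square relation.

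\emph{Step 3: bijectivity.} $\Phi$ is surjective because $\mathbb{D}_\La$ is generated in degrees zero and one, which holds since the generalised type $D$ arc algebra is Koszul and hence quadratic; alternatively, each basis diagram can be written directly as a product of degree-one diagrams. For injectivity, I will compare graded dimensions. Both algebras have cellular bases indexed by pairs of oriented cup diagrams: light leaves on one side, and $\underline{\la}\nu\overline{\mu}$ on the other. In both cases the graded ranks $\dim 1_\la A 1_\mu$ equal $\sum_\nu d_{\la\nu}(q)d_{\mu\nu}(q)$ for the same type $(D_n,A_{n-1})$ parabolic Kazhdan--Lusztig polynomials, by \cite{antiLW} and \cite{LEJCZYK_STROPPEL_2013}. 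Since both are free $\Bbbk$-modules of equal finite graded rank, a surjective graded map between them is an isomorphism over an integral domain. This gives Theorem A.
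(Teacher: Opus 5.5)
Your architecture is the paper's: use the presentation from \cite{My1}, send generators to generators up to scalars, check the relations by surgery, get surjectivity from generation in degrees $0$ and $1$, and get injectivity from equal graded dimensions. Your Step 3 is fine. The gap is Step 2, which carries the whole content of the theorem. You leave it as a plan, and the shortcuts you propose would fail.

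First, you never fix the scalars, and your sample rule (a power of $i$ counting dotted cups to the left) cannot work. Take $n=4$, $\mu=\down\down\up\up$, $\cp=(\tfrac32,\tfrac52)$ and $\la=\mu-\cp=\down\up\down\up$. The cups of $\underline{\la}$ adjacent to $\cp$ are $\gr=(\tfrac12,\tfrac32)$ and $\ot=(\tfrac52,\tfrac72)$. No dotted cup occurs in $\underline{\mu},\underline{\la},\underline{\la-\gr},\underline{\la-\ot}$, so your rule gives trivial scalars. Here \eqref{selfdualrelA} reads $D^\la_\mu D^\mu_\la=-\big(D^\la_{\la-\gr}D^{\la-\gr}_\la+D^\la_{\la-\ot}D^{\la-\ot}_\la\big)$. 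In $\mathbb{D}_4$, however, the product $(\underline{\la}\la\overline{\mu})(\underline{\mu}\la\overline{\la})$ (a merge, then a split) equals $+\big(\underline{\la}(\la-\gr)\overline{\la}+\underline{\la}(\la-\ot)\overline{\la}\big)$. By \cref{idempot lem}, the two products on the right-hand side are exactly these two diagrams, so the relation fails with trivial scalars. The paper's normalisation $\Psi(D^\la_\mu)=i^{\ka(\la,\mu)}\underline{\la}\la\overline{\mu}$, $\Psi(D_\la^\mu)=i^{\ka(\la,\mu)}\underline{\mu}\la\overline{\la}$, with $\ka(\la,\mu)=\tfrac12(l_\cp+r_\cp)$ the midpoint of the flipped cup, supplies the missing sign: $\ka=2$ for $\cp$ but $\ka=1,3$ for $\gr,\ot$. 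Your fallback, choosing scalars inductively along the Bruhat order and checking squares, is not an argument either. The self-dual relations are not square relations: they tie the product of the scalars on $D^\la_\mu$ and $D^\mu_\la$ to the scalars on every other loop at $\la$, and you give no reason a consistent solution exists.

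Second, your claim that each check is local, involves at most three or four cups, and reduces to a finite list of configurations is false. Multiplication in $\mathbb{D}_n$ is non-local: the split sign $(-1)^{\lfloor l_\ps\rfloor}$ depends on absolute position, and the factors ${\sf sign}_D$ depend on the tags of entire components. Moreover, the self-dual relations sum over all cups covering or doubly covering $\cp$, and there can be arbitrarily many. The idea you are missing is the reduction by contraction. On the Hecke side there are the isomorphisms $\Phi_k$ of \cref{dilation}, which carry factors $\pm i$ when $l_\cp<k<r_\cp$ (and $-1$ in certain decorated cases). On the arc side there are the maps $\tilde\Phi_k$ of \cref{KContraction homomorphism}, which carry no scalars. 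This mismatch, rather than a dotted-circle convention, is where the powers of $i$ come from, and it is what the midpoint $\ka$ is designed to absorb. The lemmas showing that $i^{\ka}$ transforms compatibly (\cref{signs} for the self-dual relations, and the mod $8$ statements about $C(\cp,\mq)$ for the adjacent relation) then reduce every relation to incontractible $\underline{\mu}\la$, i.e.\ $b(\cp)=1$. Even these form infinite families ($\mu=(1,2,\dots,c^k)$, $\mu=(1,2,\dots,c)$, \dots), and the self-dual relations are checked by explicit telescoping sums of the diagrams $D_i$.

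Finally, you must use the corrected split sign in \eqref{spliteq}. With the original sign of \cite{TypeDKhov} the multiplication is not associative (see the appendix), so verifying relations there would not yield a well-defined algebra map.
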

This mirrors the results established with maximal parabolic of type $A_m\times A_n$ within type $A_{m+n}$ in \cite{ChrisDyckPaper}, from which this paper draws its inspiration.
 
The Khovanov arc algebra has its original applications in categorical knot theory \cite{Khov,Strop1}, and has since found profound applications across low-dimensional topology, geometry and representation theory \cite{Khovanov_2002,Brundan2010-mb,MR2918294,Brundan2011-ye,Brundan2012-uw,stroppel2022categorificationtangleinvariantstqfts}.
This construction was subsequently specialised to the type $D$, or isotropic, setting in a series of papers by Ehrig and Stroppel \cite{TypeDKhov,Ehrig_Stroppel_2016,ehrig2016schurweyldualitybraueralgebra,ehrig2017category,ehrig2018nazarovwenzlalgebrascoidealsubalgebras}. Although the resulting algebra, $\mathbb{D}_{\Lambda}$, is significantly more combinatorially intricate, it still boasts a wealth of applications. Notably, the category of modules over $\mathbb{D}_{\Lambda}$ is equivalent to the category of perverse sheaves on isotropic Grassmannians of type $D_n$ (and, via an isomorphism of varieties, type $B_{n-1}$) \cite{TypeDKhov}. Furthermore, bimodules over $\mathbb{D}_{\Lambda}$ correspond to projective functors in parabolic category $\mathcal{O}$ of type $(D_n, A_{n-1})$ and categorify the type $D$ Temperley–Lieb algebra of Green \cite{MR1618912}. Subsequent work linked $\mathbb{D}_{\Lambda}$ and type $D$ Springer fibres \cite{Ehrig_Stroppel_2016}. Additionally, specific idempotent truncations of $\mathbb{D}_{\Lambda}$ are isomorphic to blocks of cyclotomic Nazarov--Wenzl algebras \cite{ehrig2018nazarovwenzlalgebrascoidealsubalgebras}, and are related to the orthosymplectic Lie supergroup and Brauer algebras \cite{ehrig2015koszulgradingsbraueralgebras,ehrig2016schurweyldualitybraueralgebra,ehrig2017category,heidersdorf2024khovanovalgebrastypeb}

On the other hand, the aforementioned interpretation of Kazhdan--Lusztig polynomials as composition multiplicities of standard objects in the anti-spherical Hecke category led to the proof of the Kazhdan--Lusztig positivity conjecture \cite{antiLW} and a counter-example to Lusztig's famous conjecture \cite{williamson2018paritysheavesheckecategory}. This category plays a role in geometric representation theory, appearing in the description of perverse sheaves on partial flag varieties \cite{RW20,williamson2018paritysheavesheckecategory}, serving as a diagrammatic categorification of Hecke algebras \cite{MR3555156, Lib15, Wil18}, and featuring in the study of singular blocks of category $\mathcal{O}$ \cite{Soe90, Str04}. Additionally, its diagrammatic presentation, where morphisms are described via coloured graphs subject to local relations, facilitates the explicit construction of a light leaves basis and enables highly efficient computations.

We anticipate that Theorem A will enable the further study of related symmetric algebras and standard extension algebras in the manner of \cite{bowman2024quiverpresentationsschurweylduality,Strop-Eber}. Specifically, by translating problems into the Ext-quiver and relations presentation of $\mathcal{H}_{(D_n,A_{n-1})}$ detailed in \cite{My1}, this approach circumvents some of the diagrammatic complexity of $\mathbb{D}_\Lambda$. Working within $\mathcal{H}_{(D_n,A_{n-1})}$ has the distinct advantage of making the multiplication local, yielding straightforward formulae for computing the products of generators and allowing us to exploit its monoidal structure and to use further identities in the (easier to work with) light leaves basis to make subsequent calculations simpler.

  \subsection{Structure of this paper.}
 In \cref{recap} we review the meta Kazhdan--Lusztig style combinatorics from \cite{My1} that is needed for the Ext-quiver and relations presentation of $\mathcal{H}_{(D_n,A_{n-1})}$ and in \cref{MultiplicationIsHard}, we define the generalised Khovanov arc algebras in type $D$, $\mathbb{D}_\La$. In \cref{KhovContract} we describe a method for contracting larger diagrams in $\mathcal{H}_{(D_n,A_{n-1})}$ and $\mathbb{D}_\La$, which will allow us to restrict our attention to diagrams of smaller rank. Then in \cref{TheActualIso} we define the map in Theorem A and show that it is indeed a $\mathbb{Z}$-graded isomorphism of $\Bbbk$-algebras.

  \section{Oriented cup combinatorics for the Hecke category}
  \label{recap}
     
 We start by reviewing the Kazhdan--Lusztig style combinatorics that is needed for the presentation of $\mathcal{H}_{(D_n,A_{n-1})}$  in \cite{My1}. We will later utilise this presentation to construct our isomorphism. Much of what is detailed in this section can be found in more generality in \cite{ChrisHSP,bowman2023orientedtemperleyliebalgebrascombinatorial}, and we refer to \cite{MR2813567,LEJCZYK_STROPPEL_2013} for the origin of this type $D$ cup-cap combinatorics. But in this section, we include only what we need for this paper, namely, type $D$ Kazhdan--Lusztig style oriented cup-cap combinatorics.
     
    \subsection{Weights and  Kazhdan--Lusztig polynomials }
    Let $W(C_n)$ denote the group of signed permutations of $\{1,\dots,n\}$  generated by  the 
    set of permutations $\{s_{0'},s_1,s_2,\dots , s_{n-1}\}$ with Coxeter relations encoded in the leftmost diagram of \cref{coxeterlabelD2}.
    We let  $W(D_n)$ denote the subgroup of even signed permutations of $\{1,\dots,n\}$  generated by  the 
    set of permutations $\{s_{0},s_1,s_2,\dots , s_{n-1}\}$ where $s_0=s_{0'}s_1s_{0'}\in C_n$, with Coxeter relations encoded in the rightmost diagram of \cref{coxeterlabelD2}. 
Lastly, we let $W(A_{n-1})$  denote the subgroup (of $W(D_n)\leq W(C_n)$)   generated by the reflections $\{s_1, s_2,....., s_{n-1}\}$, coloured white in \cref{coxeterlabelD2}.
     
 \begin{figure}[ht!]
 \begin{minipage}{6cm}

 \end{minipage}
 \caption{ The Dynkin diagrams for the Weyl groups $W(C_n)$ and $W( D_n)$ with the labelling to be used throughout this paper, with the node not belonging to the parabolic of type $W(A_{n-1})$ highlighted in pink.}
\label{coxeterlabelD2}
\end{figure}

 We define a \textsf{weight} to be a horizontal strip with $n$ vertices at half-integer $x$-coordinates $\{\frac{1}{2},\frac{3}{2},...,n-\frac{1}{2}\}$, each labelled by either $\up$ or $\down $. 
 We let $\{s_i \mid 1 \leq i \leq n\}$ act by permuting the labels at $i-\frac{1}{2}$ and $i+\frac{1}{2}$ and fixing every other node;
we let $s_{0'}$ flip the label at the 1st node, at $x=\frac{1}{2}$, through its horizontal axis (that is $s_{0'}(\vee) = \wedge$).  
In this manner, the generator $s_{0}=s_{0'}s_1s_{0'}$ can be seen as acting 
first by flipping the label at $x=\frac{1}{2}$ and $x=\frac{3}{2}$ through the horizontal axis and then permuting their order 
(that is, $s_{0}(\vee\vee) = \wedge \wedge$,  $s_{0}(\up\up) = \down\down$, $s_{0}(\vee\wedge) = \vee\wedge$ and $s_{0}( \wedge\vee) =  \wedge\vee$).

 For a parabolic Coxeter system $(W,P)$, we denote by ${^PW}$ the set of minimal length right coset representatives of $P\setminus W$. The action on weights described above allows us to label the entire set of cosets of ${^PW}$  for $P=W(A_{n-1})$ and $ W=W(C_n)$
  via the set of all weights.
  Similarly, we can encode a set of minimal length coset representatives of ${^PW}$  for $P=W(A_{n-1})$ and $W=W(D_n) $ 
  via the subset of all weights 
   with an {\em even} number of vertices labelled $\up$.
Specifically, we set the identity weight,  $\emptyset$, to be the weight consisting of  $n$ $\vee$'s. 
Then the other ($2^{n-1}$) cosets can be obtained by permuting the labels of the identity weight.
With this identification in place, we shall, for the remainder of this paper, set $(W,P)=(W(D_n),W(A_{n-1}))$ with generators as above.

We can also visualise the elements of $^PW$ as tilings within a restricted admissible region: $$\mathscr{A}_{(W,P)}\coloneqq\{ [r,c] \mid r,c \leq  n \text{ and }r-c\geq 0 \},$$ where we draw these tilings ``Russian Style" with rows (fixed values of $r$) pointing northwest and columns (fixed values of $c$) pointing northeast.
We say that a pair of tiles  
  are {\sf neighbouring} if they meet at an edge.
 Given a pair of neighbouring tiles $X$ and $Y$, we write $Y < X$ if $X$ appears above $Y$ (i.e. the $y$-coordinate of $X$ is strictly larger than that of $Y$). This extends to a partial order on all tiles in $\mathscr{A}_{(W,P)}$ and we say $Y$ {\sf supports} $X$ if $Y<X$ in this ordering. 
 We say that a collection of tiles, $\la\subseteq \mathscr{A}_{(W,P)}$ is a
  {\sf tile-partition} if every tile in $\la$ is
   supported and we denote the set of tile-partitions $\mptn$.  
For $\lambda  ,\mu  \in \mptn$, we define the {\sf Bruhat order} on tile-partitions by
 $\lambda  \leq \mu $ if  
$$\{[r,c] \mid [r,c] \in \lambda \}
\subseteq \{[r,c] \mid [r,c] \in \mu \}.$$

 \begin{figure}[ht!]
 \centering


\caption{
We depict the 
 identity weight $\emptyset$ along the bottom of the diagram, 
  the weight of $\mu=(1,2,3,4,5,3,3)$ along the top of the diagram,    
  and the tile-partition $\mu$ which corresponds to the coset labelled by the reduced word
  $\color{gray}s_0
    \color{red}s_2  
      \color{blue}s_3
          \color{cyan}s_4
          \color{magenta}s_5
  \color{lime!80!black}s_6
      \color{yellow}s_7  
    \color{darkgreen}s_1
        \color{red}s_2  
      \color{blue}s_3
          \color{cyan}s_4
          \color{magenta}s_5
  \color{lime!80!black}s_6
      \color{gray}s_0
    \color{red}s_2  
      \color{blue}s_3
          \color{cyan}s_4
          \color{magenta}s_5
            \color{darkgreen}s_1
        \color{red}s_2  
         \color{gray}s_0$.
  }
\label{typeAtiling-long}
\end{figure}

\begin{rmk}
We have now labelled a coset, $^PW$, by both a weight diagram and a tile-partition. We now detail the specific bijection between the two labellings.  

To convert a weight diagram to a tile partition, we read the labels of the weight diagram from right to left. Beginning at the rightmost corner of the admissible region, trace a north-western step for each $\up$ label and a south-western step for each $\down$ label. After $n-1$ steps, we reach the left-hand ``jagged" wall of the admissible region, having traced the northern perimeter of the corresponding tile-partition. In particular, the identity coset corresponds to the weight diagram consisting of $n$ consecutive $\down$ labels, which traces the boundary of the empty tile partition $\emptyset$. This process can be reversed in the obvious manner.
\end{rmk}

  \begin{rmk}There is a natural bijection between ${^P}W$ and 
$\mptn$ (see   \cite[Appendix]{MR3363009}) under which the length functions coincide. 
\end{rmk}

For conciseness and to explicitly track the parameter $n$, we will index each $\lambda \in {^P}W$ by its corresponding tile partition $\lambda \in \mptn$ (written as a sequence of row lengths). However, for the majority of our practical calculations and arguments, we will work directly with their weight diagrams.

Next we recall the construction of  the cup diagram $\underline{\mu}$ associated to a weight $\mu\in \mptn$ as defined originally
 in \cite{MR2813567,TypeDKhov}.

  \begin{defn}[Decorated cup diagram]
  \label{drawcups}
 For each weight $\mu\in\mptn$ we can construct the (decorated) {\sf cup diagram} $\underline{\mu}$ 
 using the following algorithm.
 \begin{enumerate}[leftmargin=*,label=(\roman*)]
\item Find a pair of vertices labelled $\down$ $\up$ in order from left to right that are neighbours in the sense that there are only vertices already joined by cups at an earlier stage in between. Join these new vertices together with an undecorated cup.
\item Repeat this process until there are no more such $\down$ $\up$ pairs. 
\item Then, starting from the left, connect neighbouring vertices labelled with $\up$ $\up$ with a decorated cup, which we will denote by drawing a $\bullet$ in the centre of the cup.
\item Finally, draw an infinite line vertically downwards at all the remaining $\up$ and $\down$ vertices. If there exists a (necessarily unique) $\up$ vertex remaining, then decorate the corresponding propagating line, drawing the $\bullet$ in the centre of the line. We refer to these vertical lines as {\sf rays}.
\end{enumerate}
\end{defn}

  \begin{figure}[ht!]
  $$   
$$
\caption{The construction of the cup diagram $\underline{\la}$ for $\la$ as in \cref{drawcups}.
	 See also \cite[Proposition 7.1]{bowman2023orientedtemperleyliebalgebrascombinatorial}.}
\label{construction}
	\end{figure}

  \begin{defn}
  \label{oriented or not?}
  For any $\la,\mu \in \mptn$, we can form a new diagram $ \underline{\mu} \la$ by glueing the weight $\la$ on top of $\underline{\mu}$. 
We say that $ \underline{\mu} \la$ is an {\sf oriented cup diagram} if 
\begin{itemize}
\item The vertices at either end of any undecorated cup have one of the labels pointing into the cup and the other pointing out of the cup.
\item  The vertices at either end of any decorated cup either both point in or both point out of the cup.
\item The vertex at the end of any undecorated ray is $\down$, and the vertex at the end of any decorated ray is $\up$.
\end{itemize}

For each strand $S$ in $\underline{\mu} \la$, we define \textsf{$l_S$} to be the $x$-coordinate of the leftmost vertex of $S$ and \textsf{$r_S$} to be the $x$-coordinate of the rightmost vertex of $S$. Recall that we have defined weights to have labels at the half-integer coordinates so $l_s, r_s \in \frac{1}{2}+\mathbb{Z}$.

 \end{defn}
 
 Some examples are depicted in \cref{removal} where the highlighted cup $\cp\in\underline{\mu}$ has $l_{\cp}=\frac{7}{2}$, $r_\cp=\frac{13}{2}$.
 Next, we define the breadth of a cup, a measure of its size, with a distinction between decorated and undecorated cups. For the reason for this distinction, see \cite[Section 2]{My1} or for an alternative point of view, \cite[Section 1.3]{TypeDKhov} and \cite[Subsection 4.2]{LEJCZYK_STROPPEL_2013}.
 
  \begin{defn}
Let $\mu\in \mptn$. Given a cup $\cp  \in \underline{\mu}$ we define its {\sf breadth}, $b(\cp)$, as follows:
 
If $\cp$ is undecorated, then $b(\cp)=\frac{1}{2}(r_\cp-l_\cp+1)$.
  
  If $\cp$ is decorated, then $b(\cp)=\frac{1}{2}(r_\cp+l_\cp)$
 \end{defn}

\begin{defn}
\label{degrees}
   Let $\lambda, \mu$ be weights such that $\underline{\mu}\lambda$ is oriented. We define the {\sf degree} of the diagram $\underline{\mu}\lambda$ 
   to be the number of cups whose right vertex is labelled by $\down$ 
in the diagram, that is
\[\deg(\underline{\mu}\lambda)=
		\sharp\left\{  \begin{minipage}{1.2cm}

		\end{minipage}
		\right\}.	 \]
We also refer to these cups, whose right vertex is labelled by $\down$, as being {\sf clockwise oriented}. Similarly, we refer to cups, whose right vertex is labelled by $\up$, as being {\sf anti-clockwise oriented}. In this sense, we can think of a decoration on a cup as an orientation-reversing point.
 \end{defn}

 \begin{figure}[ht!]
 $$
 %
$$
\caption{
  The degree of $\underline{\mu}\la$ on the right is two, which is precisely the number of (highlighted) flipped cups from $\underline{\mu}\mu$ on the left.
  }
\label{flipit}
\end{figure}

For the purposes of this paper, for $p\geq 0$, we define the $p$-Kazhdan--Lusztig polynomials of type $(W,P) = (D_n, ,A_{n-1})$ as follows.  
For $\la,  \mu \in \mptn $ we set
$$
{^p}n_{\la,\mu}= 
\begin{cases}
q^{\deg(\underline{\mu} \la)}		&\text{if $ \underline{\mu} \la $ is oriented}\\
0						&\text{otherwise.}
\end{cases}
$$
We refer to \cite[Theorem 7.3]{bowman2023orientedtemperleyliebalgebrascombinatorial} 
and \cite[Theorem A]{ChrisHSP} 
for a justification of this definition and to \cite{MR2813567,MR2918294} 
for the origins of this combinatorics.

\begin{rmk}
\label{flipper}
We say we {\sf flip} a cup if we reflect both the vertices of the cup through the horizontal axis so as to change the orientation of the cup from anti-clockwise to clockwise or vice versa.
 It is clear that for a fixed $\mu\in \mptn $, the diagram $\underline{\mu}\lambda$ is oriented of degree $d$ if and only if the weight $\lambda$ is obtained from the weight $\mu$ by flipping $d$ anti-clockwise cups in $\underline{\mu}$. There are precisely 2 ways of doing this: either flipping undecorated or decorated anti-clockwise cups. Pictorially, this looks like
$$
(i)\begin{minipage}{1.2cm} 
\end{minipage}.
$$
 Moreover, the degree of $\underline{\mu}\la$ is precisely the total number of such swapped pairs. 
See \cref{flipit} for an example of a cup diagram of degree two.
\end{rmk}

Let $\mu,\lambda \in \mptn$ be such that $\la$ is obtained from $\mu$ by flipping a single cup $\cp \in \underline{\mu}$. Then we say $ \lambda=\mu - \color{cyan}{p}$. An example is below in \cref{removal}. We also write $\mu = \la +\cp$.
 
  \begin{figure}[ht!]
 $$
 %
$$
\caption{
  The flipping of the vertices of the cup $\cp$ from $\underline{\mu}$ creates the weight $\la$. Hence we say $\la=\mu- \cp$, where we have drawn $\underline{\la}$ using \cref{drawcups}
  }
\label{removal}
\end{figure}

      \subsection{ Cup combinatorics}
   \label{Cup combinatorics}
We now set up the meta--Kazhdan--Lusztig combinatorics of these oriented cup diagrams that govern the relations of $\mathcal{H}_{(D_n,A_{n-1})}$.

\label{dyckgens}

 \begin{defn} 
\label{non-com:def} 
 Let $\mu \in \mptn$ and let $\cp,\mq \in \underline{\mu}$.
 We say that $\cp$ {\sf covers} $\mq$ and we write $\mq$ $\prec \cp$ if and only if 
    $l_{\cp} < l_{\mq}$ and $r_{\cp} > r_{\mq}$. See \cref{covering1,covering2} for examples.
    \end{defn}

    \begin{figure}[ht!]
  $$   
$$
		 \caption{Examples of commuting pairs $\cp$ and $\mq$ such that $\mq \prec \cp$.}
 \label{covering1}
\end{figure}  
\begin{figure}[ht!]
$$
 %
$$
 \caption{Examples of non-commuting pairs $\cp$ and $\mq$ such that $\mq \prec \cp$.}
 \label{covering2}
\end{figure}

    \begin{defn}
\label{doublenon-com:def} 
 Let $\mu \in \mptn$ and let $\cp,\mq \in \underline{\mu}$.
 We say that $\cp$ {\sf doubly covers} $\mq$ and write $\mq \prec \prec \cp$ if and only if $\color{cyan}q$ is decorated and $ r_\mq< l_\cp$.  See \cref{doublecover2,doublecover1} for examples.
\end{defn}

    \begin{figure}[ht!]
     $$   %
	$$
		 \caption{Examples of commuting pairs $\cp$ and $\mq$ such that $\mq \prec \prec \cp$.}
 \label{doublecover1}
\end{figure}  
\begin{figure}[ht!]

	$$  %
$$ 
    
    \caption{Examples of non-commuting pairs $\cp$ and $\mq$ such that 
    $\mq \prec \prec \cp$.}
    \label{doublecover2}
    \end{figure}

\color{black}
Next, we present a rather cumbersome definition of commuting cups, but for the intuition, one should simply skip straight to \cref{commie2}.
       \begin{defn}\label{dist:def}
  Let $\mu\in\mptn$ and $\cp, \mq \in \underline{\mu}$ and suppose without loss of generality that $l_{\mq} < l_\cp$. 
  We say that $\color{magenta}p$ and $\color{cyan}q$  commute if 
  either
$(i)$ there exists $\gr $ such that 
$\mq \prec 	\gr		\prec \cp$,
   $(ii)$ there exists $\gr $ such that 
$\mq \prec 	\gr		\prec\prec \cp$ 
or 
$\mq \prec\prec 	\gr		\prec\prec \cp$ 
or $(iii)$ neither  $\color{magenta}p$  nor     $\color{cyan}q$ (doubly) covers the other.  
  \end{defn}  
  
     Above 
 \cref{covering1,doublecover1} consist of commuting pairs of cups 
 and  \cref{covering2,doublecover2} consist of (doubly) non-commuting pairs of cups.
  
\begin{rmk}
\label{commie2}
Let $\mu\in\mptn$ and $\cp,\mq\in \underline{\mu}$. Then $\cp$ and $\mq$ commute if and only if $\cp \in \underline{\mu-\mq}$ and $\mq \in \underline{\mu-\cp}$. This will follow naturally from the following lemmas and \cref{general}.
\end{rmk}

   \begin{defn} \label{adj:def2}
   Let $\mu, \la \in \mptn$ be such that $\la=\mu-\cp$. Let $\cp$ $\in \underline{\mu}$ and $\ot\in \underline{\la}$.
 We say that $\cp$ and $\ot$ are {\sf adjacent} if $\ot$ has precisely one node in common with $\cp$.
\end{defn}
Clearly, a cup $\cp\in \underline{\mu}$ can be adjacent to zero, one or two cups in $\underline{\la}$. We will list a complete set of cases below, which follows immediately from \cref{adj:def2}, but can also be deduced from \cite[Lemma 3.9]{TypeDKhov}.

\begin{lem} \label{adj lem0}
Let $\la,\mu\in \mptn$ be such that $\la=\mu-\cp$. Suppose $\cp$ is not (doubly) covered in $\underline{\mu}$. Moreover, if there is no undecorated strand to the left of $\cp$ and no decorated strand to the right of $\cp$, then $\cp$ is adjacent to zero cups in $\underline{\la}$.  The following is a complete list of the pairs for $\cp\in \underline{\mu}$ and $\underline{\la}$ that can be pictured on two vertices:

    $$ a) \Biggl( 
    \begin{minipage}{1.4cm}
\end{minipage}\Biggr)$$

 \end{lem}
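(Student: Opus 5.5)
The plan is to work directly with the weight diagrams, using \cref{flipper}: $\la$ is obtained from $\mu$ by reflecting the two endpoints of $\cp$. Then I rerun the cup algorithm of \cref{drawcups} on $\la$ and check that no cup of $\underline{\la}$ has exactly one endpoint in $\{l_\cp, r_\cp\}$.

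\textbf{Step 1: isolate the region under $\cp$.} Since $\cp$ is not (doubly) covered in $\underline{\mu}$, every strand of $\underline{\mu}$ is of one of three kinds: nested inside $\cp$, entirely to the left of $\cp$, or entirely to the right of $\cp$. The cups nested inside $\cp$ are undecorated and balanced. Their vertices form a closed $\down\up$-matched segment, and flipping the ends of $\cp$ does not change it. So in $\underline{\la}$ they are reproduced verbatim at step (i) of \cref{drawcups}, because that step only matches neighbouring $\down\up$ pairs. This leaves the endpoints of $\cp$ to be analysed as though they were adjacent vertices. That justifies the phrase ``pictured on two vertices'' and reduces the problem to the two local pictures:
\begin{itemize}
\item undecorated: $\down\up \mapsto \up\down$;
\item decorated: $\up\up \mapsto \down\down$.
\end{itemize}

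\textbf{Step 2: case analysis on what lies outside $\cp$.} The hypotheses say that to the left of $\cp$ there are only closed cups or decorated strands, and to the right only closed cups or undecorated rays. I then check each local picture in turn.

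In the undecorated case, the new $\up$ at $l_\cp$ can only be joined, by an undecorated cup, to a $\down$ on its left that is free after step (i). It can also only be joined, by a decorated cup, to a free $\up$ on its left after step (iii).
\begin{itemize}
\item A free $\down$ on the left would be an undecorated ray (or endpoint of an undecorated strand) to the left of $\cp$, which is excluded.
\item A free $\up$ on the left would have paired with something in $\underline{\mu}$ and made $\cp$ (doubly) covered, or it would be a decorated strand, which is excluded. Because the decorated ray is unique, I would argue that it cannot sit left of $\cp$ without $\cp$ being doubly covered.
\end{itemize}
Symmetrically, the new $\down$ at $r_\cp$ can only pair with a free $\up$ to its right, which would be a decorated strand to the right, again excluded. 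Hence $l_\cp$ and $r_\cp$ either pair with each other or become rays, and in neither case is there a cup sharing exactly one node with $\cp$.

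The decorated case runs in the same way: the two new $\down$'s can only pair with $\up$'s to their right, and those are excluded by the hypotheses. Finally, I will list the resulting pairs $(\cp,\underline{\la})$ to confirm that the list in the statement is complete, cross-checking against \cite[Lemma 3.9]{TypeDKhov}.

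\textbf{Main obstacle.} The delicate point is step (iii) of \cref{drawcups}, which pairs $\up\up$ from the left and is therefore a global greedy choice. A parity shift could in principle propagate a change in the decorated pairings far from $\cp$. I expect to control this by counting: the number of free $\up$'s to the left of $l_\cp$ is unchanged by the flip, so the greedy left-to-right pairing agrees with that of $\underline{\mu}$ up to $l_\cp$.
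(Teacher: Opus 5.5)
There is a genuine gap in the undecorated case, at your second bullet. The hypotheses forbid undecorated strands to the left of $p$ and decorated strands to the right, but they do not forbid a decorated ray to the left of $p$. You say as much at the start of Step 2, so ``it would be a decorated strand, which is excluded'' is not justified.

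Your fallback, that the decorated ray cannot lie left of $p$ unless $p$ is doubly covered, is false. Double covering of $p$ comes from a decorated cup to the right of $p$, never from a ray to its left. The same goes for a free $\wedge$ on the left pairing with another free $\wedge$ on the left: that gives a decorated cup to the left of $p$, which does not (doubly) cover an undecorated $p$.

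Here is a concrete example. Take $n=3$ and $\mu=\wedge\vee\wedge$. Then $\underline{\mu}$ consists of a decorated ray at $\tfrac12$ and the single undecorated cup $p=(\tfrac32,\tfrac52)$. Since this is the only cup, $p$ is neither covered nor doubly covered. There is no undecorated strand to its left and nothing to its right. But $\lambda=\mu-p=\wedge\wedge\vee$, and step (iii) of the cup algorithm joins $\tfrac12,\tfrac32$ by a decorated cup, which shares exactly the node $\tfrac32$ with $p$. So in this configuration the stated conclusion itself fails, and no argument can close the gap while the hypotheses stand as written.

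Your ``main obstacle'' paragraph has the right mechanism but stops one step short. Let $m$ be the number of $\wedge$'s left unmatched after step (i) to the left of $l_p$. It is indeed unchanged by the flip, but what decides the outcome is the parity of $m$.
\begin{itemize}
\item Under your other hypotheses, all free $\wedge$'s lie to the left of $p$. So $m$ is odd exactly when $\underline{\mu}$ has its decorated ray to the left of $p$. In that case the new $\wedge$ at $l_p$ is the $(m+1)$-st and is paired with that ray's vertex, producing an adjacent decorated cup. This is really one of the three-vertex cases of \cref{adj lem1}.
\item If $m$ is even, the new $\wedge$ at $l_p$ is left over and becomes the decorated ray, and $r_p$ becomes an undecorated ray. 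This gives zero adjacency, as claimed.
\end{itemize}
So you need the extra hypothesis that $\underline{\mu}$ has no decorated ray to the left of $p$. For decorated $p$ this is automatic, and your decorated case is fine as written.

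The paper itself offers only that the list is immediate from the definition of adjacency or can be deduced from \cite[Lemma 3.9]{TypeDKhov}. A direct check like yours is therefore the natural route, but it has to be run with this parity condition made explicit.
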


\begin{lem} \label{adj lem1}
Let $\la,\mu\in \mptn$ be such that $\la=\mu-\cp$. If $\cp$ is not (doubly) covered, then $\cp$ can be adjacent to at most one cup $\ot$. The following is a complete list of the adjacent pairs $\cp\in \underline{\mu}$ and $\ot \in \underline{\la}$ that can be pictured on three vertices:
    $$ a)  \Biggl( 
    \begin{minipage}{1.9cm} %
  \end{minipage}\Biggr) 
	 $$

 \end{lem}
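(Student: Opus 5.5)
The statement is Lemma~\ref{adj lem1}: when $\cp$ is not (doubly) covered, it is adjacent to at most one cup $\ot\in\underline{\la}$, and the adjacent pairs can be listed on three vertices. I would argue directly from the cup algorithm of \cref{drawcups}, comparing $\underline{\mu}$ with $\underline{\la}$ where $\la=\mu-\cp$.

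The first step is to determine exactly how $\underline{\la}$ differs from $\underline{\mu}$. Since $\cp$ is not (doubly) covered, every strand of $\underline{\mu}$ nested inside $\cp$ is a complete cup. Such a cup only uses labels strictly between $l_\cp$ and $r_\cp$, and these labels are unchanged by the flip. So in the run of \cref{drawcups} on $\la$, all cups nested under $\cp$ (and all cups entirely left or right of $\cp$ that do not interact with its endpoints) reappear unchanged. The only vertices whose pairing can change are therefore:
\begin{itemize}
\item the two endpoints of $\cp$, whose labels become $\up\down$ (if $\cp$ is undecorated) or $\down\down$ (if $\cp$ is decorated);
\item the nearest unmatched vertex or strand endpoint immediately to the left of $l_\cp$ and to the right of $r_\cp$.
\end{itemize}
Being uncovered guarantees that no cup of $\underline{\mu}$ straddles $\cp$, so these neighbours are rays or endpoints of decorated cups.

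The second step is a case split on the type of $\cp$ and its neighbours. If $\cp$ is undecorated, its flipped endpoints are $\up\down$. The new $\up$ at $l_\cp$ can pair with an unmatched $\down$ to its left (an undecorated ray in $\underline{\mu}$) to form an undecorated cup. Alternatively, the new $\down$ at $r_\cp$ can pair with an $\up$ to its right. Because $\cp$ is uncovered and rays lie outside, the $\up$ side and the $\down$ side cannot both find partners: an undecorated ray on the left forces, via orientation and the evenness of $\up$'s, that the right side is a $\down$-ray. Hence at most one adjacent cup is formed. The decorated cases use steps (iii)–(iv) of \cref{drawcups}: the $\up\up$ pairing proceeds from the left, so the new labels can join at most one leftover $\up$, namely a decorated ray or the leftmost $\up$ of an uncovered decorated cup. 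In each case the resulting picture restricts to three vertices, namely $l_\cp$, $r_\cp$ and the partner vertex, with everything nested between them collapsed away.

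The main obstacle is exhaustiveness: excluding configurations where both endpoints become adjacent to distinct cups. I would handle this with parity. The number of $\up$'s is even, and at most one $\up$ ray occurs. Combined with the left-to-right greedy order of step (iii), this rules out a second adjacency. Lemma~\ref{adj lem0} covers the case where there is no undecorated strand to the left and no decorated strand to the right. Finally, I would compare the resulting list with \cite[Lemma 3.9]{TypeDKhov} as a check.
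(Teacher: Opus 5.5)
Your plan of rerunning \cref{drawcups} on $\la=\mu-\cp$ and tracking which pairings change is the right one. It is also all the paper itself offers: it says the list follows directly from the definitions, or from \cite[Lemma 3.9]{TypeDKhov}. There is, however, a genuine gap in your argument for the key claim that there is at most one adjacent cup.

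\textbf{The gap.} Your argument never uses the hypothesis that $\cp$ is not doubly covered, and without it the claim is false. In the intended sense, that hypothesis means no decorated cup of $\underline{\mu}$ lies entirely to the right of $\cp$.

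\begin{itemize}
\item In your decorated case you explicitly allow the new $\down$ at $r_\cp$ to pair with ``the leftmost $\up$ of an uncovered decorated cup''. Such a cup lies to the right of $\cp$, which is precisely the doubly covered situation, and there \emph{both} endpoints of $\cp$ acquire partners. For example, take $\mu=\up\up\up\up$ and $\cp=(\frac12,\frac32)$. Then $\la=\down\down\up\up$, and $\underline{\la}$ contains the nested cups $(\frac32,\frac52)\prec(\frac12,\frac72)$, both adjacent to $\cp$. This is the doubly covered case of \cref{adj lem2}.
\item In your undecorated case you only let the new $\up$ at $l_\cp$ pair with a $\down$ on its left. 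You overlook that it can instead enter a decorated cup in step (iii). For $\mu=\down\up\up\up\down\up$ and $\cp=(\frac12,\frac32)$, the diagram $\underline{\la}$ contains both $(\frac32,\frac52)$ and the decorated cup $(\frac12,\frac72)$.
\item The facts you invoke all hold equally in these configurations: the evenness of the number of $\up$'s, ``at most one decorated ray'', and the left-to-right order of step (iii). So your proposed exhaustiveness argument cannot separate them from the cases the lemma is about.
\item For the same reason your Step 1 locality claim is unjustified. Step (iii) pairs $\up$'s globally from the left, so creating or destroying an unmatched $\up$ at $l_\cp$ re-pairs every decorated cup to its right. Locality holds here only because there are no such cups.
\end{itemize}

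\textbf{The missing step.} You need to turn the two hypotheses into a precise description of the top level of $\underline{\mu}$.
\begin{itemize}
\item After steps (i)--(ii), the unmatched vertices read $\up\cdots\up\down\cdots\down$; otherwise a $\down\up$ pair would remain. Step (iii) then pairs these unmatched $\up$'s consecutively from the left.
\item Not covered forces an even number of unmatched $\up$'s to the left of $\cp$; otherwise a decorated cup straddles $\cp$.
\item Not doubly covered then forces at most one unmatched $\up$ to the right of $\cp$, namely a decorated ray.
\item By the shape $\up\cdots\up\down\cdots\down$, an undecorated ray to the left of $\cp$ and a decorated ray to its right cannot coexist. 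This, not the evenness of the $\up$'s, is the correct reason in your undecorated case.
\end{itemize}

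With this description, rerunning the algorithm is immediate:
\begin{itemize}
\item If $\cp$ is undecorated and there is an undecorated ray on the left, the nearest such ray closes a cup $\ot$ with $r_\ot=l_\cp$.
\item If $\cp$ is undecorated and there is a decorated ray on the right, that ray closes a cup $\ot$ with $l_\ot=r_\cp$, and the new $\up$ at $l_\cp$ becomes the decorated ray.
\item If $\cp$ is decorated and there is a decorated ray on the right, that ray closes $\ot$ with $l_\ot=r_\cp$, and $l_\cp$ becomes an undecorated ray.
\item In every other situation no adjacency arises, which is \cref{adj lem0}.
\end{itemize}
All remaining cups, including the decorated cups to the left of $\cp$, are unchanged. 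This gives both the bound of at most one adjacent cup and the complete three-vertex list.
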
          
 
\begin{lem} \label{adj lem2}
Let $\la,\mu\in \mptn$ be such that $\la=\mu-\cp$. If $\cp$ is (doubly) covered in $\underline{\mu}$, then $\cp$ is adjacent to two cups, $\gr, \ot \in \underline{\la}$. The following is a complete list of the pairs $\cp\in \underline{\mu}$ and $\gr,\ot \in \underline{\la}$ that can be pictured on four vertices:
 \begin{itemize}[leftmargin=*] 
\item   Firstly, we consider the case where $\cp$ is covered by a non-commuting cup, $\vu\in \mu$ (we additionally say that $\cp$ is {\sf covered}):
  $$   
a) \Biggl( 
    \begin{minipage}{2.4cm}  
 \end{minipage}\Biggr).$$ Notice that the two cups $\gr, \ot$ on the right-hand side are commuting pairs and the total number of decorations is the same in $\underline{\mu}$ and $\underline{\la}$ in both cases $a)$ and $b)$.

\item Secondly, we consider the 
case that  $\cp$ is doubly covered by a non-commuting decorated cup, $\vu \in\underline{\mu}$  (we additionally say that $\cp$ is {\sf doubly covered}):

    $$  c)\Biggl( 
    \begin{minipage}{2.4cm} %
 \end{minipage}\Biggr).$$ 
	 Notice that the two cups $\gr, \ot$ on the right-hand side are non-commuting pairs with $\gr \prec \ot$.
\end{itemize}

 \end{lem}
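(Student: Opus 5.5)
The plan is to work entirely inside the cup-diagram algorithm of \cref{drawcups} applied to the weight $\la=\mu-\cp$, and to see how it behaves near the four vertices involved. Since $\cp$ is (doubly) covered, there is a cup $\vu\in\underline{\mu}$ with $\cp\prec\vu$ or $\cp\prec\prec\vu$ that does not commute with $\cp$. By \cref{dist:def}, non-commutation means that no intermediate cup $\gr$ separates $\cp$ from $\vu$. So, after contracting the cups strictly nested inside $\cp$ and those strictly between $\cp$ and $\vu$ (these are oriented consistently and are untouched by flipping $\cp$), the relevant picture lives on exactly four vertices: $l_\vu, l_\cp, r_\cp, r_\vu$ in the covered case, and $r_\vu<l_\cp<r_\cp$ together with the partner endpoint in the doubly covered case.

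The first step is to justify this contraction. I would show that a maximal run of already-paired vertices (a nested block of closed cups) between two labels is ignored by steps (i)--(iii) of \cref{drawcups}. The point is that the algorithm only ever pairs ``neighbours'' modulo cups formed earlier. Flipping the two endpoints of $\cp$ changes no label inside $\cp$, so every cup nested in $\cp$ is formed identically in $\underline{\la}$. The same argument applies to cups lying between $\cp$ and $\vu$.

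Next I enumerate the cases.

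\emph{Covered case.} Here $\vu$ is an undecorated or a decorated cup. The orientations force the pattern $\down\,\down\,\up\,\up$ (undecorated $\vu$) or $\up\,\down\,\up\,\up$-type patterns (decorated $\vu$). Flipping $\cp$ swaps its labels to $\up\,\down$, and I rerun the algorithm on the four labels. I expect it to produce two side-by-side cups $\gr,\ot$, each sharing exactly one endpoint with $\cp$, which gives cases a) and b). I will then check by hand that neither of $\gr,\ot$ covers the other (so they commute, by \cref{dist:def}(iii)) and that decorations are preserved: a decorated $\vu$ produces exactly one decorated cup among $\gr,\ot$.

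\emph{Doubly covered case.} Here $\vu$ is decorated and lies entirely left of $\cp$. The flipped labels then yield a decorated cup $\ot$ and a smaller cup $\gr$ with $r_\gr<l_\ot$, which gives case c). I will then verify that $\gr$ is doubly covered by $\ot$ with no separating cup, so that they fail to commute.

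The main obstacle is showing that the enumeration is complete and that $\vu$ remains the relevant partner after flipping. In particular, steps (iii)--(iv) pair $\up\up$ from the left, so a decorated cup can be ``stolen'' by a ray or by another decorated cup further left. I would handle this with the observation that the decorated cups and the single decorated ray form a left-to-right sequence of $\up$ pairs. Flipping $\cp$ introduces exactly one extra $\up$ at $l_\cp$ and removes one at $r_\cp$, which shifts the $\up\up$ pairing only locally between $\vu$ and $\cp$. Cross-checking the resulting cases against \cite[Lemma 3.9]{TypeDKhov} should confirm that the list is exhaustive.
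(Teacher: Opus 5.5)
Your covered case is correct. On $l_\vu<l_\cp<r_\cp<r_\vu$ with labels $\down\down\up\up$ or $\up\down\up\up$, flipping the (necessarily undecorated) $\cp$ and rerunning \cref{drawcups} gives the side-by-side cups $(l_\vu,l_\cp)$ and $(r_\cp,r_\vu)$, the left one decorated exactly when $\vu$ is. The paper itself offers no more argument than this: it only says the list follows directly from the definitions, or from \cite[Lemma 3.9]{TypeDKhov}.

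The doubly covered case is set up backwards, so the plan cannot produce the stated pictures.
\begin{itemize}
\item By \cref{doublenon-com:def}, $\cp\prec\prec\vu$ puts the decorated cup $\vu$ to the \emph{right} of $\cp$, with $r_\cp<l_\vu$. Compare the proof of \cref{selfdual1}, where $\cp$ is the leftmost cup and is doubly covered by the decorated cups to its right.
\item If $\vu$ is decorated with $r_\vu<l_\cp$, as you propose, flipping $\cp$ does not change the matching up to $r_\vu$. So $\vu$ reappears unchanged in $\underline{\la}$, shares no vertex with $\cp$, and commutes with $\cp$ (\cref{commie2}). Any two cups adjacent to $\cp$ must come from a decorated cup on its right.
\item Your intended conclusion also contradicts the statement. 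The lemma says $\gr\prec\ot$, i.e.\ $\gr$ is nested inside $\ot$; it does not say $r_\gr<l_\ot$ with $\gr$ doubly covered by $\ot$.
\item You omit the subcase where $\cp$ itself is decorated. There the flip turns $\up\up$ into $\down\down$, so your mechanism of one extra $\up$ at $l_\cp$ and one fewer at $r_\cp$ does not apply.
\end{itemize}

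The correct local picture lives on $l_\cp<r_\cp<l_\vu<r_\vu$, with labels $\down\up\up\up$ or $\up\up\up\up$.
\begin{itemize}
\item Non-commutation forces two things: no cup covers $\cp$, and the gap between $r_\cp$ and $l_\vu$ contains only closed undecorated cups. Otherwise some $\gr$ would give $\cp\prec\gr\prec\prec\vu$ or $\cp\prec\prec\gr\prec\prec\vu$.
\item Hence an even number of unmatched $\up$'s lies to the left of $\cp$. This parity is what rules out the ``stealing'' you worry about.
\item After the flip the labels are $\up\down\up\up$ or $\down\down\up\up$. Step (i) first joins $r_\cp$ to $l_\vu$, giving an undecorated $\gr=(r_\cp,l_\vu)$.
\item Then $l_\cp$ is joined to $r_\vu$. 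This happens by step (iii), as a decorated cup, if $\cp$ was undecorated. It happens by step (i), as an undecorated cup, if $\cp$ was decorated.
\end{itemize}
Thus $\ot=(l_\cp,r_\vu)$, and $\gr\prec\ot$ are concentric with nothing between them. This is exactly the non-commuting nested pair depicted in the statement.
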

 
In \cref{adj lem2} there is a (purple) cup $\vu\in\underline{\mu}$ which, due to the flipping of the vertices of $\cp$, no longer appears in $\underline{\nu}$ and shares one node each with $\gr$ and $\ot$. This is the smallest possible cup (doubly) covering $\cp$. As we will argue below, this cup is uniquely defined even for more general cases that need $n>4$ vertices to be pictured.
 
 \begin{defn} \label{gen def}
   Let $\mu, \la \in \mptn$ be such that $\la=\mu-\cp$. Let $\cp$ $\in \underline{\mu}$ and $\ot \in \underline{\la}$ be adjacent.
Suppose $\vu\in \underline{\mu}$, $\cp \neq \vu$ is the cup such that $l_\vu=l_\ot$ or $r_\vu=r_\ot$. Then we say that $\vu$ is the \textsf{cup generated by $\cp$ and $\ot$}, denoted $\langle {\color{cyan}p}\cup\ot\rangle_\mu$.
   \end{defn}

We should note that we are intentionally not defining $\langle {\color{cyan}p}\cup {\color{darkgreen}r}\rangle_\mu$ in the case where then $\cp$ is adjacent to a non-commuting pair $\gr, \ot$ of cups and $\gr \prec \ot$ - in this case $l_\vu$=$r_\gr$ or $r_\vu$=$l_\gr$. 
      
\begin{rmk}
\label{general}

The cases covered in \cref{adj lem2,adj lem1,adj lem0} are the ones pictured on the minimal number of vertices of $\underline{\mu}$. One can create equivalent cases on a larger number of vertices by adding (possibly many) cups that are (doubly) covered by $\vu$ and/or $\cp$. However, each of these cases reduces to one of the cases above. For example,
$$ 
 \Biggl( 
    \begin{minipage}{4.3cm}  
 \end{minipage}\Biggr).$$is equivalent to \cref{adj lem2} b) and 
	$$  
	 \Biggl( 
    \begin{minipage}{5.2cm}  %
 \end{minipage}\Biggr)$$is equivalent to \cref{adj lem2} c). One can similarly generalise the remaining cases in \cref{adj lem0,adj lem1,adj lem2}.

In \cref{KhovContract} we will explain a method that will allow us to move between oriented cup diagrams, Hecke categories and Khovanov arc diagrams of different sizes, 
which will allow us to restrict our focus as much as possible to the cases that correspond to the cup diagram with the minimal number of vertices, as pictured in \cref{adj lem0,adj lem1,adj lem2}. 
\end{rmk}

The final lemma in this section is a simple rephrasing of \cref{flipper}.
\begin{lem}
Let $\la,\mu, \nu \in \mptn$ and suppose $\underline{\mu}\la$ is oriented with $\la= \mu - \sum_{i=1}^{m}\cp^i$, $\cp^i \in \underline{\mu}$. Suppose for some $\ot \in \underline{\la}$ that $\nu= \la - \ot$, then  $\underline{\mu}\nu$ is oriented if and only if $\ot$ is not adjacent to any $\cp^i$.
\end{lem}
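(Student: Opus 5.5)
We reduce everything to \cref{flipper}. Since $\underline{\mu}\la$ is oriented with $\la=\mu-\sum_{i=1}^m\cp^i$, the weight $\la$ agrees with $\mu$ except at the endpoints of the anti-clockwise cups $\cp^1,\dots,\cp^m\in\underline{\mu}$, where the labels are flipped. The diagram $\underline{\mu}\nu$ is then oriented if and only if $\nu$ is obtained from $\mu$ by flipping some set of anti-clockwise cups of $\underline{\mu}$. The weight $\nu=\la-\ot$ differs from $\la$ exactly at the two endpoints of $\ot$, where $\ot$ is anti-clockwise in $\underline{\la}\la$. So the proof amounts to checking, vertex by vertex, whether the set of positions where $\nu$ differs from $\mu$ is a union of endpoint-pairs of cups of $\underline{\mu}$, with the correct orientations.

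\emph{Step 1 ($\ot$ not adjacent to any $\cp^i$ gives orientedness).} The endpoints of $\ot$ are not shared with any $\cp^i$, because $\ot$ is not adjacent to any $\cp^i$, and $\ot$ cannot equal some $\cp^i$, since $\cp^i$ is clockwise in $\underline{\mu}\la$ while $\ot$ is anti-clockwise in $\underline{\la}\la$. Hence the endpoints of $\ot$ carry the same labels in $\la$ as in $\mu$. I then claim that $\ot$ is already a cup of $\underline{\mu}$. The idea is that the cups of $\underline{\la}$ which differ from those of $\underline{\mu}$ are exactly the ones created by flipping the $\cp^i$. By \cref{adj lem0,adj lem1,adj lem2}, together with their generalisations in \cref{general}, these created cups are precisely the cups adjacent to some $\cp^i$. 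To make this rigorous, I would flip the $\cp^i$ one at a time, from innermost to outermost, and apply the lemmas at each stage. Once $\ot\in\underline{\mu}$ is established, $\ot$ is anti-clockwise in $\underline{\mu}\mu$ because its endpoint labels are unchanged. Then $\nu=\mu-\sum_i\cp^i-\ot$ is a flip of distinct anti-clockwise cups, so $\underline{\mu}\nu$ is oriented by \cref{flipper}, and it has degree $m+1$.

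\emph{Step 2 ($\ot$ adjacent to some $\cp^i$ gives non-orientedness).} Suppose $\ot$ shares exactly one node $x$ with $\cp^i$, and let $y$ be the other endpoint of $\ot$. Then $\nu$ agrees with $\mu$ at $x$, since $x$ is flipped twice, and differs from $\mu$ at the other endpoint of $\cp^i$ and at $y$. For $\underline{\mu}\nu$ to be oriented, the positions where $\nu$ and $\mu$ differ must form a union of cups of $\underline{\mu}$. The other endpoint of $\cp^i$ is paired only with $x$ in $\underline{\mu}$, and $\nu$ does not differ from $\mu$ at $x$, so this fails. Hence $\underline{\mu}\nu$ is not oriented.

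The main obstacle is the claim in Step 1: that every cup of $\underline{\la}$ which is not a cup of $\underline{\mu}$ is adjacent to some $\cp^i$. This needs the local case analysis of \cref{adj lem0,adj lem1,adj lem2}, including the decorated and doubly-covered cases and rays, together with an induction on $m$ that controls how the successive flips interact.
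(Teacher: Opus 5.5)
Your reduction to \cref{flipper} matches the paper, which gives no argument beyond calling the lemma a rephrasing of that remark. Step 2 is correct. One harmless slip: $\nu$ need not differ from $\mu$ at $y$, since $y$ may be an endpoint of another $\cp^j$. But you only use the other endpoint $z$ of $\cp^i$, where $\nu_z\neq\mu_z$ while $\nu_x=\mu_x$, and that already rules out orientedness.

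The gap is in Step 1. The ``if'' direction is exactly the claim that a cup of $\underline{\la}$ whose endpoints avoid those of every $\cp^i$ is already a cup of $\underline{\mu}$, and you leave it as a plan. The plan as stated also fails. If you flip from innermost to outermost, an outer $\cp^j$ that does not commute with an inner $\cp^i$ need not survive the flip of $\cp^i$ (cf.\ \cref{commie2}). Your next ``flip'' is then not the flip of a cup of the intermediate diagram, so \cref{adj lem0,adj lem1,adj lem2} no longer apply. For example, take $\mu=\down\down\up\up$ with $\cp^1=(\tfrac{3}{2},\tfrac{5}{2})$ and $\cp^2=(\tfrac{1}{2},\tfrac{7}{2})$. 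Then $\underline{\mu-\cp^1}$ has cups $(\tfrac{1}{2},\tfrac{3}{2})$ and $(\tfrac{5}{2},\tfrac{7}{2})$, and $\cp^2$ has disappeared.

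The induction works if you reverse the order.
\begin{itemize}
\item Covering and double covering both strictly increase the right endpoint. So choose $\cp^1$ with $r_{\cp^1}$ maximal among the $\cp^i$; then no other $\cp^i$ (doubly) covers it.
\item By \cref{adj lem0,adj lem1,adj lem2} and \cref{general}, flipping $\cp^1$ changes $\underline{\mu}$ in only three ways: it deletes the smallest cup $\vu$ (doubly) covering $\cp^1$, if there is one; it creates cups adjacent to $\cp^1$; and it changes rays.
\item Since $r_\vu>r_{\cp^1}$, the cup $\vu$ is none of $\cp^2,\dots,\cp^m$. These therefore remain anti-clockwise cups of $\underline{\mu-\cp^1}$, because their endpoint labels are untouched.
\item Now induct on $m$, applied to $\mu-\cp^1$ and $\la=(\mu-\cp^1)-\sum_{i\geq 2}\cp^i$. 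A cup of $\underline{\la}$ not in $\underline{\mu-\cp^1}$ shares a node with some $\cp^i$ with $i\geq 2$. A cup of $\underline{\mu-\cp^1}$ not in $\underline{\mu}$ shares a node with $\cp^1$.
\end{itemize}
This proves your claim. The rest of Step 1 is fine as written; note only that the decoration of $\ot$ is the same in $\underline{\la}$ and $\underline{\mu}$, since it is determined by the unchanged endpoint labels.
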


 \subsection{The presentation of $\mathcal{H}_{(D_n, A_{n-1})}$}
 
 For a general maximal parabolic, the algebra $\mathcal{H}_{(W,P)}$ originates from the groundbreaking framework of Soergel bimodules and their diagrammatic categorification of the Hecke algebra \cite{Elias_2010}. Within this categorical setting, one frequently leverages the Libedinsky–Williamson construction of the light leaves basis to perform explicit calculations \cite{MR3555156,antiLW,ChrisHSP}.

We now introduce the first of the two algebras in this paper: the basic algebra of the anti-spherical Hecke category associated to the maximal parabolic $(W,P)=(W(D_n),W(A_{n-1}))$, and also recall key properties about its structure. 
For a general maximal parabolic, $\mathcal{H}_{(W,P)}$ was developed in the framework in Soergel bidmoules and their diagrammatic categorification of the Hecke algebra \cite{Soe07,Elias_2010}. Within this setting, one will often use Libedinsky-Williamson's construction of the light leaves basis for explicit calculations \cite{MR3555156,antiLW,ChrisHSP}.
Instead of that approach, we make use of the main result of \cite{My1}, which established a quadratic presentation of $\mathcal{H}_{(D_n, A_{n-1})}$ defined entirely in terms of the oriented cup combinatorics outlined in \cref{Cup combinatorics}, based on the work in \cite{bowman2023orientedtemperleyliebalgebrascombinatorial,ChrisDyckPaper}. In \cref{TheActualIso} we will make use of this presentation to construct an isomorphism to the generalised Khovanov arc algebra. For the purposes of this paper we define  $\mathcal{H}_{(D_n, A_{n-1})}$ as follows:
 
 \begin{thm}\cite[Theorem 5.9]{My1}
 \label{presentation}
The algebra $\mathcal{H}_{(D_n, A_{n-1}) }$ is the associative $\Bbbk$-algebra generated by the elements 
\begin{equation}\label{geners}
\{D^\la_\mu,
D_\la^\mu \mid 	
\text{$\la, \mu\in \mptn $ with $\la = \mu - {\color{cyan}p}$ for some ${\color{cyan}p}\in \underline{\mu}$} 
	\}\cup\{ {\sf 1}_\mu \mid \mu \in \mptn  \}		
	\end{equation}
	subject to the following relations and their duals. 

	\smallskip\noindent
{\bf The idempotent   
relations:} 
For all $\la,\mu \in \mptn $, we have that 
\begin{equation}\label{rel1}
{\sf 1}_\mu{\sf 1}_\la =\delta_{\la,\mu}{\sf 1}_\la \qquad 
\qquad {\sf 1}_\la D^\la_\mu {\sf 1}_\mu = D^\la_\mu.
\end{equation} 

\smallskip\noindent
{\bf The commuting relations:} 
Let ${\color{cyan}p},{\color{magenta}q}\in \underline{\mu}$ which commute. Then we have 
\begin{equation}\label{commuting}
D^{\mu -{\color{cyan}p}-{\color{magenta}q}}_{\mu-{\color{cyan}p}}D^{\mu-{\color{cyan}p}}_\mu = D^{\mu-{\color{cyan}p}-{\color{magenta}q}}_{\mu -{\color{magenta}q}}D^{\mu -{\color{magenta}q}}_\mu \qquad
D^{\mu -{\color{cyan}p}}_\mu D^\mu_{\mu - {\color{magenta}q}} = D^{\mu - {\color{cyan}p}}_{\mu - {\color{cyan}p} - {\color{magenta}q}}D^{\mu - {\color{cyan}p} - {\color{magenta}q}}_{\mu - {\color{magenta}q}}.
\end{equation}

\smallskip\noindent
{\bf The non-commuting relation:}  
Let $\cp,\mq \in \underline{\mu}$ with $\mq \prec \cp $ be non-commuting. 
Then $\mq$ is adjacent to a pair of commuting non-concentric cups, which 
we label by  ${\color{darkgreen}q}^1$ and ${\color{orange}q}^2$ from left to right in $\underline{\mu - \mq}$ (with ${\color{darkgreen}q}^1$ possibly decorated). 
Then we have:
\begin{equation}\label{noncommutingcup1}
D^{\mu -\cp}_\mu D^\mu_{\mu-\mq} = 
 D^{\mu - \cp}_{\mu - \cp - {\color{darkgreen}q}^1}D^{\mu - \cp - {\color{darkgreen}q}^1}_{\mu - \mq} =
 D^{\mu - \cp}_{\mu - \cp - {\color{orange}q}^2}D^{\mu - \cp - {\color{orange}q}^2}_{\mu - \mq}.
\end{equation}

\smallskip\noindent
{\bf The doubly non-commuting relation:}  
Let $\cp,\mq \in \underline{\mu}$ with $\mq \prec \prec \cp $ be doubly non-commuting. 
Then $\mq$ is adjacent to a pair of concentric non-commuting cups, 
 ${\color{darkgreen}q}^1$ and ${\color{orange}q}^2$ with ${\color{darkgreen}q}^1  \prec {\color{orange}q}^2$ in $\underline{\mu - \mq}$ (with ${\color{orange}q}^2$ possibly decorated).
Then we have: 
\begin{equation}\label{noncommutingcup2}
D^{\mu -\cp}_\mu D^\mu_{\mu-\mq} = D^{\mu -\cp }_{\mu - \cp - {\color{orange}q}^2}D^{\mu -\cp- {\color{orange}q}^2}_{\mu -\mq}   
\end{equation}

\smallskip\noindent
	{\bf The 
	self-dual relation: } 
	Let  ${\color{cyan}p}\in \underline{\mu}$ and $\la = \mu - {\color{cyan}p}$. If $\cp$ is doubly covered and hence the pair $(\ot, \gr)$ adjacent to $\cp$ is non-commuting with $\gr \prec \ot$, then we have
	\begin{equation}
D_\mu^{\la} D_{\la}^\mu
= (-1)^{b({\color{cyan}p})-1}\Bigg(
2
\!\! \sum_{   \begin{subarray}{c} \mq \in (\underline{\mu} \cap \underline{\la}) \\ \cp \prec\mq,\cp \prec\prec\mq \end{subarray}}
\!\!
(-1) ^{b({\color{magenta}q}) } D^{\la}_{\la- {\color{magenta}q} } D^{\la - {\color{magenta}q}  }_{\la} + 
2  (-1)^{b(\ot)} D^{\la}_{\la- \ot } D^{\la -\ot }_{\la} + 
 (-1)^{b(\gr)} D^{\la}_{\la- \gr } D^{\la - \gr }_{\la}\Bigg)
 \label{selfdualrelD}
\end{equation}
and otherwise we have 
\begin{equation}
D_\mu^{\la} D_{\la}^\mu
= (-1)^{b({\color{cyan}p})-1}\Bigg(
2
\!\! \sum_{   \begin{subarray}{c} \mq \in (\underline{\mu} \cap \underline{\la}) \\ \cp \prec\mq,\cp \prec\prec\mq \end{subarray}}
\!\!
(-1) ^{b({\color{magenta}q}) } D^{\la}_{\la- {\color{magenta}q} } D^{\la - {\color{magenta}q}  }_{\la} + 
\!\!\sum_{  \begin{subarray}{c} \gr \in \underline{\la} \\ \gr \, \text{adj.}\, {\color{cyan}p} \end{subarray}}
\!\!
 (-1)^{b(\gr)} D^{\la}_{\la- \gr} D^{\la - \gr }_{\la}\Bigg), 
 \label{selfdualrelA}
\end{equation}  
where throughout we refer to the set $\underline{\mu} \cap \underline{\la}$ to be the cups in $\underline{\mu}$ that commute with $\cp$ (and hence are in $\underline{\la}$ also) and abbreviate ``adjacent to" simply as ``adj."

\smallskip\noindent
{\bf The adjacent relation}  
Given $\la=\mu-\cp$,  suppose that  ${\color{cyan}p}\in \underline{\mu}$ and $\ot\in \underline{\la}$  are  adjacent.
Then we have:
\begin{equation}\label{adjacentcup}
D^{\mu - {\color{cyan}p} - \ot}_{\mu - {\color{cyan}p}}D^{\mu - {\color{cyan}p}}_\mu = 
\left\{ \begin{array}{ll} (-1)^{b(\langle {\color{cyan}p}\cup \ot\rangle_\mu) - b(\ot)} 
D^{\mu - {\color{cyan}p} - \ot}_{\mu - \langle {\color{cyan}p}\cup \ot\rangle_\mu}
D^{\mu - \langle {\color{cyan}p}\cup \ot\rangle_\mu}_\mu &
 \mbox{if $\langle {\color{cyan}p}\cup \ot\rangle_\mu$ exists}
  \\ 0 
& \mbox{otherwise} \end{array} \right.
\end{equation}
 
 \end{thm}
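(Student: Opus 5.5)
This theorem is quoted from \cite[Theorem 5.9]{My1}, and for the present paper it serves as the definition of $\mathcal{H}_{(D_n,A_{n-1})}$. Still, here is how I would establish it from the diagrammatic Hecke category. The standard approach has three parts: choose candidate generators inside the basic algebra $\bigoplus_{\la,\mu}{\sf 1}_\la\mathcal{H}{\sf 1}_\mu$, check that the listed relations hold there, and then show the abstract algebra defined by the relations is no larger than $\mathcal{H}$ by comparing graded dimensions.

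\textbf{Step 1: generators.} For $\la=\mu-\cp$, define $D^\la_\mu$ to be the light leaves element of degree one associated to the oriented diagram $\underline{\mu}\la$; this diagram has degree one by \cref{flipper}. Define $D_\la^\mu$ as its dual under the anti-involution. Using the bijection between oriented diagrams $\underline{\mu}\la$ and light leaves \cite{antiLW,ChrisHSP}, I would show that the degree-one light leaves, together with the idempotents, generate the algebra. The argument is an induction on degree: a light leaf of degree $d$, indexed by flipping cups $\cp^1,\dots,\cp^d$, factors as a product of degree-one generators, up to lower terms in the cellular order.

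\textbf{Step 2: relations hold.} Each relation is local. By the contraction method of \cref{KhovContract} and \cref{general}, each one reduces to the minimal-vertex configurations listed in \cref{adj lem0,adj lem1,adj lem2}.
\begin{itemize}
\item The commuting relations \eqref{commuting} follow from the monoidal structure, since the strands involved are far apart, together with \cref{commie2}.
\item The non-commuting relations \eqref{noncommutingcup1} and \eqref{noncommutingcup2}, and the adjacent relation \eqref{adjacentcup}, are checked by explicit Soergel-diagram computations on $4$ vertices in the cases a)--c).
\item The signs $(-1)^{b(\cdot)}$ come from the barbell and polynomial-forcing relations.
\end{itemize}

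\textbf{Step 3: no further relations.} Let $A$ be the abstract algebra defined by the relations. Using the relations, I would show that ${\sf 1}_\la A{\sf 1}_\mu$ is spanned by products $D^\la_\nu D^\nu_\mu$ with $\underline{\nu}\la$ and $\underline{\nu}\mu$ both oriented. The rewriting moves products of generators into this ``cap-then-cup'' normal form:
\begin{itemize}
\item the non-commuting and adjacent relations straighten products;
\item the self-dual relations \eqref{selfdualrelA} and \eqref{selfdualrelD} eliminate products of the form $D D^{\ast}$ at the top.
\end{itemize}
The number of spanning products then equals $\sum_\nu n_{\la,\nu}(q)\,n_{\mu,\nu}(q)$, which is the graded dimension of $\mathcal{H}$. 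Since $A$ surjects onto $\mathcal{H}$ and has at most this dimension, the surjection is an isomorphism.

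\textbf{Main obstacle.} I expect the self-dual relation to be the hardest part. It requires an exact computation of $D_\mu^\la D^\mu_\la$, in which a dot-barbell gets expanded into a sum over all cups covering or doubly covering $\cp$. The decorated case \eqref{selfdualrelD}, with its asymmetric coefficients $2$ and $1$, needs careful tracking of the polynomial action of the $s_0$ root. My first attempt would be to induct on the number of cups above $\cp$, using contraction to peel off the outermost cup.
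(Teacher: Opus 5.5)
The paper does not prove this statement: it imports it from \cite{My1} and uses it as the definition of $\mathcal{H}_{(D_n,A_{n-1})}$. Your outline is the expected strategy: degree-one light leaves as generators, check the relations, then a spanning argument against the light leaves basis. However, your Step~3 has the normal form upside down, and with it the dimension count.

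You claim ${\sf 1}_\la A{\sf 1}_\mu$ is spanned by $D^\la_\nu D^\nu_\mu$ with $\underline{\nu}\la$ and $\underline{\nu}\mu$ oriented. These are products passing through a weight $\nu$ with $\la,\mu\leq\nu$. Those are exactly the products the self-dual relations remove, as you yourself say when you write that they eliminate $DD^*$ at the top. Indeed, \eqref{selfdualrelD} and \eqref{selfdualrelA} rewrite $D^\la_\mu D^\mu_\la$, which passes through $\mu=\la+\cp$, as a combination of $D^\la_{\la-\mq}D^{\la-\mq}_\la$, which pass through smaller weights.

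Accordingly, \cref{cellular basis} gives ${\sf 1}_\la\mathcal{H}{\sf 1}_\mu$ a basis indexed by $\nu$ with $\underline{\la}\nu$ and $\underline{\mu}\nu$ oriented. Its graded dimension is therefore $\sum_\nu n_{\nu,\la}n_{\nu,\mu}$, not your $\sum_\nu n_{\la,\nu}n_{\mu,\nu}$ (with $n_{\la,\mu}=q^{\deg(\underline{\mu}\la)}$ as in the paper). The two counts genuinely differ:
\begin{itemize}
\item For $\la=\mu=\emptyset$, your count contains $q^2$ from $\nu=(1)$, whose cup diagram is a single decorated cup. But ${\sf 1}_\emptyset\mathcal{H}{\sf 1}_\emptyset=\Bbbk{\sf 1}_\emptyset$, since $\underline{\emptyset}$ consists only of rays. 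Indeed \eqref{selfdualrelA} forces $D^\emptyset_{(1)}D^{(1)}_\emptyset=0$, because both sums on its right-hand side are empty.
\item For $n=2$ and $\la=\mu=(1)$, your spanning set has one element, while ${\sf 1}_{(1)}\mathcal{H}{\sf 1}_{(1)}$ has graded dimension $1+q^2$. So your spanning claim would contradict the surjection $A\to\mathcal{H}$.
\end{itemize}
The repair is to aim for the down-then-up normal form $D^\la_\nu D^\nu_\mu$ with $\underline{\la}\nu$ and $\underline{\mu}\nu$ oriented. You then have to show that the non-commuting, doubly non-commuting, adjacent and self-dual relations straighten every word in the generators into this form. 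Your sketch does not yet contain this straightening argument, and it is the real content of ``no further relations''.

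There is also a circularity risk in Step~2. The Hecke-side contraction quoted in this paper (\cref{dilation}) is defined on the generators $D^\la_\mu$ of the very presentation you are proving, and is then asserted to extend to an algebra map. So it cannot be used to reduce the verification of the relations to the minimal configurations of \cref{adj lem0,adj lem1,adj lem2}. For that reduction you need a contraction or dilation map constructed directly on Soergel diagrams, independently of the presentation, or else a direct verification of the relations in general position.
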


 The following property is due to Libedinsky-Williamson \cite{antiLW}, but it is phrased in the language of this paper in \cite[Theorem 5.5]{My1}.

 \begin{thm}(\cite{antiLW})
 \label{cellular basis}
 The  algebra   
$\mathcal{H}_{(D_n,A_{n-1})}  $  is a basic graded cellular
algebra with a graded cellular basis given by
\begin{equation}\label{basis}
\{D_\la^\mu D^\la_\nu \mid  \la,\mu,\nu\in \mptn  \,\, \mbox{ such that }\,\,		\underline{\mu}\la,\underline\nu\la \text{ are oriented} \}
\end{equation}
with 
$${\rm deg}( D_\la^\mu D^\la_\nu) = {\rm deg}(\underline{\mu}\la) + {\rm deg}(\underline{\nu}\la),$$ with respect to the involution $(D_\la^\mu)^*=D_\mu^\la$ and the partial order on $\mptn $ given by the Bruhat order.
\end{thm}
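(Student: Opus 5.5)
The plan is to deduce the statement from the general theory of the anti-spherical Hecke category and then translate it into the language of oriented cup diagrams using \cite{My1}. First, I would recall that Libedinsky and Williamson \cite{antiLW} (building on Elias--Williamson) proved that the anti-spherical Hecke category has a cellular basis of double light leaves. Concretely:
\begin{itemize}
\item for fixed objects $\underline{w},\underline{w}'$, the morphism space has basis $\{ LL_{\lambda,\underline{w}}^{*}\circ LL_{\lambda,\underline{w}'}\}$;
\item here $\lambda$ runs over ${^P}W$, and the light leaves $LL_{\lambda,\underline{w}}$ are indexed by subexpressions of $\underline{w}$ with endpoint $\lambda$ that stay in ${^P}W$;
\item the cell datum uses the Bruhat order, and the anti-involution is the vertical flip of diagrams.
\end{itemize}
I would take $\mathcal{H}_{(D_n,A_{n-1})}$ to be the endomorphism algebra of $\bigoplus_{\mu\in\mptn} B_\mu$, where $B_\mu$ is the indecomposable object attached to $\mu$. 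I would then check that the cellular structure descends to this idempotent truncation. The standard argument is that truncating a graded cellular algebra by idempotents compatible with the cell filtration (here the $B_\mu$ are summands, and the ${\sf 1}_\mu$ are fixed by $*$) is again graded cellular.

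Second, I would show that the cellular basis of the truncation is indexed exactly as in \eqref{basis}. By the graded ranks of cell modules, the graded dimension of ${\sf 1}_\mu\mathcal{H}{\sf 1}_\nu$ equals $\sum_\lambda {^p}n_{\lambda,\mu}\,{^p}n_{\lambda,\nu}$. By the definition of ${^p}n_{\lambda,\mu}$ adopted above, whose justification is \cite[Theorem 7.3]{bowman2023orientedtemperleyliebalgebrascombinatorial} and \cite[Theorem A]{ChrisHSP}, this sum counts the pairs $(\lambda)$ with $\underline{\mu}\lambda$ and $\underline{\nu}\lambda$ both oriented, each contributing in degree $\deg(\underline{\mu}\lambda)+\deg(\underline{\nu}\lambda)$. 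So it suffices to produce, for each oriented $\underline{\mu}\lambda$, an element $D^\mu_\lambda\in{\sf 1}_\mu\mathcal{H}{\sf 1}_\lambda$ of degree $\deg(\underline{\mu}\lambda)$. This element should have leading term the corresponding light leaf, modulo cells strictly higher in the Bruhat order.

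Third, I would construct these elements from the generators. By \cref{flipper}, $\lambda=\mu-\cp^1-\dots-\cp^d$ for anticlockwise cups $\cp^i\in\underline{\mu}$ that can be flipped successively. I would set $D^\lambda_\mu$ to be the corresponding product of degree-one generators $D^{\mu-\cp^1-\dots-\cp^{i}}_{\mu-\cp^1-\dots-\cp^{i-1}}$, and define $D_\lambda^\mu$ as its image under $*$. Well-definedness, meaning independence of the order in which the cups are flipped, follows from the commuting relations \eqref{commuting}, since the flipped cups pairwise commute (\cref{commie2}). Identifying this product with a light leaf up to higher terms is exactly the content of \cite[Theorem 5.5]{My1}, which I would invoke: each generator is the one-tile (or one-cup) light leaf, and composites of light leaves along non-adjacent flips remain light leaves.

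Finally, I would check the remaining structural claims. Basicness holds because $\deg(\underline{\mu}\lambda)=0$ forces $\lambda=\mu$, so the degree-zero part is spanned by the orthogonal idempotents ${\sf 1}_\mu$, which give pairwise non-isomorphic simples. Compatibility of $*$ with $(D^\mu_\lambda)^*=D_\mu^\lambda$ is built into the definition. The main obstacle is the third step: matching the product of generators with a genuine light leaf modulo higher cells, without grinding through Soergel calculus. There I would rely on a triangularity argument: the product and the light leaf have the same degree and the same ``shape'' (the same subexpression), so they differ by terms factoring through higher $\lambda$ in the Bruhat order. This is enough for the cellular basis property.
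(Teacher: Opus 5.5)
The paper does not prove this statement; it quotes it from \cite{antiLW}, in the cup-diagram form of \cite[Theorem 5.5]{My1}. Your overall skeleton is the right one: restrict the Libedinsky--Williamson double-leaves basis, match graded dimensions with the oriented-cup description of ${}^pn_{\la,\mu}$, and read off basicness from the non-negative grading. However, your third step contains a false claim and an unproved one.

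The false claim is that the cups flipped to pass from $\mu$ to $\la$ pairwise commute. Take $n=4$ and $\mu=\down\down\up\up$.
Then $\underline{\mu}$ consists of $\mq=(\frac32,\frac52)$ nested directly inside $\cp=(\frac12,\frac72)$. This is a non-commuting pair in the sense of \cref{dist:def}. Flipping both gives $\la=\up\up\down\down$, with $\underline{\mu}\la$ oriented of degree $2$.
However, $\underline{\mu-\mq}$ consists of the cups $(\frac12,\frac32)$ and $(\frac52,\frac72)$, so $\cp\notin\underline{\mu-\mq}$. Flipping $\mq$ first therefore never reaches $\la$; only the order ``outer cup first'' works, and \eqref{commuting} says nothing here. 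The same happens for a cup and the decorated cup doubly covering it, cf.\ \cref{adj lem2}.
So you must prescribe admissible orders (covering cups before the cups they (doubly) cover). Any order-independence then has to be checked via commutation in the intermediate diagrams, not in $\underline{\mu}$.

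The more serious gap is the identification of such a product $P^\la_\mu$ with a light leaf.

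First, the ideal you may discard is spanned by morphisms factoring through weights $\la'<\la$, i.e.\ lower, not higher, in the Bruhat order. Every element of ${\sf 1}_\la\mathcal{H}{\sf 1}_\mu$ trivially factors through $\mu>\la$. Moreover, the self-dual relations in \cref{presentation} rewrite composites passing through $\mu$ as composites through smaller weights.

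Second, modulo that ideal, ${\sf 1}_\la\mathcal{H}{\sf 1}_\mu$ is free of rank one, spanned by the image of the light leaf. So ``same degree and same shape'' only yields $P^\la_\mu\equiv c\,\mathrm{LL}^\la_\mu$ for some $c\in\Bbbk$. For the products $P^\mu_\la P^\la_\nu$ to be a basis over the integral domain $\Bbbk$ you need $c\in\Bbbk^\times$, and nothing in your argument rules out $c=0$ or $c=2$.

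Establishing this requires an explicit Soergel-calculus computation of how the degree-one leaves compose. That is the work carried out in \cite{ChrisHSP,My1}, and in type $A$ in \cite{ChrisDyckPaper}. Appealing to \cite[Theorem 5.5]{My1} for it is circular, since by the paper's own account that theorem is exactly the statement at hand.

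A smaller point: restricting the double-leaves basis is immediate only when the ${\sf 1}_\mu$ are identities of Bott--Samelson objects of reduced expressions, which are indecomposable in this Hermitian symmetric setting. For summands cut out by non-trivial idempotents, K\"onig--Xi gives cellularity of $e\mathcal{H}e$ but not the naively restricted basis.
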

 
   \section{The generalised Khovanov arc algebra of type $D$}
   \label{MultiplicationIsHard}

   In this section, we review the generalised Khovanov arc algebra of type $D$, $\mathbb{D}_{\La}$, first constructed in \cite{TypeDKhov}. This original construction is combinatorially intensive; indeed, a significant portion of that paper is dedicated to showing that multiplication in $\mathbb{D}_{\La}$ is associative, something that is far from obvious. The multiplication is highly non-local, and we hope that one of the benefits of the work in this paper will be to provide a tractable framework for the structure of $\mathbb{D}_{\La}$. For the sake of brevity, we recall only the definitions strictly necessary for this paper and defer the justification for this to \cref{myexcuse,only two,get out of jail remark}. We note that our terminology differs slightly from \cite{TypeDKhov} in order to align with \cref{recap}, but the correspondence is often immediate. 
  
For each defect $k \in \mathbb{N}$, there exist exactly two principal blocks: one odd and one even, where the weight diagrams have no nodes labelled $\times$ or $\circ$. We denote the algebras associated to these blocks by $\mathbb{D}_{2k+1}$ and $\mathbb{D}_{2k}$, respectively. Since the algebra $\mathbb{D}_{\La}$ depends (up to canonical isomorphism) only on the defect and parity of $\La$, we will henceforth restrict our attention to the algebras $\mathbb{D}_n$ where $n \in \{2k, 2k+1\}$.

We begin by first constructing the individual elements of $\mathbb{D}_n$.
\begin{defn}
Let $\mu,\la \in \mptn$. We define the {\sf cap diagram} of $\la$, denoted $\overline{\mu}$, to be the cup diagram, $\underline{\mu}$, reflected through the horizontal-axis.
We obtain an oriented cap diagram $\la \overline{\mu}$ by glueing $\overline{\mu}$ on top of the weight $\la$. The cap diagram $\la \overline{\mu}$ is {\sf oriented} if and only if $\underline{\mu}\la$.

We define the degree of the oriented cap diagram $\la \overline{\mu}$ to be the same as the degree of $\la \overline{\mu}$. Equivalently, this is the number of caps whose right vertex is labelled by $\down$. Pictorially, this looks like:
\[\deg(\underline{\mu}\lambda)=
		\sharp\left\{  \begin{minipage}{1.2cm}
			\begin{tikzpicture}[scale=0.55]
				\draw[thick](-1,-0.155) node {$\boldsymbol\up$};
				\draw[very thick](0,0.1) node {$\boldsymbol\down$};
				\draw[very thick]  (-1,-0) to [out=90,in=180] (-0.5,0.75) to [out=0,in=90] (0,0);
			\end{tikzpicture}
		\end{minipage}
			\right\}	+
\sharp \left\{	\begin{minipage}{1.2cm}
			\begin{tikzpicture}[scale=0.55]
				\draw[thick](-1,0.1) node {$\boldsymbol\down$};
				\draw[very thick](0,0.1) node {$\boldsymbol\down$};
				\draw[very thick]  (-1,-0) to [out=90,in=180] (-0.5,0.75) to [out=0,in=90] (0,0);
				\draw[very thick, fill=black](-0.5,0.75) circle (4pt);
			\end{tikzpicture}
		\end{minipage}
		\right\}.	 \]
\end{defn}

\begin{defn}
Let  $\la,\mu \in \mptn$. 
We define the \textsf{circle diagram} $b=\underline{\la}\overline{\mu}$ to be the union of the arcs contained in the cup diagram $\underline{\la}$ and those in the cap diagram $\overline{\mu}$. Diagrammatically, we picture this simply by placing $\underline{\la}$ and $\overline{\mu}$ on the same horizontal axis.
\end{defn}

\begin{defn}
Let $A=\overline{\alpha}\underline{\alpha}$ be a type $D$ Temperley-Lieb diagram (as defined in \cite{MR1618912}) that is symmetric across the horizontal axis.
An (admissible) \textsf{stacked circle diagram} of height one is a sequence, $d=\underline{\la}A\overline{\mu}$, of two circle diagrams of the form $\underline{\la}\overline{\alpha}, \underline{\alpha}\overline{\mu}$ such that $\la,\mu,\alpha\in \mptn$.

We represent a stacked circle diagram visually by placing the two circle diagrams on top of each other with $\underline{\la}\overline{\alpha}$ at the bottom and $\underline{\alpha}\overline{\mu}$ on the top. Note that the middle section consists of the symmetric type $D$ Temperley-Lieb diagram $A=\overline{\alpha} \underline{\alpha}$.

\end{defn}

    \begin{figure}[ht!]
     $$ 
\begin{minipage}{5.3cm}
 
$$ 
    
    \caption{Above are the two circle diagrams $\overline{\mu}\underline{\alpha}$ and $\overline{\alpha}\underline{\la}$ and below the stacked circle diagram $\underline{\la}A\overline{\mu}$, where $A=\underline{\alpha}\overline{\alpha}$ is a type $D$ Temperely-Lieb diagram symmetric across the horizontal axis.}
    \label{stacking}
    \end{figure}

\begin{rmk}
We only consider the number of decorations on a given arc in a cup diagram modulo two. For example, if $\underline{\alpha}$ contains a decorated ray, then in $A=\overline{\alpha}\underline{\alpha}$ this strand is decorated twice. In the stacked circle diagram $D=\underline{\la}A\overline{\mu}$, we therefore picture the strand as undecorated, just like the leftmost strand of \cref{stacking}. 
\end{rmk}

\begin{defn}
A \textsf{line} in a (stacked) circle diagram, $D$, is any connected component of $D$ that intersects the northern or southern boundary of $D$. A line is \textsf{propagating} if it intersects both boundaries. 
A \textsf{circle} in a (stacked) circle diagram, $D$, is a connected component of $D$ that does not intersect either boundary of $D$.
\end{defn}

When multiplying elements of $\mathbb{D}_n$, it will be useful to specify coordinates in a stacked circle diagram. To avoid confusion with our notation for the vertices of cups, we will use square brackets $[x,y]$ where $x\in\frac{1}{2}+\mathbb{Z}_{\geq0}$ gives the horizontal position and $y=\{0,1\}$ specifies whether the vertex is in the top or bottom cup/cap diagram.

\begin{defn}
\label{Khov basis}
Let  $\la,\mu,\nu\in \mptn$.
Denote by $$\mathbb{B}_{n}=\{\underline{\la}\nu\overline{\mu}\mid \underline{\la}\nu, \nu\overline{\mu} \text{ are oriented cup (resp. cap) diagrams}\}$$
the set of \textsf{oriented circle diagrams} on $k$ vertices.
We define the degree of $\underline{\la}\nu\overline{\mu}$ to be the sum of the degrees of $ \underline{\la}\nu$ and $\nu\overline{\mu}$.
Then $\mathbb{D}_{n}$ is the graded $\Bbbk$-vector space with basis $\mathbb{B}_{n}$, where $\Bbbk$ is any field.
\end{defn}

The rest of this section is dedicated to defining an algebra multiplication on $\mathbb{D}_n$.

\begin{defn}
\label{something about orientation}
An \textsf{oriented connected component} in a (stacked) circle diagram is a component $C$, such that each arc $\gamma_i \in C$ is oriented, in the sense (i) that every undecorated arc has one vertex with one weight pointing into the strand and one pointing out of the strand and (ii) every decorated arc has either both nodes pointing into the arc or both nodes pointing out of the arc (cf. \cref{oriented or not?})
\end{defn}

\begin{defn}
Let  $D=\underline{\la}A\overline{\mu}$ be a stacked circle diagram of height $1$. An orientation of $D$ is a tuple $\boldsymbol{\nu}=(\nu_0,\nu_1)$ of two weights in $\mptn$ such that 
$\underline{\la}\nu_1\overline{\alpha}$ and $\underline{\alpha}\nu_1\overline{\mu}$ are both oriented.

An \textsf{oriented stacked circle diagram} is a stacked circle diagram $\underline{\la}A\overline{\mu}$ together with an orientation $\boldsymbol{\nu}$, which we will denote 
$\underline{\la}(A,\boldsymbol{\nu})\overline{\mu}$ or equivalently
$\underline{\la}\nu_0\overline{\alpha}\underline{\alpha}\nu_1\overline{\mu}$.

\end{defn}

   \begin{rmk}
 \label{myexcuse}
The central focus of \cite[Section 5]{TypeDKhov} is the proof of the non-trivial fact that multiplication in $\mathbb{D}_{n}$ is associative. Therefore, in \cite{TypeDKhov}, multiplication is explicitly defined for determining the product of an arbitrary number of ($k$) elements of $\mathbb{D}_{n}$. Consequently, \cite{TypeDKhov} defines multiplication on circle diagrams of arbitrary height to handle products of $n$ elements simultaneously. 
\end{rmk}

In the following, we present criteria for checking that $D=\underline{\la}A\overline{\mu}$ is orientable.

\begin{lem}
The following is a set of necessary and sufficient conditions that the stacked circle diagram  $D=\underline{\la}A\overline{\mu}$ must satisfy in order to be orientable.
\begin{itemize}
\item Every circle must have an even number of decorations.
\item Every propagating line must have an even number of decorations.
\item Every non-propagating line must have an odd number of decorations.
\end{itemize}
\end{lem}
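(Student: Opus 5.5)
The plan is to reduce orientability of $D=\underline{\la}A\overline{\mu}$ to a parity condition on each connected component separately. An orientation $\boldsymbol{\nu}=(\nu_0,\nu_1)$ assigns a label $\up$ or $\down$ to every vertex $[x,0]$ and $[x,1]$. The orientation conditions of \cref{something about orientation} are local to each arc, and each constraint relates only the two endpoints of one arc. So orientability of $D$ is equivalent to orientability of each connected component, with two exceptions: the weights $\nu_0,\nu_1$ must lie in $\mptn$, and rays meeting the outer boundary impose fixed boundary conditions. We need $\nu_0,\nu_1\in\mptn$, i.e. each weight has an even number of $\up$'s for the fixed parity $n$. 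I would check separately that this global parity is automatic once every component is oriented.

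The key step is to encode labels as states. Traverse a component and record, at each vertex, whether the strand passes ``in'' or ``out'' relative to a chosen direction of travel. An undecorated arc preserves this state from one endpoint to the next, while a decorated arc (an orientation-reversing point, as in \cref{degrees}) flips it. This is exactly the content of the two bullet points in \cref{something about orientation}.

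I would then treat the three kinds of component in turn.
\begin{itemize}
\item For a circle, going once around returns to the starting vertex. A consistent assignment therefore exists if and only if the total number of flips is even, and when it does there are exactly two orientations (the choice at the starting vertex).
\item For a line, the terminal rays at the boundary carry prescribed labels: an undecorated ray ends in $\down$ and a decorated ray in $\up$, by the third condition of \cref{oriented or not?}. A propagating line runs from the bottom boundary (a ray of $\underline{\la}$) to the top boundary (a ray of $\overline{\mu}$). Both ends point ``downward'' in the same global sense, which in traversal terms means one end is entered and the other exited. Consistency then forces an even number of decorations, where decorations on rays are counted as in the remark on counting mod two.
\item A non-propagating line starts and ends on the same boundary. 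Its two endpoints are both rays of the same cup/cap diagram and are traversed in opposite vertical directions. The required labels therefore differ by one flip, forcing an odd count.
\end{itemize}
In each case, necessity follows from propagating the state along the path. Sufficiency comes from choosing the initial state and propagating, since the only possible obstruction is the closing-up condition just analysed.

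The main obstacle I expect is the bookkeeping of the rays and the boundary. I need to verify carefully that a ray of $\underline{\la}$ at the bottom, read in the traversal convention, matches the $\down$/$\up$ requirement with the sign I claimed. I also need to handle the middle diagram $A=\overline{\alpha}\underline{\alpha}$, where a decorated ray of $\underline{\alpha}$ becomes doubly decorated and hence is counted as undecorated. For this I would use the symmetry of $A$ and track one propagating ray of $\alpha$ explicitly on a small example, then argue that concatenation respects the parities.

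Finally, I need to check that the resulting weights have an even number of $\up$'s. I would argue this from the fact that at most one decorated ray occurs per cup diagram (\cref{drawcups}) together with the parity count per component: each cup contributes an even number of $\up$'s or pairs, and the single possible decorated ray accounts for the parity matching $n$.
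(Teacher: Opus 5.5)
This is essentially the paper's proof: decorations act as orientation-reversing points, so circles need an even count, and the ray rules of \cref{oriented or not?} fix the behaviour of lines at the outer boundary, giving an even count on propagating lines and an odd count on non-propagating ones; like the paper (cf.\ its remark that a doubly decorated middle strand is pictured as undecorated), you treat the vertical middle strands of $A$ as ordinary strands rather than as rays of $\alpha$ pinned to $\down$ (undecorated) or $\up$ (decorated), and this is the reading under which the lemma is true. Your explicit tracking of traversal direction is more careful than the paper's one-line claim that equal labels on the two rays force an even count, which is only right for propagating lines; your closing parity remark is most cleanly justified by noting that any orientation of $\underline{\la}$ (resp.\ $\overline{\mu}$) differs from $\la$ (resp.\ $\mu$) by flipping cups, and flipping a cup never changes the parity of the number of $\up$'s.
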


\begin{proof}
The conditions in \cref{something about orientation} mean one can interpret decorations on an arc as orientation-reversing points. The first statement then follows immediately. Next, we note that for any line, if the two labels on the two rays are the same (respectively different), there must be an even (respectively odd) number of decorations on the strands between them. The rules for the decoration of rays from \cref{drawcups} then mean that the final two statements also follow.
\end{proof}
 
 \subsection{The decorated generalised surgery procedure}
 
To extend $\mathbb{D}_{n}$ into a graded algebra, we need to equip $\mathbb{D}_{n}$ with a method of multiplying oriented circle diagrams. In this subsection, we will describe how to multiply together a pair of oriented circle diagrams. In \cite[Section 5]{TypeDKhov}, it is shown (with great care) that this multiplication is associative. 
We define
\begin{equation}
\label{multeq}
(\underline{\mu}\la\overline{\nu})(\underline{\alpha}\beta\overline{\gamma})= \left\{ \begin{array}{ll}\sum_{i}\zeta_i(\underline{\mu}\sigma_{i}\overline{\gamma}) & \mbox{ if $(\overline{\nu})^*=\underline{\nu}=\underline{\alpha}$} \\
0 & \mbox{ otherwise}  
 \end{array} \right.
\end{equation}
where the precise weights $\sigma_i$ and coefficients, $\zeta_i$, will be described below via the \textit{decorated generalised surgery procedure}.

Suppose we have two oriented circle diagrams $(\underline{\la}\nu\overline{\alpha})$ and $(\underline{\alpha}\gamma\overline{\mu})$ (so the product $(\underline{\la}\nu\overline{\alpha})(\underline{\alpha}\gamma\overline{\mu})$ from \eqref{multeq} is non-zero). By placing $(\underline{\alpha}\gamma\overline{\mu})$ on top of $(\underline{\la}\nu\overline{\alpha})$ we create an oriented stacked circle diagram, $D$. To perform surgery, select a cup $\ps\in \underline{\alpha}$ that is not (doubly) covered by any $\cp \in \underline{\nu}$ and delete $\ps$ along with its corresponding cap in $\overline{\alpha}$. Replace this cup/cap pair with a pair of undecorated vertical lines between the two weights at $l_\ps$ and $r_\ps$ (always undecorated even if $\ps$ was decorated), hence obtaining a new stacked circle diagram, $D'=\underline{\la}B\overline{\mu}$ say (see the first equality of \cref{KhovMultEg0}). 
We will define the $\ps$-surgery of $D$  to be equal to a signed sum 
 over all possible orientations of $D'$, where the coefficients ($0$, $1$, or $-1$) are determined below. 
 
\begin{rmk}
\label{admissible}
We say that the diagram $D'=\underline{\la}B\overline{\mu}$ is {\sf admissible} if $B=\overline{\beta}\underline{\beta}$ is a horizontally symmetric Temperley-Lieb diagram. In particular, every decorated arc/strand must be able to be deformed isotopically to the left of the diagram. 
Suppose that performing a $\ps$-surgery to $D$ results in a circle diagram $D'$ that is not admissible. If there exists a cup $\ps\neq\cp \in \underline{\alpha}$ such that we can perform $\cp$-surgery to  $D$ and obtain an admissible diagram, we perform the $\cp$-surgery first. In practice, this means first performing surgery on the right-hand side 
 of any decorated $\ps \in \underline{\alpha}$. 
The need for this constraint can be seen in \cite[Example 6.7]{TypeDKhov}.
\end{rmk}

We can iterate the (admissible) surgery procedure until 
  the resulting stacked circle diagram we obtain is $\tilde{D}=\underline{\la}I\overline{\mu}$, where $I$ is the identity Temperley-Lieb diagram, containing only vertical lines in the centre. We delete these vertical lines and hence obtain a sum of oriented circle diagrams $B\in \mathbb{B}_{n}$, all of which have the same underlying 
  circle diagram but differ from one another in their orientations. An example of this process can be seen in \cref{KhovMultEg0}.

 \begin{figure}[ht!]
 $$D=\begin{minipage}{1.8cm}
 
\end{minipage}=\sum_{j} \zeta_j'B_j$$
	\caption{An example of the surgery process in action, where the sums are taken over weights that result in an orientable circle diagram and the coefficients $\zeta_i$ are determined by the surgery procedure.
	By  \cref{admissible}  our choice of  the first surgery to be performed at $(l_\ps,r_\ps)=(\frac{5}{2},\frac{7}{2})$ was the only possible choice (first 
	performing $\cp$-surgery for  	$\cp=(\frac{1}{2},\frac{3}{2})$ would result in an inadmissible diagram).}
	\label{KhovMultEg0}
\end{figure}

\begin{rmk}
\label{only two}
To construct an explicit isomorphism with $\mathcal{H}_{(D_n, A_{n-1})}$, which is similarly presented by generators in degrees 0 and 1 with quadratic relations (\cref{presentation}), it will suffice to map generators to generators and verify that the degree two relations hold in $\mathbb{D}_{n}$. Accordingly, we restrict our definitions to the multiplication of pairs of elements. While this renders our exposition less general than that of \cite{TypeDKhov}, the extension to the general case is natural.
\end{rmk}

The coefficients, $\zeta_i$, and orientations, $\sigma_i$, obtained during the surgery process of \cref{multeq} are determined by the number of connected components in the stacked circle diagrams $D$ and $D'$. There are three possible situations:
\begin{itemize}
\item A \textbf{Merge} is when the surgery reduces the number of connected components (by one).
\item A \textbf{Split} is when the surgery increases the number of connected components (by one).
\item A \textbf{Reconnect} is when the surgery keeps the number of connected components fixed.
\end{itemize}
We will give the exact formulae for each of these situations shortly, but first, we need to define some features of oriented stacked circle diagrams.

\begin{defn}
\label{tag}
Suppose $C$ is a connected component inside an oriented stacked circle diagram $D$. 
We define the \textsf{tag} of $C$, $t(C)$, to be the rightmost vertex which $C$  intersects (if there are two choices for $t(C)$, we choose the northernmost).  
The orientation of $C$ is determined by the symbol at $t(C)$:
\begin{itemize}
\item A circle $C$ is oriented \textsf{clockwise} if the symbol at $t(C)$ is $\down$.
\item A circle $C$ is oriented \textsf{anti-clockwise} if the symbol at $t(C)$ is $\up$.
\item Any line is always said to be  \textsf{anti-clockwise} oriented.
\end{itemize}
We write ${\sf orient}(C)=\down$ if $C$ is a clockwise component (and similarly ${\sf orient}(C)=\up$ for anti-clockwise components).   
 \end{defn}

We next define a constant associated with the connected components of a diagram that will be needed to calculate the coefficients $\zeta_i$.

\begin{defn}
\label{signs up}
Suppose $C$ is a connected component inside a (oriented) stacked circle diagram $D$ containing the vertex $[i,y]$, for $i\in\frac{1}{2}+\mathbb{Z}_{\geq0}$ and $y\in\{0,1\}$. 
We define $$
{\sf sign}_D[i,y]= 
\begin{cases}
1		&\text{if $ {\sf orient}(C)$ matches the symbol at $[i,y]$}\\
-1						&\text{otherwise.}
\end{cases}
$$
See \cref{BigHopefullyUseful} for an example.
\end{defn}

\begin{rmk}
We should note that this differs from the definition used in \cite{TypeDKhov}, but the equivalence of the two definitions can be easily seen.
\end{rmk}

\begin{figure}
 
\caption{An oriented stacked circle diagram, $D$, where the tag for each connected component has been highlighted in red. We have that ${\sf sign}_D[\frac{19}{2},0]=1$ with the label at $[\frac{19}{2},0]$ and the corresponding tag of the component circled.}
	\label{BigHopefullyUseful}
\end{figure}

We are now ready to give explicit rules for the decorated generalised surgery procedure. Throughout we suppose that performing an $\ps$-surgery to an oriented stacked circle diagram, $D=\underline{\la}(A,\boldsymbol{\nu})\overline{\mu}$, creates the oriented stacked circle diagram $D'=\underline{\la}B\overline{\mu}$:

\subsubsection{Merge}
This is when an $\ps$-surgery replaces two connected components with a single component (for $\ps=(l_\ps , r_\ps)$). 
Let  $C_1$ denote the component of 
$D=\underline{\la}(A,\boldsymbol{\nu})\overline{\mu}$ containing 
$[l_\ps, 1]$;
let $C_0$ denote  the component of $D$ containing $[l_\ps, 0]$;
 and let $C$ denote the component in $D'=\underline{\la}B\overline{\mu}$ containing both $[l_\ps, 0]$  and $[l_\ps, 1]$.
\begin{equation}
\label{mergeeq}
\underline{\la}(A,\boldsymbol{\nu})\overline{\mu}= \left\{ 
 \right.
\end{equation}
where $\boldsymbol{\nu}'$ is obtained by changing $\boldsymbol{\nu}$ 
only to orient $C$ anti-clockwise;
 and $\boldsymbol{\nu''}$ is obtained by orienting $C$ clockwise.
If $C$ is a line and so cannot be oriented clockwise, then we define $\underline{\la}(B,\boldsymbol{\nu''})\overline{\mu}$ to be equal to $0$.
For two simple examples, see \cref{In turn}.

\begin{figure}[ht!]
$$\begin{minipage}{5cm}%
 
\end{minipage}
$$
 \caption{In the first equality we perform $\ps$-surgery for $\ps=(\frac{5}{2}, \frac{15}{2})$ and in the second equality we perform $\cp$-surgery for $\cp=(\frac{7}{2}, \frac{13}{2})$. Note that in the second surgery $C_0$ is oriented clockwise and hence the merged circle, $C$, is oriented clockwise and we pick up a factor of $(-1)^2$, where $-1={\sf sign}_D[\frac{7}{2},0]={\sf sign}_{D'}[\frac{7}{2},0]$}
 \label{In turn}
\end{figure}

\subsubsection{Split}
This is when an $
\ps$-surgery replaces one connected component with two connected components (for $\ps=(l_\ps, r_\ps)$).   We let $C$ denote 
 the connected component of $D=\underline{\la}(A,\boldsymbol{\nu})\overline{\mu}$ 
 containing  
 $[l_\ps,1]$ and $[r_\ps,1]$;
  we let $C_l$ denote the component of $D'=\underline{\la}B\overline{\mu}$ containing $[l_\ps, 1]$;
  and we let $C_r$ the component in $D'$ containing $[r_\ps, 1]$. 
\begin{equation}
\label{spliteq}
\underline{\la}(A,\boldsymbol{\nu})\overline{\mu}
= \left\{ 
 \right.
\end{equation}
where $\boldsymbol{\nu'''}$ is obtained by changing $\nu$ such that $C_r$ and $C_l$ are oriented clockwise. $\boldsymbol{\nu'}$ is obtained by orienting $C_l$ is anti-clockwise and $C_r$ clockwise and  $\boldsymbol{\nu''}$ is obtained by orienting $C_l$ is clockwise and $C_r$ anti-clockwise. See \cref{Splitting} for an example.

\begin{figure}[ht!]
\begin{equation*}
\begin{aligned}
\begin{minipage}{5cm}%
 
\end{minipage}
\end{aligned}
\end{equation*}
 \caption{In the first equality we perform $\mq$-surgery for $\mq=(\frac{9}{2}, \frac{11}{2})$ and in the second equality we perform $\ot$-surgery for $\ot=(\frac{1}{2}, \frac{3}{2})$. In the first equality, $C$ is oriented clockwise, and hence both circles are oriented clockwise. We pick up a factor of $(-1)^5$, from the fact that $-1={\sf sign}_{D'}[\frac{11}{2},,1]$ and $1={\sf sign}_{D}[\frac{9}{2},1]={\sf sign}_{D'}[\frac{9}{2},1]$. In the $\ot$-surgery, $C$ is oriented anti-clockwise, and so we get a sum of two oriented circle diagrams where either $C_r$ or $C_l$ is oriented clockwise. Noting that $-1={\sf sign}_{D'}[\frac{3}{2},1]$, $1={\sf sign}_{D'}[\frac{1}{2},1]$ and $\ot$ is undecorated gives us the required signs.}
 \label{Splitting}
\end{figure}

\subsubsection{Reconnect}
This is when $\ps$-surgery preserves the number of connected components. This only happens if the cup and cap lie on two distinct (propagating or non-propagating) lines in $D=\underline{\la}(A,\boldsymbol{\nu})\overline{\mu}$B.
\begin{equation}
\label{reconeq}
\underline{\la}(A,\boldsymbol{\nu})\overline{\mu}= \left\{ 
 \right.
\end{equation}
See \cref{Zero} for an example of a reconnect surgery.

\begin{figure}[ht!]
$$
-\begin{minipage}{5cm}%
 
\end{minipage}=0+0
$$
 \caption{For both terms of the sum, $\gr$-surgery at $\gr=(\frac{17}{2}, \frac{19}{2})$ is zero as both lines involved are non-propagating.}
 \label{Zero}
\end{figure}

\begin{defn}
We define the generalised arc algebra $\mathbb{D}_n$ to be the graded $\Bbbk$-vector space with basis $\mathbb{B}_{n}$, where $\Bbbk$ is any field, equipped with the multiplication from \cref{multeq}, as outlined in the previous subsection.
\end{defn}

\begin{rmk}
\label{get out of jail remark}
We note that the presentation of the algebra differs only slightly from the definition given in \cite{TypeDKhov}, specifically by multiplying the left-hand side of
 \eqref{spliteq} by $-1$.  
 This is due to a slight error in that paper (which results in their algebra failing to be associative). 
 We will discuss this in more detail (and demonstrate the failure of associativity in a small example) in the appendix to this paper. We thank Eliot Grimont for pointing this out to us.
%
 \end{rmk}

\color{red}

%
\color{black}
One can find further examples of this multiplication in \cite[Section 6.3]{TypeDKhov} (noting the slight change highlighted in \cref{get out of jail remark}), where the importance of performing surgery to maintain admissible circle diagrams is also highlighted.

The following result tells us that $\mathbb{D}_n$ is generated by precisely its degree $0$ and $1$ elements.

\begin{thm}[\cite{TypeDKhov}, Theorem 6.10]
\label{generated by in Khov}
The algebra $\mathbb{D}_n$ is generated by $\{\underline{\la}\la\overline{\la}\}\cup\{\underline{\la}\la\overline{\mu}\mid \la=\mu-\cp, \cp \in \underline{\mu}\}$
\end{thm}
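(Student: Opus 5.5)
The plan is an induction on degree, using the basis $\mathbb{B}_n$ and the grading. Let $A\subseteq\mathbb{D}_n$ be the subalgebra generated by the idempotents $e_\la=\underline{\la}\la\overline{\la}$ and the degree-one elements $\underline{\la}\la\overline{\mu}$ with $\la=\mu-\cp$, together with their duals $\underline{\mu}\la\overline{\la}$. The duals are included so that the generating set is stable under the anti-involution reflecting diagrams through the horizontal axis. I will show by induction on $\deg(\underline{\la}\nu\overline{\mu})$ that every basis element $\underline{\la}\nu\overline{\mu}$ lies in $A$.

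\textbf{Step 1: factor through the middle weight.} First I check that
\[
(\underline{\la}\nu\overline{\nu})(\underline{\nu}\nu\overline{\mu})=\underline{\la}\nu\overline{\mu}.
\]
In the stacked diagram, $\underline{\nu}\nu\overline{\nu}$ meets $\overline{\nu}\underline{\nu}$ with every circle anticlockwise, since the degree is zero. Every surgery on a cup $\ps\in\underline{\nu}$ is then either a merge with an anticlockwise circle or a reconnect. A merge with an anticlockwise circle contributes the identity with coefficient $1$, so the product returns the unique orientation, namely $\nu$. This reduces the problem to elements of the form $\underline{\la}\nu\overline{\nu}$ and, dually, $\underline{\nu}\nu\overline{\mu}$.

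\textbf{Step 2: peel off one cup.} By \cref{flipper}, $\underline{\la}\nu$ oriented of degree $d$ means $\nu=\la-\cp^1-\dots-\cp^d$ with all $\cp^i\in\underline{\la}$. I choose a cup $\cp=\cp^i$ that is not (doubly) covered by any other $\cp^j$; for instance, take it maximal among the flipped cups. Put $\la'=\la-\cp$. Then $\underline{\la}\la'\overline{\la'}$ is a degree-one generator, and $\underline{\la'}\nu$ should be oriented of degree $d-1$ by the final lemma of \cref{Cup combinatorics} and \cref{commie2}. The claim is
\[
(\underline{\la}\la'\overline{\la'})(\underline{\la'}\nu\overline{\nu})=\pm\,\underline{\la}\nu\overline{\nu}+(\text{terms in } A),
\]
and the other terms have lower degree or are already known to lie in $A$ by induction. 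This step needs the surgery rules: at the stage where the cups of $\underline{\la'}$ adjacent to $\cp$ are surgered against their caps, only merges with anticlockwise components occur, except for one step that turns the $\cp$-flip into the $\cp$-orientation on $\underline{\la}$.

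\textbf{Main obstacle.} The hard part is verifying this leading-term claim in the covered and doubly-covered configurations of \cref{adj lem2}, case c). There $\underline{\la'}$ differs from $\underline{\la}$ by a non-commuting concentric pair, and decorations interact with the admissibility rule of \cref{admissible}. I would first reduce to the minimal pictures of \cref{adj lem0,adj lem1,adj lem2}, using the locality of surgery on components not touching $\cp$, in the spirit of \cref{general}. Then I would check each small case by hand, tracking the signs through ${\sf sign}_D$. The coefficient $\pm1$ is a unit, so sign errors do not affect generation.

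If this direct check becomes unwieldy, the fallback is a dimension argument. Both algebras are graded cellular with matching graded dimensions, by \cref{cellular basis} and \cref{Khov basis}. It would then suffice to show that products of generators along maximal-cup chains are triangular with unit diagonal with respect to the basis $\mathbb{B}_n$, ordered by the Bruhat order on the middle weight.
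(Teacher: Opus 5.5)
The paper gives no proof of this statement; it is quoted from \cite{TypeDKhov}. Your argument therefore has to stand on its own, and as written it has a real gap. (You are right to add the duals $\underline{\mu}\la\overline{\la}$. Products of the elements $\underline{\la}\la\overline{\mu}$ with $\la=\mu-\cp$ alone are nonzero only along strictly increasing chains of weights, so they never produce, for example, $\underline{\mu}\la\overline{\mu}$.)

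\textbf{Step 1 is false as an exact identity.} In the stacked diagram for $(\underline{\la}\nu\overline{\nu})(\underline{\nu}\nu\overline{\mu})$, the components run through $\underline{\la}$ and $\overline{\mu}$, not only through small circles. After one merge, a later surgery can be a split. Take $n=4$, $\la=\mu=\down\down\up\up$ (cups $(\tfrac32,\tfrac52)$ and $(\tfrac12,\tfrac72)$), and $\nu=\la-(\tfrac32,\tfrac52)=\down\up\down\up$. Each layer is one anticlockwise circle. Surgery at $(\tfrac12,\tfrac32)$ merges them. Surgery at $(\tfrac52,\tfrac72)$ then splits an anticlockwise circle into the two circles of $\underline{\la}\overline{\la}$. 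The product is therefore $\pm\underline{\la}\nu\overline{\la}\pm\underline{\la}\nu'\overline{\la}$ with $\nu'=\up\down\up\down=\la-(\tfrac12,\tfrac72)$. The paper's own computations of products of exactly this shape, in the proof of \cref{selfdual1} (e.g.\ \eqref{DsumboiDC}), likewise return two basis elements.

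\textbf{The induction parameter is the deeper problem.} Multiplication in $\mathbb{D}_n$ is graded, so every error term in Step 1 or Step 2 has the same degree as the term you want. In the example, both terms have degree $2$. ``Lower degree'' never occurs, so induction on degree gives no control over these terms.

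What you actually need is a triangularity statement for surgery:
$(\underline{\la}\nu\overline{\nu})(\underline{\nu}\nu\overline{\mu})\in\pm\underline{\la}\nu\overline{\mu}+\sum_{\nu'<\nu}\Bbbk\,\underline{\la}\nu'\overline{\mu}$.
In the example, $\nu'<\nu$ because $\underline{\nu}\nu'$ is oriented. You also need the analogous statement, with unit leading coefficient, for your chain products in Step 2. Then run the induction on the middle weight along a linear extension of the Bruhat order.

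That triangularity is the cellularity of $\mathbb{D}_n$, the type $D$ analogue of Brundan--Stroppel's multiplication theorem. Your fallback says it ``would suffice'' but does not prove it. Note also that \cref{Khov basis} only gives a basis, not cellularity. Equality of graded dimensions with $\mathcal{H}_{(D_n,A_{n-1})}$ does not by itself give generation.

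\textbf{A smaller point in Step 2.} The survival of the other flipped cups in $\underline{\la-\cp}$ does not follow from \cref{commie2}: a flipped cup nested directly inside $\cp$ does not commute with $\cp$. It follows because removing $\cp$ only destroys the smallest cup (doubly) covering $\cp$ (\cref{adj lem2}, \cref{general}). Your maximality choice guarantees that this cup is unflipped.
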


 \section{Contraction}
 \label{KhovContract}

In this section we construct contraction maps. These will allow us to compare weights, cups, Hecke categories and generalised arc algebras of different sizes. In the proof of Theorem A, this will allow us to limit our sights to \lq \lq small cases", where the cup diagrams and Khovanov arc diagrams have the minimal number of vertices. This will justify our choice to picture the adjacent cases from \cref{adj lem0,adj lem1,adj lem2} on a minimal number of vertices. The contraction map will provide a quick and easy way to generalise smaller cases to all of $\mathcal{H}_{(D_n, A_{n-1})}$ and $\mathbb{D}_n$. As such, this will significantly reduce the intricacy of the calculations involved.

\begin{defn}
For $k\in \mathbb{Z}_{>0}$, define $\mathscr{P}^k_{n}$ to be the cup diagrams with a cup $\cp \in \underline{\mu}$ such that $(l_\cp,r_\cp)=(k-\frac{1}{2},k+\frac{1}{2})$.
Similarly $\mathscr{P}^0_{n}$, if and only if there exists a decorated $\mq \in \underline{\la}$ with $(l_\mq,r_\mq)=(\frac{1}{2},\frac{3}{2})$
We call such weights {\sf contractible at $k$}.
 \end{defn}

\begin{defn}
For  $0 \leq k \leq n$ we define the {\sf contraction map} $\Phi_k:  \mathscr{P}^k_{n} 
 \longrightarrow
  \mathscr{P}_{n-2}$ on weights by setting $\Phi_k (\la)$ for $\la \in  \mathscr{P}^k_{n}$ to be the weight obtained from $\la$ by removing these vertices.
  \end{defn}
 
\begin{rmk}

Strictly speaking, we $(i)$ fix the vertices at $x=\frac{1}{2}, \dots, k-\frac{3}{2}$, $(ii)$ then delete the vertices at $x=k-\frac{1}{2},k+\frac{1}{2}$, $(iii)$ then shift the vertices at $x=k+\frac{3}{2},\dots,n-\frac{1}{2}$ to $x-2$ and $(iv)$ finally  orientate the first node (at $x=\frac{1}{2}$) as required to maintain an even number of $\up$ vertices.
See \cref{dilation on cups} for some examples of this.
\end{rmk}

    \begin{figure}[ht!]
     $$   
$$ 
    
    \caption{$\la=(1,2,3,4,5,6,7,8,9,8,8,8,3)$ is contractible at $k=0,15,11$. We depict below the contraction of $\la$ at $k={\color{magenta}0},{\color{orange}5}$ and ${\color{cyan}11}$ respectively; these are all $k$ for which $\la$ is contractible.}
    \label{dilation on cups}
    \end{figure} 
 
The following lemmas follow directly from the definitions.

\begin{lem}
The map $\Phi_k$ is bijective.
\end{lem}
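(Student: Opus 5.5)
We have to show that $\Phi_k:\mathscr{P}^k_n\to\mathscr{P}_{n-2}$ is bijective. The plan is to write down an explicit inverse, the \emph{dilation} map $\Psi_k:\mathscr{P}_{n-2}\to\mathscr{P}^k_n$, and then check the two composites. Take $k\geq 1$ first and a weight $\la'\in\mathscr{P}_{n-2}$. Define $\Psi_k(\la')$ by four steps:
\begin{itemize}
\item keep the labels at $x=\frac12,\dots,k-\frac32$;
\item shift the labels at positions $x\geq k-\frac12$ to $x+2$;
\item insert the pair $\down\,\up$ at $x=k-\frac12,k+\frac12$;
\item adjust the first node, if needed, so that the number of $\up$'s is even.
\end{itemize}
Inserting $\down\up$ adds exactly one $\up$, so the parity has to be restored. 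For $k=0$, insert $\up\up$ at $x=\frac12,\frac32$ and shift everything else up by two; this adds two $\up$'s, so parity is automatically preserved.

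Next I check that $\Psi_k(\la')$ really lies in $\mathscr{P}^k_n$. For $k\geq1$ the inserted $\down\up$ are adjacent, so step (i) of \cref{drawcups} joins them by an undecorated cup with $(l,r)=(k-\frac12,k+\frac12)$. Since cups are built by a greedy neighbour-matching, the existence of this cup does not depend on the order in which the $\down\up$ pairs are matched. For $k=0$ the $\up\up$ pair sits at the leftmost positions. Vertices matched in step (i) are skipped over, and the decorated cups of step (iii) are formed starting from the left. So if the vertices at $\frac12,\frac32$ are not absorbed into undecorated cups, they form a decorated cup $(\frac12,\frac32)$. The point to verify is that neither $\up$ is absorbed: no $\down$ lies to the left of them, and an $\up$ is only ever the right end of an undecorated cup, so they cannot be. Hence the decorated cup exists.

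Then I show the two composites are the identity. The identity $\Phi_k\circ\Psi_k=\mathrm{id}$ holds because deleting the inserted vertices and unshifting returns the original labels. The only subtlety is the first-node adjustment. Both maps fix the parity constraint by acting on the first node only, and the labels at every other node are recovered exactly. Since the parity condition determines the first label uniquely, the first label is recovered too.

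Conversely, take $\la\in\mathscr{P}^k_n$. Its contractible cup forces the labels at $k-\frac12,k+\frac12$ to be $\down\up$ (respectively $\up\up$ when $k=0$). These are exactly the labels $\Psi_k$ reinserts, so $\Psi_k\circ\Phi_k=\mathrm{id}$ by the same argument. I expect the main obstacle to be the parity adjustment at the first node when $k=1$, because then the first node is itself one of the deleted vertices. I would treat $k=1$ separately and check directly that the flip at the new first node is compatible with both composites. I would also check the $k=0$ membership claim carefully against the left-to-right rule for decorated cups.
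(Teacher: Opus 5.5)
Your overall strategy (write down the dilation $\Psi_k$ and check both composites) is what the paper's remark that this lemma ``follows directly from the definitions'' amounts to. Your arguments for $k=0$ and for $k\geq 2$ are correct. The genuine gap is $k=1$. You flag it but leave it unresolved, and your uniform definition of $\Psi_k$ is actually wrong there.

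When $k=1$, the first node of the dilated weight is the inserted $\down$ at $x=\tfrac12$, so your parity step turns the inserted pair into $\up\up$. Then $\Psi_1(\la')$ has a \emph{decorated} cup at $(\tfrac12,\tfrac32)$. It therefore lies in $\mathscr{P}^0_n$ rather than $\mathscr{P}^1_n$, and $\Psi_1\circ\Phi_1\neq\mathrm{id}$. Concretely, write $\la=\down\up\,a\,w$ with $a$ the label at $x=\tfrac52$ and $\hat a$ its flip; you get $\la\mapsto \hat a\,w\mapsto \up\up\,\hat a\,w$. Your sentence ``both maps fix the parity constraint by acting on the first node only'' fails precisely here, because $\Phi_1$ flips the node that sat at $x=\tfrac52$ in $\la$, not the one at $x=\tfrac12$.

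The repair is to perform the parity flip on the first label of $\la'$ \emph{before} shifting and inserting. For $k\geq 2$ this coincides with your rule. For $k=1$ it flips the node at $x=\tfrac52$ of $\Psi_1(\la')$, so the inserted $\down\up$ survives, $\Psi_1(\la')\in\mathscr{P}^1_n$, and each composite flips the same node twice. This needs $\la'$ to be non-empty, i.e.\ $n\geq 3$. For $n=2$ one has $\mathscr{P}^1_2=\emptyset$ while $\mathscr{P}_0$ has one element, so the statement itself fails there.

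You should also say explicitly that for $k\geq 1$ you read the cup in the definition of $\mathscr{P}^k_n$ as undecorated. Your claim ``the contractible cup forces the labels $\down\up$'' relies on this. With the paper's literal wording (any cup at $(\tfrac12,\tfrac32)$), $\Phi_1$ is not injective: $\up\up\,a\,w$ and $\down\up\,\hat a\,w$ both lie in $\mathscr{P}_n$ and both map to $a\,w$. The decorated cup at $(\tfrac12,\tfrac32)$ is precisely the $k=0$ case, and keeping the two cases disjoint is what makes $\Phi_0$ and $\Phi_1$ separately bijective.
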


\begin{lem} 
\label{degree}
 Let $\la, \mu \in  \mathscr{P}^k_{n}$.
We have that $\DP$ is oriented 
if and only if $\underline{\Phi_k(\mu)}\Phi_k(\la)$ is oriented.
If $\la= \mu - \sum_{i=1}^{j}\mq^i$ and for some $1\leq i \leq l$, we have that $(l_{\mq^i},r_{\mq^i})= (k-\frac{1}{2},k+\frac{1}{2})$, then $\DP$ is of degree j and $\underline{\Phi_k(\mu)}\Phi_k(\la)$ is of degree $j-1$. If no such $\mq^i$ exists then both $\DP$ and $\underline{\Phi_k(\mu)}\Phi_k(\la)$ are both of degree j. 
The equivalent statement for the oriented cap diagram $\la\overline{\mu}$ follows likewise.
\end{lem}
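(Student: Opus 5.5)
The approach is to compare the cup diagram $\underline{\mu}$ with $\underline{\Phi_k(\mu)}$ directly. Orientedness and degree are then read off cup by cup using the characterisation in \cref{flipper}: $\underline{\mu}\la$ is oriented of degree $d$ if and only if $\la$ arises from $\mu$ by flipping exactly $d$ anti-clockwise cups of $\underline{\mu}$. The first step is therefore to show that $\Phi_k$ commutes with the cup-diagram construction of \cref{drawcups}. Concretely, for $\mu\in\mathscr{P}^k_n$ we need $\underline{\Phi_k(\mu)}$ to equal $\underline{\mu}$ with the small cup at $(k-\frac12,k+\frac12)$ deleted, the remaining arcs shifted as in the remark following the definition of $\Phi_k$, and decorations preserved.

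For $k\geq 1$ this should be immediate. The small cup is an adjacent $\down\up$ pair, and the algorithm of \cref{drawcups} treats vertices already joined by cups as invisible. So the pairing steps (i)--(iv) run identically on $\mu$ and on $\mu$ with that pair removed. Removing one $\down$ and one $\up$ preserves the parity of the number of $\up$'s, so no reorientation at $x=\frac12$ is needed.

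For $k=0$ the removed cup is a decorated $\up\up$ cup at $(\frac12,\frac32)$. Removing two $\up$'s again preserves parity. A decorated cup starting at the far left does not obstruct any later undecorated or decorated pairing, so the remaining cups, rays and decorations also agree. I would check explicitly that the unique decorated ray, if present, is unaffected.

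Next, take $\la$ with $\underline{\mu}\la$ oriented, so $\la=\mu-\sum_i\mq^i$ with each $\mq^i\in\underline{\mu}$ anti-clockwise in $\mu$. I would distinguish two cases according to whether the contracted cup $\cp$ is among the $\mq^i$.

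If $\cp$ is not among the $\mq^i$, then $\la$ agrees with $\mu$ at the two removed vertices. Hence $\la\in\mathscr{P}^k_n$ and the cup $\cp$ survives in $\underline{\la}$. The flipped cups of $\underline{\Phi_k(\mu)}$ are exactly the images of the $\mq^i$, and $\Phi_k(\la)=\Phi_k(\mu)-\sum_i\Phi_k(\mq^i)$. Both diagrams are therefore oriented of degree $j$.

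If $\cp=\mq^{i}$ for some $i$, then $\la$ has $\up\down$ (respectively $\down\down$ when $k=0$) at the removed vertices. In that situation the hypothesis $\la\in\mathscr{P}^k_n$ must be interpreted so that $\Phi_k(\la)$ simply deletes those vertices. The remaining $j-1$ flips then give $\underline{\Phi_k(\mu)}\Phi_k(\la)$ of degree $j-1$.

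The converse direction reverses this argument. Any orientation of $\underline{\Phi_k(\mu)}$ lifts to one of $\underline{\mu}$ by reinserting the two vertices with the labels coming from $\la$. The local orientation conditions of \cref{oriented or not?} hold at $\cp$ because its endpoints form either an anti-clockwise or a clockwise pair. The cap statement follows by reflection, since the degree of a cap diagram is defined through the corresponding cup diagram.

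I expect the main obstacle to be the $k=0$ compatibility claim. There, decorated cups and rays interact with parity, and the decoration rule in step (iii) of \cref{drawcups}, which pairs $\up\up$ starting from the left, must be shown unaffected by deleting the leftmost decorated cup. I would handle this by noting that after the undecorated pairings, the leftover $\up$'s to the right of position $\frac32$ are paired in the same consecutive manner whether or not the first two are present.
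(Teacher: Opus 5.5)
Your strategy (show that $\Phi_k$ is compatible with the algorithm of \cref{drawcups}, then read orientedness and degree off \cref{flipper}) is the natural way to unpack the paper's ``follows directly from the definitions''. Your $k=0$ argument is fine, and you are right that the degree $j-1$ case is vacuous under the literal hypothesis $\la\in\mathscr{P}^k_n$. But the compatibility step for $k\geq 1$ rests on a false claim. Removing a $\down\up$ pair removes exactly one $\up$, so it \emph{changes} the parity of the number of $\up$'s. Every weight in $\mptn$ has an even number of $\up$'s, so step (iv) of the contraction always flips the label at the new first vertex when $k\geq 1$. Consequently $\underline{\Phi_k(\mu)}$ is not $\underline{\mu}$ with the small cup deleted and decorations preserved: the arc through the new vertex $x=\frac12$ has its decoration toggled. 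This is precisely the exception recorded in \cref{breadth}. For instance, $\mu=\down\down\up\up$ has two nested undecorated cups, whereas $\Phi_2(\mu)=\up\up$ is a single decorated cup. Take $\la=\mu-\mq$ for the outer cup $\mq$. Your reasoning would obtain $\Phi_2(\la)$ by flipping an undecorated cup, giving $\up\down$, which is not even in $\mathscr{P}_2$; the true value is $\Phi_2(\la)=\down\down$. So as written you prove the statement for the wrong map.

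The missing argument is short. First, check that flipping the label at $x=\frac12$ of any weight changes its cup diagram only by toggling the decoration of the arc through $\frac12$. Read $\down$ and $\up$ as open and close brackets. A $\down$ at $\frac12$ matched with $r$ in step (i) becomes an $\up$ which, together with $r$, forms the first two leftover $\up$'s and hence a decorated cup. Conversely, a decorated cup $(\frac12,r)$ becomes an undecorated one, and an undecorated ray at $\frac12$ becomes decorated and vice versa; all other arcs are unaffected. Second, $\la$ also loses exactly one $\up$: its removed pair is $\down\up$, or $\up\down$ in your reinterpreted case. So $\Phi_k(\la)$ receives the same flip at $\frac12$ as $\Phi_k(\mu)$. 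A decoration acts as an orientation-reversing point, so flipping one endpoint label of an arc while toggling that arc's decoration preserves the orientation conditions for both cups and rays. Degree is read at right endpoints of cups (never at $x=\frac12$) and rays contribute nothing, so the degree is unchanged. With this inserted your proof goes through. Note that the genuine subtlety is at $k\geq 1$, not at $k=0$ as you anticipated.
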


\begin{lem}
\label{breadth}
If $\la = \mu - \cp$ for some $\cp\in \underline{\mu}$ with $(l_{\cp},r_{\cp})\neq (k-\frac{1}{2},k+\frac{1}{2})$  then we have $\Phi_k(\la) = \Phi_k(\mu) - \cp'$ where $\cp'\in \underline{\Phi_k(\mu)}$   satisfies 
\begin{center}
\begin{tabular}{ll}
 $b(\cp')=b(\cp)-2$ & if $k < l_{\cp}$ and $\cp$ is decorated, \\
$b(\cp')=b(\cp)-1$ & if $l_{\cp}<k <r_{\cp}$,\\
 $b(\cp')=b(\cp)$ & if $k> r_{\cp}$ or $k < l_{\cp}$ and $\cp$ is undecorated.
\end{tabular}
\end{center}
Furthermore, the decorations of 
  $\cp \in  \mu $  and 
  $\cp'\in \underline{\Phi_k(\mu)}$  
are identical unless      $l_{\cp'} = \frac{1}{2}$ and $k\neq0$. 

In all cases we write $\Phi_k(\cp):=\cp'$.
\end{lem}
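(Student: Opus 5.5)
We compare the cup diagrams $\underline{\mu}$ and $\underline{\Phi_k(\mu)}$ directly through the algorithm of \cref{drawcups}. Throughout, $\mu\in\mathscr{P}^k_n$ carries a distinguished cup $\mq$ with $(l_\mq,r_\mq)=(k-\frac12,k+\frac12)$, or the decorated cup at $(\frac12,\frac32)$ when $k=0$.

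\textbf{Step 1: the cups correspond bijectively.} In the $\down\up$ matching of \cref{drawcups}(i)--(ii), the adjacent pair $\down\up$ at $k\mp\frac12$ can be matched first, since matchings may be done in any order. Once matched, the pair is transparent to all later matchings. Hence deleting these two vertices gives exactly the undecorated cups of $\Phi_k(\mu)$ on the remaining vertices, each shifted by $2$ if it lies to the right of $k$. Steps (iii)--(iv) only see the unmatched vertices, which are unchanged, except possibly the label at $x=\frac12$, which may be reoriented.

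When $k=0$ we delete a leftmost $\up\up$ decorated cup. The remaining unmatched $\up$'s pair up in the same way, with positions shifted by $2$.

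When $k\neq0$ and the first node is reoriented, the strand through $x=\frac12$ changes its decoration status. This accounts for the exception ``unless $l_{\cp'}=\frac12$''. The parity rule for the number of $\up$'s ensures the reoriented vertex is still the left end of the same strand.

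\textbf{Step 2: flipping commutes with contraction.} Let $\la=\mu-\cp$ with $\cp\ne\mq$. Flipping $\cp$ in $\mu$ does not touch the vertices $k\pm\frac12$, so $\la\in\mathscr P^k_n$ by the same matching argument, and $\Phi_k(\la)$ is obtained from $\Phi_k(\mu)$ by flipping the image cup $\cp'$. This step uses the previous lemma on orientedness (\cref{degree}) and the bijectivity of $\Phi_k$.

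\textbf{Step 3: the breadth formula.} This is a case check against the position of $k$.
\begin{itemize}
\item If $k>r_\cp$, then $l_\cp$ and $r_\cp$ are unchanged, so $b(\cp')=b(\cp)$.
\item If $l_\cp<k<r_\cp$ and $\cp$ is undecorated, only $r$ drops by $2$, so $b$ drops by $1$ via $\frac12(r-l+1)$.
\item If $l_\cp<k<r_\cp$ and $\cp$ is decorated, $b=\frac12(r+l)$ also drops by $1$.
\item If $k<l_\cp$, both endpoints drop by $2$. The undecorated breadth is then unchanged, while the decorated breadth drops by $2$.
\end{itemize}

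\textbf{Main obstacle.} The subtle point is the reorientation of the first node in step (iv) of the contraction, because it can convert an undecorated cup or ray at $\frac12$ into a decorated one or vice versa. For such a cup with $l_{\cp'}=\frac12$, one must check that the breadth formula still holds, using the decoration status that applies in each diagram. I would handle this by carefully verifying that $\frac12(r+l)$ and $\frac12(r-l+1)$ coincide when $l=\frac12$. This justifies the stated table regardless of decoration change in that case, since $\frac12(r+\frac12)=\frac12(r-\frac12+1)$.
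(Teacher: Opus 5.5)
Your route is the one the paper intends (it only says the lemma follows directly from the definitions), and most of your bookkeeping is right. The matched $\down\up$ pair at $k\mp\frac12$ is transparent to the rest of the matching. For $k=0$, the deleted $\up\up$ pair at $(\frac12,\frac32)$ is transparent to steps (i)--(iii). Your observation that $\frac12(r+l)=\frac12(r-l+1)$ at $l=\frac12$ is exactly what makes the table insensitive to the decoration change, including the case $k=1$, $l_\cp=\frac52$.

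However, the one genuinely new ingredient is not justified correctly: that reorienting the first node changes the cup diagram only by toggling the decoration of the strand through $\frac12$. The parity rule only decides \emph{whether} to flip; for $k\neq 0$ it always forces a flip, since exactly one $\up$ has been removed. It says nothing about the strand structure. Your claim that only steps (iii)--(iv) are affected is also inaccurate. If the first vertex is a $\down$ matched in step (i) to an $\up$ at $j$, flipping it destroys that step-(i) cup.

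The correct argument is a short case check on the first vertex. Suppose it is $\up$. It is unmatched in step (i), and in step (iii) it either pairs with the next unmatched $\up$, at $j$ say, or it is a decorated ray. Every vertex strictly between $\frac12$ and $j$ is matched within that interval, so after flipping to $\down$ it matches $j$ in step (i). If there is no further unmatched $\up$, it stays unmatched and becomes an undecorated ray. In both subcases the remaining unmatched $\up$'s are paired from the left exactly as before. The case of a $\down$ at the first vertex is the reverse of this.

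With this in place, note also in Step 2 that the first node is reoriented in both $\mu$ and $\la$ or in neither. Both have an even number of $\up$'s and lose the same pair of labels. This is what guarantees that $\Phi_k(\la)$ and $\Phi_k(\mu)$ differ exactly at the endpoints of $\cp'$. Once you argue that way, the appeal to the degree lemma and to bijectivity is unnecessary.
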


 We now extend the contraction map $\Phi_k$ to homomorphisms for the Hecke category. We will use the same notation for both contraction maps. 

\begin{thm}\cite[Theorem 6.8]{My1}
\label{dilation}
Let $\Bbbk$ be a commutative integral domain and let $i \in \Bbbk$ be a square root of $-1$ and set $1_k =  \sum_{\mu \in  \mathscr{P}^k_{n}} 1_\mu$.
We define the maps $$\Phi_k : 1_k\mathcal{H}_{(D_n, A_{n-1})}1_k \to \mathcal{H}_{ (D_{n-2}, A_{n-3}) }$$on the generators as follows.
Suppose $\la, \mu \in \mathscr{P}^k_{n} $, with $\la = \mu - \cp$, for some $\cp \in \underline{\mu}$ with $(l_{\cp},r_{\cp})\neq (k-\frac{1}{2},k+\frac{1}{2})$ , we define $\Phi_k({\sf 1}_\mu)= {\sf 1}_{{\Phi_k(\mu)}}$
and 
$$
\Phi_k(D^\la_\mu)= \left\{ \begin{array}{ll} -D^{\Phi_k(\la)}_{\Phi_k(\mu)} & \mbox{ if $\cp$ is decorated, $k=0,1$ and $k<l_{\cp}$} \\
i \cdot 
D^{\Phi_k(\la)}_{\Phi_k(\mu)}	& \mbox{ if $l_{\cp}<k <r_{\cp}$ and $\left\lceil l_p\right\rceil-k \equiv 0\pmod{2}$}  
\\	
(-i) \cdot 
D^{\Phi_k(\la)}_{\Phi_k(\mu)}	& \mbox{ if $l_{\cp}<k < r_{\cp}$ and $\left\lceil l_p\right\rceil-k \equiv 1\pmod{2}$}\\
D^{\Phi_k(\la)}_{\Phi_k(\mu)} & \mbox{ else.}
 \end{array} \right.
$$Furthermore, we define $\Phi_k(D_\la^\mu) = \Phi_k((D_\mu^\la)^*) = (\Phi_k(D_\mu^\la))^*$. Then $\Phi_k$ extends to an isomorphism of graded $\Bbbk$-algebras. 
\end{thm}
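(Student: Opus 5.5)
The statement is quoted from \cite[Theorem 6.8]{My1}, so the proof I have in mind follows the standard strategy for such contraction isomorphisms, as in \cite{ChrisDyckPaper}. The plan is to use the presentation of \cref{presentation} on both sides. The idempotent-truncated algebra $1_k\mathcal{H}_{(D_n,A_{n-1})}1_k$ is generated by ${\sf 1}_\mu$ and $D^\la_\mu, D_\la^\mu$ with $\la,\mu\in\mathscr{P}^k_n$, together with the generators that flip the contractible cup at $(k-\frac12,k+\frac12)$. Those latter generators leave $\mathscr{P}^k_n$, so they occur only inside products that return to it. One must argue that such products can be rewritten, using the adjacent and self-dual relations, in terms of generators that stay inside $\mathscr{P}^k_n$. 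The cellular basis of \cref{cellular basis} makes this tractable: basis elements $D^\mu_\la D^\la_\nu$ with $\mu,\nu\in\mathscr{P}^k_n$ are indexed by oriented $\underline{\mu}\la$, $\underline{\nu}\la$. I would therefore define $\Phi_k$ on that basis and check that it agrees with the stated generator formula.

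The first step is to show $\Phi_k$ is well defined on generators and degree preserving. By \cref{breadth}, $\Phi_k(\la)=\Phi_k(\mu)-\Phi_k(\cp)$, so the target is a genuine generator, and \cref{degree} gives the grading. The second step is to verify every relation of \cref{presentation}.

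\textbf{Idempotent relations.} These are immediate.

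\textbf{Commuting, non-commuting and doubly non-commuting relations.} Both sides involve the same set of cups, up to the bijection $\cp\mapsto\Phi_k(\cp)$. So the task is to check that the scalar factors ($\pm1$, $\pm i$) agree on the two sides. Each factor depends only on whether the cup straddles $k$, the parity of $\lceil l_\cp\rceil-k$, and whether the cup is decorated with $k\le 1<l_\cp$. For the two paths around a commuting square, the same multiset of cups is flipped, so the products of scalars agree. For non-commuting pairs $\mq\prec\cp$, I would compare the position of $k$ relative to $\mq^1,\mq^2$ against its position relative to $\cp,\mq$, case by case from \cref{adj lem2}. The key check is that $i\cdot(-i)=1$ and $i\cdot i=-1$ distribute consistently.

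\textbf{Adjacent relation.} The sign $(-1)^{b(\langle\cp\cup\ot\rangle)-b(\ot)}$ changes under contraction according to the breadth shifts of \cref{breadth}. The $\pm i$ factors should compensate exactly: a cup straddling $k$ loses breadth $1$, which flips the parity, and this is matched by the product of the two $\pm i$ factors.

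\textbf{Self-dual relation.} I expect this to be the main obstacle. Here $D^\la_\mu D_\la^\mu$ maps to $(\pm i)(\mp i)$ or $(\pm 1)^2$ times the contracted product, which is $+1$ times it. On the right-hand side, however, the sum in the $n$-vertex algebra contains one extra term: the term coming from the contractible cup $\mq_0$ at $(k-\frac12,k+\frac12)$ itself, whenever $\mq_0$ is covered by $\cp$ or adjacent to it. That term does not lie in $1_k\mathcal{H}1_k$ as a product of in-block generators. I would show that the offending term cancels or is rewritten via the adjacent relation \eqref{adjacentcup} applied to $\mq_0$. Concretely, I would express $D^\la_{\la-\mq_0}D^{\la-\mq_0}_\la$ through cups that survive contraction, then check that the coefficients $2(-1)^{b(\mq)}$ transform correctly, using $b(\Phi_k(\mq))=b(\mq)-1$ for cups straddling $k$ and $b-2$ for decorated cups with $k=0,1$. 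The factor $i^2=-1$ is exactly what absorbs these breadth parity changes. This is presumably why $\Bbbk$ must contain $i$. I would handle the case $k=0$, with decorated contractible cups, separately, using \eqref{selfdualrelD}.

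The final step is bijectivity. Once $\Phi_k$ is a homomorphism, it sends the cellular basis of $1_k\mathcal{H}1_k$, namely the elements $D^\mu_\la D^\la_\nu$ with $\mu,\nu\in\mathscr{P}^k_n$, to nonzero scalar multiples of the cellular basis of $\mathcal{H}_{(D_{n-2},A_{n-3})}$. This works because $\Phi_k$ is a bijection on weights and \cref{degree} matches orientedness. Since the scalars are units, $\Phi_k$ is an isomorphism of graded $\Bbbk$-algebras.
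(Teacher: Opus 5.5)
\textbf{There is a genuine gap: the theorem as stated cannot hold with domain $1_k\mathcal{H}_{(D_n,A_{n-1})}1_k$, for dimension reasons.} The two steps of your plan that would have to carry it are exactly the ones that fail. These are the claim that products passing through weights outside $\mathscr{P}^k_n$ can be rewritten in terms of in-block generators, and the final bijectivity count.

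By \cref{cellular basis}, $1_k\mathcal{H}1_k$ has as basis all $D^\mu_\la D^\la_\nu$ with $\mu,\nu\in\mathscr{P}^k_n$ and $\underline{\mu}\la$, $\underline{\nu}\la$ oriented. The breadth-one cup $\mq_0$ at $(k-\frac12,k+\frac12)$ lies in both $\underline{\mu}$ and $\underline{\nu}$, so $\la$ may orient it clockwise. In that case $\la\notin\mathscr{P}^k_n$, $\Phi_k(\la)$ is undefined, and \cref{degree} does not apply. The simplest such element is $D^\mu_{\mu-\mq_0}D^{\mu-\mq_0}_\mu$. Flipping $\mq_0$ back is a bijection from these elements onto the in-block ones that raises degree by $2$. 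Hence $\dim_q 1_k\mathcal{H}_{(D_n,A_{n-1})}1_k=(1+q^2)\dim_q\mathcal{H}_{(D_{n-2},A_{n-3})}$; for $n=4$, $k=0$ the dimensions are $10$ and $5$. This is the Hecke-side counterpart of the small circle at $k$ being allowed to be oriented clockwise in the arc algebra.

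The domain that works is the subalgebra spanned by the $D^\mu_\la D^\la_\nu$ with $\la,\mu,\nu\in\mathscr{P}^k_n$, equivalently the subalgebra generated by the in-block generators. This matches $\mathbb{D}^k_n$ in \cref{KContraction homomorphism}, where the middle weight is also required to lie in $\mathscr{P}^k_n$. Even for that subalgebra, checking relations of $\mathcal{H}_{(D_n,A_{n-1})}$ among in-block generators does not define a map out of it, because you have no presentation of it by those generators. A workable order is the reverse one:
\begin{enumerate}
\item define the dilation $\mathcal{H}_{(D_{n-2},A_{n-3})}\to\mathcal{H}_{(D_n,A_{n-1})}$ on generators;
\item verify the relations of the smaller algebra on the images;
\item read off injectivity and the image from the two cellular bases;
\item take $\Phi_k$ to be the inverse.
\end{enumerate}

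\textbf{Your self-dual step also misplaces the work.} First, the ``extra term'' you plan to cancel does not occur. The sums in \eqref{selfdualrelD} and \eqref{selfdualrelA} run over cups covering or doubly covering $\cp$ and over cups adjacent to $\cp$. Having breadth one, $\mq_0$ is either undecorated or leftmost, so it neither covers nor doubly covers $\cp$. It also cannot share a vertex with $\cp$ when $\la,\mu\in\mathscr{P}^k_n$.

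Second, your claim that $D^\la_\mu D^\mu_\la$ is rescaled by $+1$ is wrong. Since $\Phi_k(D^\mu_\la)=\Phi_k(D^\la_\mu)^*$ and $*$ is $\Bbbk$-linear, both factors carry the same scalar $c$. So every product $D\,D^*$ in the relation, the left-hand side included, is rescaled by $c^2$. By \cref{breadth}, $c^2=-1$ exactly when the relevant cup straddles $k$, which is exactly when its breadth drops by an odd amount. That one observation is the whole sign check. With your $+1$ on the left, the image of the relation would be off by an overall sign whenever $\cp$ straddles $k$, because $b(\Phi_k(\cp))=b(\cp)-1$ flips the prefactor $(-1)^{b(\cp)-1}$.
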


\begin{defn}
Let $\la, \mu \in \mptn$ with $\la = \mu-\cp$ for some $\cp \in \underline{\mu}$. We say that $\underline{\mu}\la$ is \textsf{incontractible} if there doesn't exist $k\in \ZZ_{\geq0}$ such that $\la, \mu \in\mathscr{P}^k_{n}$.
\end{defn}

\begin{rmk}
It follows from the definition that $\underline{\mu}\la$ being incontractible is equivalent to $\cp$ being the unique cup in $\underline{\mu}$ with $b(\cp)=1$.
\end{rmk}

Finally, we extend the contraction map to the generalised arc algebra as well. We begin with a small lemma that we use immediately.
 
\begin{lem}[A local idempotent]
\label{idempot lem}
Any anti-clockwise oriented circle which intersects the weight at exactly two points acts as a local idempotent in the following sense: Applying a surgery to this circle is equivalent to deleting the circle (see \cref{local idempot} for an example of this in action). We call a circle that intersects the weight at exactly two points \textsf{small}.
\end{lem}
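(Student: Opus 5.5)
Let $C$ be a small anti-clockwise circle in a stacked circle diagram $D$, meeting the weight at exactly two vertices, say $[l,y]$ and $[r,y]$. The plan is to use the explicit rules \eqref{mergeeq}, \eqref{spliteq} and \eqref{reconeq} for a surgery that involves $C$, and to check that the output equals $D$ with $C$ erased. Since $C$ meets the weight in only two points, it is one cup glued to one cap on the same pair of vertices. So a surgery touching $C$ is an $\ps$-surgery whose cup or cap is one of these two arcs, and the other arc $\ps$ connects to some other component $C'$. A reconnect is impossible because $C$ is a circle, not a line. A split is impossible because $C$ and $C'$ are different components. Hence every surgery involving $C$ is a merge of $C$ with $C'$.

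Next I apply \eqref{mergeeq} with $C_1,C_0$ equal to $C$ and $C'$ in some order. When one merged component is anti-clockwise, the rule orients the merged component $C$ like the other one, up to the sign factors ${\sf sign}$. I then check that these signs are trivial. The tag of $C$ lies at $r$. Because $C$ is a small anti-clockwise circle, its two labels at $l$ and $r$ are $\up$ and $\down$ in the undecorated case, with the label at $r$ being $\up$; in the decorated case they are both $\up$ or both $\down$. After the merge, the arcs of $C$ simply become vertical segments joining $C'$ through the positions $l$ and $r$. The merged component is therefore isotopic to $C'$ with two extra straight segments, has the same tag as $C'$ (or a tag at $r$ carrying the same orientation), and keeps all other labels unchanged.

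The main obstacle is the sign bookkeeping. I must check that ${\sf sign}_D$ and ${\sf sign}_{D'}$ at $[l_\ps,y]$ agree, so that the coefficient is $+1$. I must also check that the decoration, if $\ps$ was decorated, cancels mod two, given that the replacing vertical lines are undecorated. To handle this I would run through the four cases (decorated or not, and which of $C,C'$ is $C_1$) using the two-vertex pictures. The definition of orientation in \cref{something about orientation} makes the parity bookkeeping immediate. Finally, deleting the straight vertical segments through $l,r$ gives precisely the diagram $D$ with $C$ removed and with the orientation of $C'$ unchanged, which proves the lemma.
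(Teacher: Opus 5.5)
Your proposal is correct and is essentially the paper's own argument: a surgery touching a small circle can only merge it with a distinct component $C'$, the merge rule \eqref{mergeeq} orients the result like $C'$, and the signs cancel because $C'$ keeps its tag and its labels. One small precision: a sign factor only appears when $C'$ is clockwise, and it is evaluated at $C'$'s own vertex $[l_{\ps},1-y]$, not at the small circle's vertex $[l_{\ps},y]$ (there ${\sf sign}_D$ and ${\sf sign}_{D'}$ can genuinely differ); at $[l_{\ps},1-y]$ the two signs agree for exactly the reason you give.
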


\begin{figure}[ht!]
$$\begin{minipage}{5cm}
 
\end{minipage}
$$
 \caption{An example of the local idempotent multiplication. Note that the surgery performed can be thought of as simply deleting the highlighted anti-clockwise circles (with their weights) and `stretching' the opposite component across the centre of the diagram.}
 \label{local idempot}
\end{figure}

\begin{proof}
By assumption, the circle only intersects weights at a given fixed level (either $0$ or $1$); therefore, any surgery performed at this point will be a merge. Applying \cref{mergeeq}, the orientation of the merged circle is clockwise if and only if the non-small circle was oriented clockwise. Since the circle only intersects the weights twice, the choice of tag for the other component can remain constant, so any signs cancel out.
\end{proof}

 \begin{prop}
 \label{KContraction homomorphism}
 Let $\Bbbk$ be a commutative integral domain. Denote by $\mathbb{D}_{n}^k$ the subalgebra of $\mathbb{D}_{n}$ containing only the diagrams $\underline{\la}\nu\overline{\mu}$ such that $\la, \mu, \nu \in \mathscr{P}^k_{n}$. Then the map $\tilde\Phi_k:\mathbb{D}_{n}^k \rightarrow \mathbb{D}_{n-2}$ defined on generalised arc diagrams by 
 $$\tilde\Phi_k(\underline{\la}\nu\overline{\mu})=\Phi_k(\underline{\la})\Phi_k(\nu)\Phi_k(\overline{\mu})
 $$
 is an isomorphism of graded $\Bbbk$-algebras.
   \end{prop}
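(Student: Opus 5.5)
The plan is to split the statement into three parts: $\tilde\Phi_k$ is a bijection on bases, it preserves degree, and it is multiplicative. The last part carries the content.

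\textbf{Bijection and grading.} Since $\Phi_k$ is a bijection on weights, and on cup and cap diagrams it simply deletes the small arc at $(k-\frac12,k+\frac12)$ (or the small decorated arc at $(\frac12,\frac32)$ when $k=0$), \cref{degree} applied to $\underline{\la}\nu$ and to $\nu\overline{\mu}$ does two things. It shows that $\underline{\la}\nu\overline{\mu}\in\mathbb{B}_n$ with $\la,\mu,\nu\in\mathscr{P}^k_n$ if and only if its image lies in $\mathbb{B}_{n-2}$. It also shows that the image is again an oriented circle diagram. Hence $\tilde\Phi_k$ maps the basis of $\mathbb{D}^k_n$ bijectively onto $\mathbb{B}_{n-2}$.

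The degree is handled by the small circle formed in $\underline{\la}\nu\overline{\mu}$ by the cup and cap at the contracted position. If this circle is clockwise it contributes exactly one to the degree, and that contribution is lost under $\Phi_k$. In that case the two degrees differ by one, so I expect the correct statement to restrict $\mathbb{D}_n^k$ to those diagrams in which the small circle at position $k$ is anti-clockwise (equivalently $\nu\in\mathscr P^k_n$ with the arc oriented as in $\mu$). I will read the hypothesis $\nu\in\mathscr{P}^k_n$ this way, so that degrees agree and the restricted span is still closed under multiplication.

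\textbf{Multiplicativity.} Take $\underline{\la}\nu\overline{\alpha}$ and $\underline{\alpha}\gamma\overline{\mu}$ with all weights in $\mathscr{P}^k_n$, and stack them into $D$. By \cref{myexcuse,admissible} the result of surgery is independent of the admissible order chosen, so I will choose the order. The first surgery is at the small cup $\ps=(k-\frac12,k+\frac12)\in\underline{\alpha}$. This cup is not covered, so the surgery is admissible. The cup in $\underline{\la}$ at that position and the cap in $\overline{\alpha}$ form a small anti-clockwise circle, and so do the corresponding pair in the upper layer. By \cref{idempot lem}, merging them yields a single small anti-clockwise component spanning both layers, with no sign and no change elsewhere.

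The remaining surgeries are then in bijection with the surgeries computing $\tilde\Phi_k(\underline{\la}\nu\overline{\alpha})\tilde\Phi_k(\underline{\alpha}\gamma\overline{\mu})$. Merge, split or reconnect type is unchanged, because deleting a small component does not alter how the other components connect.

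\textbf{Main obstacle: the signs.} The coefficients $\zeta_i$ depend on the values ${\sf sign}_D[i,y]$, hence on tags (rightmost vertices) and on whether the label at a vertex agrees with the component's orientation. Deleting two vertices and shifting later ones to the left by two preserves the rightmost vertex of every component other than the deleted small one, and preserves each component's orientation. So the tags and ${\sf sign}$ values correspond exactly.

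The delicate case is $k=0$, or more generally when contraction alters the first node. Here the remark after the definition of $\Phi_k$ reorients the node at $x=\frac12$ to restore parity, which can toggle decorations, as in the last sentence of \cref{breadth}. I would handle this by checking, with the orientability lemma, that each decoration parity on the affected component is preserved. The effect is to flip both labels of one whole line or component consistently, so orientations relative to tags, and therefore every sign, are unchanged. If the split rule \eqref{spliteq} contains decoration-dependent factors, I would verify them case by case on the minimal configurations.

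\textbf{Conclusion.} Finally, a linear bijection between bases that respects multiplication and degree is a graded algebra isomorphism.
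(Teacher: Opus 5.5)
\textbf{The gap: the order of surgeries.} Your multiplicativity argument rests on performing the surgery at the small cup $\ps=(k-\frac12,k+\frac12)\in\underline{\alpha}$ first, on the grounds that it ``is not covered, so the surgery is admissible''. That claim is false in general, for three reasons.
\begin{itemize}
\item \emph{Nesting.} The position $k$ can lie inside larger cups of $\underline{\alpha}$. For example, take $\alpha=\down\down\up\up$ and $k=2$: the cup $(\frac32,\frac52)$ is covered by $(\frac12,\frac72)$. In the middle section, the small cap of $\overline{\alpha}$ and the small cup of $\underline{\alpha}$ are then separated by the larger cap and cup. No surgery between them is possible until the outer pairs have been treated.
\item \emph{Decorated arcs to the right.} Even when the small cup is outermost, replacing it by two vertical lines stops every decorated arc to its right from being isotoped to the left boundary. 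For example, take $\alpha=\down\up\up\up\up$ and $k=1$, which has the decorated cup $(\frac52,\frac72)$. The intermediate diagram is then inadmissible in the sense of \cref{admissible}.
\item \emph{The case $k=0$.} Here the small cup is the decorated cup at $(\frac12,\frac32)$. It is doubly covered by every other cup of $\underline{\alpha}$, so it must be treated after the cups to its right. Compare \cref{KhovMultEg0}, where doing $(\frac12,\frac32)$ first is explicitly inadmissible.
\end{itemize}
So your claimed bijection between the ``remaining surgeries'' and those of the contracted product starts from a move that is not allowed.

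\textbf{The repair.} Reverse the order, which is exactly the paper's proof: do every other surgery first and leave the position-$k$ pair until the very end. Until then, the two small anti-clockwise circles are inert components. These are the cup of $\underline{\la}$ with the cap of $\overline{\alpha}$, and the cup of $\underline{\alpha}$ with the cap of $\overline{\mu}$. They block neither planarity nor admissibility and change no tag. Hence each earlier surgery matches the corresponding one in $\mathbb{D}_{n-2}$; vertices to the right of $k$ shift by $2$, which preserves the parity of $\lfloor l_\ps\rfloor$. At the end, the position-$k$ pair is the only one left in the middle. The final surgery merges two small anti-clockwise circles, which by \cref{idempot lem} has coefficient $1$ and keeps the circle anti-clockwise, so the output stays in $\mathbb{D}^k_n$.

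Your bijection on bases via \cref{degree} and your tag bookkeeping are otherwise fine. Two smaller corrections:
\begin{itemize}
\item \emph{Degrees.} Your worry is unfounded. The hypothesis $\nu\in\mathscr{P}^k_n$ means $\underline{\nu}$ contains the cup at position $k$. This forces the labels $\down\up$ there (resp.\ $\up\up$ when $k=0$), so the small circle is automatically anti-clockwise and no extra restriction is needed.
\item \emph{First-node reorientation.} The reorientation of the first node, with the resulting toggling of decorations (\cref{breadth}), happens for $k\neq 0$, not for $k=0$. It also does not leave ``every sign'' unchanged. Flipping the label at $x=\frac12$ flips ${\sf sign}_D[\frac12,y]$ on the component through that vertex, whose tag lies elsewhere or which is a line. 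It is the simultaneous change in decoration of that arc that has to compensate, so this needs an actual check rather than the consistent-flip heuristic.
\end{itemize}
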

   
 \begin{proof}
 Our assumption that $\la, \mu, \nu \in \mathscr{P}^k_{n}$ implies that the diagram $\underline{\la}\nu\overline{\mu}$ has an anti-clockwise circle that intersects weights at only the points $k, k+1$. When multiplying diagrams, we can choose to perform the surgery procedure, so as to leave this circle until the final step. This final surgery acts as a local idempotent as in \cref{idempot lem}. The result then follows.
  \end{proof}

  \section{Isomorphism between the Hecke category and Khovanov arc algebras}
  \label{TheActualIso}

  We now utilise our combinatorial presentation of $\mathcal{H}_{(D_n, A_{n-1})}$ to prove Theorem A. That is, we construct a graded $\mathbb{Z}[q,q^{-1}]$-algebra isomorphism to $\mathbb{D}_n$, over any field that contains a square root of $1$.
    
\subsection{The explicit isomorphism}

As we will see, the isomorphism in \cref{THE isomorphism} is remarkably simple: it maps the degree zero and one generators of $\mathcal{H}_{(D_n, A_{n-1})}$ to the corresponding generators of $\mathbb{D}_n$, up to a constant. We begin this section by defining this constant.

\begin{defn}\label{suppminusdefn}
Let $\la, \mu \in \mptn$ be such that $\la = \mu-\cp$. Define $\ka(\la,\mu)\coloneqq \frac{1}{2}(l_\cp +r_\cp)$. See \cref{sgn examples} for a simple example.
\end{defn}

\begin{figure}[ht!]
     $$   \begin{tikzpicture} [scale=1]

		   \path (0,6.5) coordinate (origin); 
		\path (origin)--++(0.5,0.5) coordinate (origin2);  
	 	\draw(origin2)--++(0:4.5); 
		\foreach \i in {1,2,3,4,5,...,8}
		{
			\path (origin2)--++(0:0.5*\i)--++(-90:0.00) coordinate (c\i); 
			  }
		
		\foreach \i in {1,2,3,4,5,...,16}
		{
			\path (origin2)--++(0:0.25*\i) --++(-90:0.5) coordinate (b\i); 
			\path (origin2)--++(0:0.25*\i) --++(-90:0.9) coordinate (d\i); 
		}
		\path(c4) --++(-90:0.13) node  {  \color{magenta} $  \up   $} ;
		\path(c2) --++(90:-0.13) node  {  \color{black} $  \up   $} ;
		\path(c3) --++(90:-0.13) node  { \color{magenta}$  \up   $} ;
		\path(c1) --++(90:0.13) node  {  \color{black}$  \down   $} ;
		\path(c6) --++(90:0.13) node  { \color{black}$  \down   $} ;
		\path(c5) --++(-90:0.13) node  {  \color{cyan}$  \up   $} ;
		\path(c7) --++(-90:0.13) node  {  \color{black}$  \up   $} ;
		\path(c8) --++(-90:0.13) node  { \color{cyan}$  \up   $} ;

		\draw[ thick, cyan](c8) to [out=-90,in=0] (d13) to [out=180,in=-90] (c5); 
		\draw[ thick, black](c7) to [out=-90,in=0] (b13) to [out=180,in=-90] (c6); 
		\draw[ thick, magenta](c4) to [out=-90,in=0] (b7) to [out=180,in=-90] (c3); 
		\draw[   black, thick](c2) to [out=-90,in=0] (b3) to [out=180,in=-90] (c1);

		\draw[very thick,  fill=black](b7) circle (2pt);
		\draw[very thick,  fill=black](d13) circle (2pt);

		\end{tikzpicture}$$

    \caption{In the cup diagram, $\underline{\mu}$, pictured the cups we have the that $\ka(\mu-\mq,\mu)=3$ and $\ka(\mu-\cp,\mu)=6$.}
    \label{sgn examples}
    \end{figure}

\begin{thm}
\label{THE isomorphism}
\color{black}
Let $\Bbbk$ be an integral domain containing $i \in \Bbbk$ such that $i^2 = -1$. 
We define a map $\Psi: \mathcal{H}_{(D_n, A_{n-1})} \rightarrow \mathbb{D}_{n}$ on generators as follows: if $\la = \mu - \cp$, we set$$\Psi(1_\la) = \underline{\la}\la\overline{\la}, \qquad \Psi(D_\mu^{\la}) = i^{\ka(\la,\mu)}\underline{\la}\la\overline{\mu}, \qquad \Psi(D_\la^{\mu}) = i^{\ka(\la,\mu)}\underline{\mu}\la\overline{\la},$$and extend $\Psi$ linearly. 

Then $\Psi$ is a $\mathbb{Z}$-graded isomorphism of $\Bbbk$-algebras.
\end{thm}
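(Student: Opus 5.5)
Since $\mathcal{H}_{(D_n,A_{n-1})}$ is given by the generators and relations of \cref{presentation}, I will define $\Psi$ on generators and check that every defining relation holds in $\mathbb{D}_n$. This gives a well-defined algebra homomorphism. Then I will show it is bijective by comparing bases.

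\emph{Gradings and idempotents.} The generator $D^\la_\mu$ has degree one when $\la=\mu-\cp$. The oriented circle diagram $\underline{\la}\la\overline{\mu}$ also has degree one: the cap diagram $\la\overline{\mu}$ has exactly one clockwise cap, namely the flipped cup $\cp$, and $\underline{\la}\la$ has degree zero. So $\Psi$ preserves degree. The idempotent relations \eqref{rel1} hold because the surgery product with $\underline{\la}\la\overline{\la}$ has only anti-clockwise circles. By \cref{idempot lem}, applied repeatedly, it acts as the identity on diagrams with the matching cup or cap diagram, and it gives zero otherwise by \eqref{multeq}.

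\emph{Degree-two relations.} Every remaining relation (commuting, non-commuting, doubly non-commuting, self-dual, adjacent) is a quadratic identity between products of two degree-one generators. I will verify each one by computing both sides with the surgery rules \eqref{mergeeq}, \eqref{spliteq} and \eqref{reconeq}, keeping track of the powers $i^{\ka}$. To make this a finite computation, I will first use contraction. The diagram in \cref{dilation} commutes with $\Psi$ up to the scalars appearing there, and $\tilde\Phi_k$ is an isomorphism by \cref{KContraction homomorphism}. So whenever all weights involved in a relation are contractible at a common $k$, the relation in $\mathbb{D}_n$ follows from the corresponding relation in $\mathbb{D}_{n-2}$. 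For this I must check that the scalars $\pm 1,\pm i$ in \cref{dilation} match the change in $i^{\ka}$ under $\Phi_k$. Here $\ka=\tfrac12(l_\cp+r_\cp)$ shifts by $0$, $1$ or $2$ according to the position of $k$, and \cref{breadth} records how breadths change. I expect this to be a routine parity bookkeeping.

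By induction on $n$, it then suffices to check the incontractible configurations, namely the minimal pictures of \cref{adj lem0,adj lem1,adj lem2} on at most four vertices, plus the commuting pairs on small numbers of vertices. For the commuting relations, both sides are the single diagram $\underline{\mu-\cp-\mq}(\cdot)\overline{\mu}$ reached by surgeries on disjoint circles, and the exponents $\ka$ add identically on each side. For the non-commuting and adjacent relations, I will compute explicitly the merge or split that occurs. The sign $(-1)^{b(\langle\cp\cup\ot\rangle)-b(\ot)}$ should come out as $i^{\ka(\cdot)+\ka(\cdot)}$ on one side versus the other, combined with the signs ${\sf sign}_D$ at the tags.

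\emph{Main obstacle.} I expect the self-dual relations \eqref{selfdualrelD} and \eqref{selfdualrelA} to be the hardest step. The product $\Psi(D^\la_\mu)\Psi(D_\la^\mu)$ is a split surgery producing a sum of orientations, and I have to match each term, with its coefficient $2$ or $1$ and sign $(-1)^{b}$, against $i^{2\ka}=(-1)^{\ka}$ and the split signs. Contraction does not remove the covering cups $\mq$ in the sum. To handle these, I will first treat the case where $\cp$ is uncovered, where the sum has at most one or two terms. I will then argue that each covering $\mq$ contributes to the split identically, via the nesting structure, by induction on the number of covering cups.

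\emph{Bijectivity.} Surjectivity follows from \cref{generated by in Khov}, since every generator of $\mathbb{D}_n$ is hit up to a unit. For injectivity, I will compare dimensions in each graded piece. The basis \eqref{basis} of \cref{cellular basis} is indexed by triples $(\mu,\la,\nu)$ with $\underline{\mu}\la$ and $\underline{\nu}\la$ oriented. The basis $\mathbb{B}_n$ is indexed by the same triples, with matching degrees. Hence $\Psi$ is a surjection between free $\Bbbk$-modules of equal finite graded rank, and therefore an isomorphism. Over an integral domain I will argue this after passing to the fraction field, or more directly by observing that $\Psi(D^\mu_\la D^\la_\nu)$ equals a unit times $\underline{\mu}\la\overline{\nu}$ plus lower terms.
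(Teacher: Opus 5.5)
Your architecture matches the paper's. You check each relation of \cref{presentation} by surgery after reducing to incontractible configurations, then conclude from \cref{generated by in Khov} and equality of graded ranks. The problem is the reduction step, which rests on a false premise. $\Psi$ does \emph{not} intertwine $\Phi_k$ of \cref{dilation} with $\tilde\Phi_k$ of \cref{KContraction homomorphism} generator by generator, so the ``routine parity bookkeeping'' you plan would fail.

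If $\cp$ is undecorated and $k<l_\cp$, then $\ka$ drops by $2$ and $i^{\ka}$ changes by $-1$, whereas \cref{dilation} has scalar $1$. If $l_\cp<k<r_\cp$ with $\lceil l_\cp\rceil-k$ odd, then $\ka$ drops by $1$ but the scalar is $-i$. Concretely, take $\mu=\down\up\down\up$, $\cp=(\tfrac52,\tfrac72)$ and $k=1$. Then $\Phi_1(\mu)=\up\up$ and $\Phi_1(\la)=\down\down$, and $\tilde\Phi_1\Psi(D^\la_\mu)=i^{3}\,\underline{\Phi_1(\la)}\Phi_1(\la)\overline{\Phi_1(\mu)}=-\Psi\Phi_1(D^\la_\mu)$.

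What you actually need is weaker: for each defining relation, the product of these $\pm1$ discrepancies must be the same on every monomial. For the self-dual relations this is automatic, since each monomial is a generator times its dual and the discrepancy squares to $1$. For the non-commuting, doubly non-commuting and especially the adjacency relation it is a real case check. That check includes the case where a vertex lands at $x=\tfrac12$ and decorations change. The paper avoids \cref{dilation} entirely. It strips off the powers of $i$ (e.g.\ \eqref{sign non-com}), and proves the remaining signs in each $\mathbb{D}_n$-identity are contraction invariant (\cref{signs}, and the mod $8$ computations of \cref{signsanothergo,signsanothergo222,signsforthefinaltime,signsanothergo33}). It then contracts only with $\tilde\Phi_k$.

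Contraction also does not bring you down to the four-vertex pictures of \cref{adj lem0,adj lem1,adj lem2}. The incontractible configurations are the unbounded families of \cref{Incontractible cup diagrams}, e.g.\ $\mu=(1,2,\dots,c^k)$ or $(1,2,\dots,c)$. This is because nested covering cups and chains of decorated doubly covering cups have breadth greater than $1$ and cannot be contracted. In the non-self-dual relations these extra cups become small anticlockwise circles that \cref{idempot lem} deletes. They still place the core cups at positions depending on $c$ and $k$, and the split signs depend on those positions, so the computation must be done in that generality.

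In the self-dual relations the covering cups do not ``contribute identically''. Each term $(\underline{\la}\nu_j\overline{\nu_j})(\underline{\nu_j}\nu_j\overline{\la})$ evaluates to a signed difference $\pm(D_a-D_b)$ of diagrams with a single clockwise small circle. The outermost term collapses to one $D$ because a line cannot be oriented clockwise. The whole sum then telescopes onto the left-hand sides $-(D_1+D_2)$ and $(-1)^k(D_{k+1}+D_{k-1})$ of \cref{selfdual1,selfdual2}. Configurations with different numbers of covering cups are not related by any contraction, so your induction on the number of covering cups has no map to run along. You would need to supply such a mechanism, or carry out this explicit telescoping computation.
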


\begin{rmk}
Before proceeding to the proof of \cref{THE isomorphism}, we'll take a very brief pause to talk about the role played by the constant $\ka(\la,\mu)$ above. 
When comparing the results of \cref{dilation} and \cref{KContraction homomorphism}, one will notice that the contraction map from $\mathcal{H}_{(D_n,A_{n-1})}$ involves powers of $i$ whereas the equivalent map from $\mathbb{D}_n$ does not. 
This is what the constant $\ka(\la,\mu)$ accounts for; it is some measure of how close to being incontractible (or not) degree one elements are.

Given the complicated nature of the signs in the surgery procedure of (and therefore multiplication inside) $\mathbb{D}_n$, it is perhaps reassuring to see this information restricted to a single constant when considering the map $\Psi$.
\end{rmk}

\subsection{Proof of the isomorphism}
The remainder of this paper is devoted to the proof of \cref{THE isomorphism}.

The map $\Psi$ maps the complete set of degree $0$ and $1$ elements of $\mathcal{H}_{(D_n,A_{n-1})}$ to the complete set of degree $0$ and $1$ elements in $\mathbb{D}_n$. We will first prove that $\Psi$ is a $\mathbb{Z}$-graded homomorphism of $\Bbbk$-algebras, by checking that the relations listed in \cref{presentation} also hold in the image of $\Psi$.

For the sake of page management, we have chosen to picture all diagrams in what follows, as if they were in the smallest $\mathbb{D}_{n}$ possible. The natural embedding  $\mathbb{D}_{n} \rightarrow \mathbb{D}_{n+m}$ defined by adding undecorated rays to the right-hand side (or equivalently the embedding $\mathscr{P}_{n} \rightarrow \mathscr{P}_{n+m}$ defined by adding $\down$ nodes to the right), which play no role in the multiplication, means that we lose nothing by drawing the diagrams in the smallest $\mathbb{D}_{n}$ possible.

Due to \cref{KContraction homomorphism}, it will suffice in most of the proof of \cref{THE isomorphism} to only perform calculations on incontractible cup diagrams. In the following, we make explicit the possible shapes of these.
\begin{rmk}[Incontractible cup diagrams]
\label{Incontractible cup diagrams}
Suppose $\la=\mu-\cp$. 
By definition, $\underline{\mu}\la$ being incontractible is equivalent to $\cp$ being the unique cup in $\underline{\mu}$ with $b(\cp)=1$
 Hence, there are three general cases of incontractible diagrams, which we detail below.
\begin{itemize}
\item If $\cp$ is covered, then $\mu=(1,2,\dots,c^k)$ and $\cp$ is undecorated, we are in one of the following cases;
\begin{itemize} \item 
If $c\geq k$ and $c$ is even, then $\cp$ is covered by $k-1$ undecorated cups and doubly covered by $\lfloor \frac{c-k}{2}\rfloor$ decorated cups, as pictured on the left of \cref{Inconcup1}. If $c\geq k$ and $c$ is odd, then $\cp$ is covered by one decorated cup and $k-2$ undecorated cups, and also doubly covered by $\lfloor \frac{c-k}{2}\rfloor$ decorated cups as pictured on the right of \cref{Inconcup1}.

\begin{figure}[ht!]
 
\caption{The cup diagrams $\underline{\mu}\mu$ for $\mu=(1,2,\dots,c^k)$ for $c\geq k$ and $c$ even and odd respectively. Note that a decorated strand on the right-hand appears if and only if $c-k$ is odd.} 
\label{Inconcup1}
\end{figure}

\item If $c<k$, then $\cp$ is covered by $c-1$ undecorated cups and there are $k-c-1$ (respectively $k-c$) undecorated strands and $1$ (respectively $0$) decorated strands to the left of $\cp$ when $c$ is even (resp. odd) as pictured in \cref{Inconcup2}
\begin{figure}[ht!]
%
 
\caption{The cup diagrams $\underline{\mu}\mu$ for $\mu=(1,2,\dots,c^k)$ for $c< k$ and $c$ even and odd respectively.}
\label{Inconcup2}
\end{figure}
 \end{itemize}

\item If $\cp$ is doubly covered, then $\mu=(1,2,\dots,c)$ and $\cp$. If $c$ is even, then $\cp$ is undecorated, else $\cp$ is decorated as shown in \cref{Inconcup3}. In both case $\cp$  is doubly covered by $\lfloor \frac{c-1}{2}\rfloor$ cups.
\begin{figure}[ht!]
%
 
\caption{The cup diagrams $\underline{\mu}\mu$ for $\mu=(1,2,\dots,c)$ for $c$ even and odd respectively. These are the two incontractible cases when $\cp$ is doubly covered.}
\label{Inconcup3}
\end{figure}

\item Else, if $\cp$ is not covered or doubly covered by any cups, then $\mu=(1),(1,2),(1^2)$ or $(1^c)$ and hence looks like one of three cup diagrams pictured in \cref{Inconcup4}.

\begin{figure}[ht!]
%
 
\caption{The cup diagrams $\underline{\mu}\mu$ for $\mu=(1), \mu=(1,2), \mu=(1^2)$ and $\mu=(1^c)$ respectively.}
\label{Inconcup4}
\end{figure}
\end{itemize}
\end{rmk}

\begin{rmk}
\label{exercise}
\noindent Note that the strands to the left of $\cp$ in \cref{Inconcup2}, when $\cp$ is covered, are unaffected by removing $\cp$ or any cup adjacent to $\cp$. They therefore do not play any significant part in the multiplication. Calculations involving the image of these diagrams in $\Psi$ will essentially be identical to those in the case when $c$ is even and $c\geq k$. So, we often leave those calculations corresponding to the cases pictured in \cref{Inconcup2} as an exercise in what follows.
\end{rmk}

We now proceed to check that the relations \eqref{rel1}--\eqref{adjacentcup} from \cref{presentation} hold in the image of $\Psi$. We begin with the simplest relation.

\begin{prop}[The idempotent relations]
\label{tin pot}
The idempotent relations are preserved by $\Psi$.
Equivalently, $$ 
(\underline{\la}\la\overline{\la})(\underline{\mu}\mu\overline{\mu})=\delta_{\la,\mu} \underline{\la}\la\overline{\la} \qquad 
\qquad  i^{\ka(\la,\mu)}(\underline{\la}\la\overline{\la})(\underline{\la}\la\overline{\mu}) (\underline{\mu}\mu\overline{\mu}) =  i^{\ka(\la,\mu)}(\underline{\la}\la\overline{\mu}).
$$
\end{prop}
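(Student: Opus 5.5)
The plan is to verify both identities directly from the multiplication rule \eqref{multeq} together with the local idempotent property of \cref{idempot lem}.

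For the first identity, \eqref{multeq} says that $(\underline{\la}\la\overline{\la})(\underline{\mu}\mu\overline{\mu})$ is zero unless $\underline{\la}=\underline{\mu}$. Since the cup diagram $\underline{\mu}$ together with the rule in \cref{drawcups} determines the weight $\mu$ uniquely as the orientation of degree zero, the product vanishes when $\la\neq\mu$. When $\la=\mu$, stacking gives a diagram in which every cup of $\underline{\la}$ and its mirror cap in $\overline{\la}$ lie on small anti-clockwise circles. This holds because $\la$ orients every cup anti-clockwise, giving degree zero. Rays become propagating lines.

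Take each cup of $\underline{\la}$ and perform surgery on it in an admissible order: we follow \cref{admissible}, working on the right of decorated cups first and proceeding from inner cups to outer cups. I will argue inductively that at each stage the surgered cup/cap meets the circle through it in the pattern of \cref{idempot lem}. That is, one of the two merging components is a small anti-clockwise circle at a single level, and the surgery therefore just deletes it while preserving the orientation of the other component, with no signs.

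For the ray/decorated-ray components the surgery is a merge with a line. In that case \eqref{mergeeq} forces the anti-clockwise term, so again no sign appears. After all surgeries we obtain $\underline{\la}\la\overline{\la}$ with coefficient $1$.

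For the second identity the scalar $i^{\ka(\la,\mu)}$ factors out, so it suffices to show that $(\underline{\la}\la\overline{\la})(\underline{\la}\la\overline{\mu})=\underline{\la}\la\overline{\mu}$, and the analogous right-hand product with $\underline{\mu}\mu\overline{\mu}$. The argument is the same: the idempotent's caps $\overline{\la}$ meet the cups $\underline{\la}$ of the generator, and the idempotent's half consists of anti-clockwise circles. Each surgery merges a small anti-clockwise circle from the idempotent factor with a component of $\underline{\la}\la\overline{\mu}$. By \cref{idempot lem} the result is the deletion of the small circle, leaving the weight $\la$ and the orientation of every component of $\underline{\la}\la\overline{\mu}$ unchanged. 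The right multiplication by $\underline{\mu}\mu\overline{\mu}$ is symmetric.

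The main obstacle is justifying that the circles really are small at the moment each surgery is performed. Nested cups produce circles meeting more than two points once outer surgeries are done, and decorated cups interact with the admissibility constraint of \cref{admissible}. I would handle this by always surgering the innermost available cup first, so that the idempotent-side component through it is still a two-point anti-clockwise circle. If that ordering conflicts with admissibility near decorated cups, I would instead reduce via \cref{KContraction homomorphism} to incontractible diagrams, as listed in \cref{Incontractible cup diagrams}, and check those few cases directly.
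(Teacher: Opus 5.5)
Your proposal is correct and follows the paper's proof, which simply observes that $\underline{\la}\la\overline{\la}$ consists only of small anti-clockwise circles (together with propagating lines) and then applies \cref{idempot lem} repeatedly. The obstacle you anticipate does not arise: \emph{small} means meeting the weight in exactly two points, so nesting does not matter. Moreover, each small circle of the idempotent factor is touched only by the surgery on its own cup, so any admissible order works and the fallback to \cref{KContraction homomorphism} is unnecessary.
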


\begin{proof}
$\underline{\la}\la\overline{\la}$ only contains small anti-clockwise oriented circles. Thus, the relation follows by the definition of merge surgeries in $\mathbb{D}_n$ and repeatedly applying \cref{idempot lem}.
\end{proof}

\begin{prop}[The commuting relations] 
\label{KcomR}
The commuting relations are preserved by $\Psi$. Equivalently,
suppose that $\cp,\mq\in \underline{\mu}$ commute and  $\la =\mu-\cp, \quad \nu=\mu-\mq, \quad \sigma=\mu-\mq-\cp$.
Then, \begin{align}
\label{Kcom1}
i^{\ka(\sigma,\la)+\ka(\la,\mu)}(\underline{\sigma}\sigma\overline{\la})(\underline{\la}\la\overline{\mu})&=i^{\ka(\sigma,\nu)+\ka(\nu,\mu)}(\underline{\sigma}\sigma\overline{\nu})(\underline{\nu}\nu\overline{\mu})\\
\label{Kcom2}
i^{\ka(\la,\mu)+\ka(\nu,\mu)}(\underline{\la}\la\overline{\mu}) (\underline{\mu}\nu\overline{\nu}) &=i^{\ka(\sigma,\la)+\ka(\sigma,\nu)}(\underline{\la}\sigma\overline{\sigma}) (\underline{\sigma}\sigma\overline{\nu}).
\end{align}
\end{prop}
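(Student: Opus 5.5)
We prove \eqref{Kcom1} and \eqref{Kcom2} in three steps: first show that the scalars on the two sides agree, then reduce to small diagrams by contraction, and finally compute the surviving products directly.

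\textbf{Step 1: the powers of $i$ agree.} By \cref{commie2}, since $\cp$ and $\mq$ commute we have $\cp\in\underline{\nu}$ and $\mq\in\underline{\la}$. Moreover $\sigma=\la-\mq=\nu-\cp$, where these are the \emph{same} cups $\cp,\mq$, with the same endpoints. \cref{suppminusdefn} depends only on the endpoints of the flipped cup. Hence
\[
\ka(\sigma,\la)=\ka(\nu,\mu)=\tfrac12(l_\mq+r_\mq),\qquad \ka(\sigma,\nu)=\ka(\la,\mu)=\tfrac12(l_\cp+r_\cp).
\]
So the exponents on both sides of each of \eqref{Kcom1} and \eqref{Kcom2} are equal, and it suffices to prove the two equalities of products of arc diagrams with all scalars removed.

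\textbf{Step 2: reduction by contraction.} Suppose there is a $k$ such that all weights involved lie in $\mathscr{P}^k_n$, i.e.\ some cup of breadth one (or the decorated cup at $(\frac12,\frac32)$) other than $\cp,\mq$ is common to all of $\underline\mu,\underline\la,\underline\nu,\underline\sigma$. Then \cref{KContraction homomorphism} lets us apply $\tilde\Phi_k$, and we may pass to $\mathbb{D}_{n-2}$. Iterating, we reduce to configurations in which every cup of $\underline\mu$ other than $\cp,\mq$ is forced by nesting.

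Three configurations remain.
\begin{enumerate}
\item $\cp$ and $\mq$ are side by side, so neither (doubly) covers the other.
\item $\mq\prec\gr\prec\cp$ for some $\gr$.
\item The doubly covered analogues, $\mq\prec\gr\prec\prec\cp$ or $\mq\prec\prec\gr\prec\prec\cp$.
\end{enumerate}
In cases (2) and (3) the intermediate cup $\gr$ separates $\cp$ from $\mq$. Contracting inside $\gr$ reduces $\mq$ to breadth one, so only a bounded number of small diagrams remain, drawn on at most six vertices.

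\textbf{Step 3: direct surgery computation.} For each reduced picture we compute both products via the surgery rules \eqref{mergeeq}, \eqref{spliteq} and \eqref{reconeq}. In every case the stacked diagram has the following structure: small anti-clockwise circles act as local idempotents by \cref{idempot lem}, and the cup $\cp$ (respectively $\mq$) gives a single merge producing one circle, oriented so as to yield the target basis element $\underline\sigma\sigma\overline\mu$ (respectively $\underline\la\sigma\overline\nu$ for \eqref{Kcom2}). The key point is that the surgeries at $\cp$ and at $\mq$ involve disjoint components. This disjointness is exactly what commuting guarantees, through the intermediate cup $\gr$ or through the cups not being nested. Consequently the signs ${\sf sign}_D$ picked up at each merge are the same whichever order the two sides of the equation realise.

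For \eqref{Kcom2} the middle cup diagram differs between the two sides: it is $\underline\mu$ on the left and $\underline\sigma$ on the right. Here we check that both stacked diagrams have the same underlying circle diagram $\underline\la\,\cdot\,\overline\nu$ after surgery, with orientation $\sigma$ and one clockwise circle coming from each of $\cp$ and $\mq$.

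\textbf{Main obstacle.} The main obstacle is the sign bookkeeping in the doubly covered case (3). There the tag of a merged component can move, because decorated cups make the rightmost vertex of a component shift. We would handle this by computing directly the two-cup examples of \cref{doublecover1}, where all six vertices are visible. We would then show that every tag lies on the outer cup $\cp$ in both orders, so that the two signs coincide.
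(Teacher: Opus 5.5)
Your Step 1 is exactly the paper's sign check. Steps 2--3 follow the paper's route: reduce with \cref{KContraction homomorphism}, then evaluate with \cref{idempot lem}. The problem is the claim Step 3 rests on, that the surgeries attached to $\cp$ and to $\mq$ involve disjoint components because the cups commute.

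In each of the four products, the surgery at a cup $c$ of the middle diagram of $(\underline{\alpha}\,x\,\overline{\beta})(\underline{\beta}\,y\,\overline{\gamma})$ is a merge with a small anticlockwise circle unless $c\notin\underline{\alpha}$ and $c\notin\underline{\gamma}$. For nested pairs no such $c$ exists. The same holds for doubly covered pairs, since flipping the decorated $\cp$ only creates cups starting at $l_\cp,r_\cp$ and running rightwards. So in these cases every surgery is trivial, and the tag bookkeeping you single out as the main obstacle never arises.

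For cups lying side by side, however, disjointness fails. If $\cp$ is undecorated, flipping both cups puts a $\vee$ at $r_\mq$ and a $\wedge$ at $l_\cp$, and these pair up in $\underline{\sigma}$. Take $\mu=\vee\wedge\vee\wedge$, $\mq=(\frac12,\frac32)$, $\cp=(\frac52,\frac72)$. Then $\sigma=\wedge\vee\wedge\vee$, and $\underline{\sigma}$ contains the cup $(\frac32,\frac52)$. The left side of \eqref{Kcom2} comes purely from \cref{idempot lem}. By contrast, $(\underline{\la}\sigma\overline{\sigma})(\underline{\sigma}\sigma\overline{\nu})$ consists of two propagating lines, which the single surgery at $(\frac32,\frac52)$ reconnects into the two non-propagating lines of $\underline{\la}\sigma\overline{\nu}$.

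Similarly, suppose $\mq$ and $\cp$ lie side by side under a common cup $\vu$, so that both flips destroy $\vu$. Then each side of \eqref{Kcom1} and of \eqref{Kcom2} contains a surgery between two non-small components. These configurations must be computed with \eqref{mergeeq} and \eqref{reconeq}, checking that the coefficient is $1$ on both sides; disjointness does not dispose of them. The paper's one-line treatment of the case $r_\mq<l_\cp$ or $r_\cp<l_\mq$ asserts the same non-interaction, so this computation has to be supplied in either version.

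Two smaller corrections.

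First, there is no six-vertex bound after contraction. $\tilde\Phi_k$ only removes breadth-one cups common to $\underline\mu,\underline\la,\underline\nu,\underline\sigma$. For example, $\mu=\vee\vee\vee\vee\wedge\wedge\wedge\wedge$ with $\mq$ the innermost and $\cp$ the outermost cup admits no contraction at all. Likewise, the paper's reduced configurations $\mu=(1,2,\dots,c^k)$ have unboundedly many cups. What makes the computation finite is that every cup common to the relevant cup diagrams gives a small anticlockwise circle, which \cref{idempot lem} deletes; this is how the paper evaluates its pictures.

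Second, the degree of $\underline{\la}\sigma\overline{\nu}$ comes from the clockwise cup $\mq$ of $\underline{\la}\sigma$ and the clockwise cap $\cp$ of $\sigma\overline{\nu}$. These need not lie on clockwise circles; in the example above they lie on two non-propagating lines. So ``one clockwise circle coming from each of $\cp$ and $\mq$'' is not the right description of the answer.
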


\begin{proof}
First,  we check the signs in \eqref{Kcom1} and \eqref{Kcom2}. Note that $\sigma= \la-\mq$ and so it is easy to check that $\ka(\sigma,\la)=\ka(\mu,\nu)$. Similarly $\sigma= \nu-\cp$, so $\ka(\sigma,\nu)=\ka(\la,\mu)$ and hence the powers of $i$ on both sides of \cref{Kcom1,Kcom2} are the same.

If either $r_\mq<l_\cp$ or  $r_\cp<l_\mq$, then the relations follow immediately from \cref{idempot lem} since the only two components that aren't small anti-clockwise oriented circles are far away from each other and hence do not interact.

Else, $\cp \prec \mq$ and $\cp$ and $\mq$ commute. In what follows, we only explicitly check \eqref{Kcom2}; \eqref{Kcom1} follows in an almost identical manner and is left as an exercise. Applying \cref{KContraction homomorphism} inductively, we only need to consider the incontractible case where $b(\cp)=1$. Using \cref{Incontractible cup diagrams} we have that case $\mu=(1,2,\dots,c^k)$ and $\la=(1,2,\dots,c^{k-1},c-1)$.
We will also assume that $c\geq k$; the case $c<k$ is similar, as noted above (\cref{exercise}), and is left as an exercise.  There are three cases to consider:

Case $(i)$. If  $c$ and $k$ are both odd, then $\mq$ is decorated and
then $(\underline{\la}\la\overline{\mu}) (\underline{\mu}\nu\overline{\nu})$ is equal to
\begin{equation}
\label{com pic 1}
\begin{minipage}{7cm}
 
\end{minipage}\end{equation}
and $(\underline{\la}\sigma\overline{\sigma}) (\underline{\sigma}\sigma\overline{\nu})$ is equal to:
\begin{equation}
\label{com pic 2}
\begin{minipage}{7cm}%
 
\end{minipage}\end{equation}
where the dashed lines denote $k-2$ small anti-clockwise circles and the equalities follow by applying \cref{idempot lem} $k+\tfrac{k-c}{2}$ times gives the equalities above.

Case $(ii)$.
Now suppose that  $c$ and $k$ are both even, in which case
 $\mq$ becomes undecorated.
The diagrams  
  $ (\underline{\la}\la\overline{\mu})( \underline{\mu}\nu\overline{\nu})$ and 
  $ (\underline{\la}\sigma\overline{\sigma})(\underline{\sigma}\sigma\overline{\nu}) $
   are obtained from those in \eqref{com pic 1} and \eqref{com pic 2} 
 by simply  removing the decoration  from each arc 
   $\mq$ and re-orienting at $l_\mq$ (from $\down$ to $\up$);
   neither of these differences affects the merge surgery defined in \cref{mergeeq}.    
    
Finally, it remains to consider the case that one of $c$ and $k$ is odd and the other even.  If $c$ is odd, then the diagrams are  identical to those of 
Case $(i)$, pictured in   \eqref{com pic 1} and \eqref{com pic 2} with the exception that we add a  decorated strand to the far right-hand side of each; 
similarly if $k$  is odd and $c$ is even,
 then the diagrams are identical to those of   Case $(ii)$ with a decorated strand added to the far right.
In either case, 
these strands do not affect the multiplication, and so we are done.  
\end{proof}

\begin{prop}[The non-commuting relations] 
\label{KCR}
The non-commuting relations are preserved by $\Psi$.
Equivalently, suppose that $\cp,\mq\in \underline{\mu}$ are such that $\mq \prec \cp$ are non-commuting. Then $\mq$ is adjacent to a pair of non-concentric commuting cups, which we label by ${\color{darkgreen}q}^1$ and ${\color{orange}q}^2$ from left to right, in $\underline{\mu-\mq}$ (with ${\color{darkgreen}q}^1$ possibly decorated).
Let $\la =\mu-\cp, \quad \nu=\mu-\mq, \quad \alpha=\mu-\mq-{\color{darkgreen}q}^1, \quad \beta=\mu-\mq-{\color{orange}q}^2$. Then:
\begin{equation}
\label{K-non-com}
i^{\ka(\la,\mu)+\ka(\nu,\mu)}(\underline{\la}\la\overline{\mu})(\underline{\mu}\nu\overline{\nu})=
i^{\ka(\la,\alpha)+\ka(\alpha,\nu)}(\underline{\la}\la\overline{\alpha})(\underline{\alpha}\alpha\overline{\nu})= 
i^{\ka(\la,\beta)+\ka(\beta,\nu)} (\underline{\la}\la\overline{\beta})(\underline{\beta}\beta\overline{\nu})
\end{equation}
\end{prop}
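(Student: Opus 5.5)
Following the proof of \cref{KcomR}, we first settle the powers of $i$ and only then compare the products of arc diagrams. The constant $\ka$ depends only on the flipped cup, so $\ka(\la,\mu)=\ka(\mu-\cp,\mu)$ is determined by $\cp$, and $\ka(\nu,\mu)$ by $\mq$. On the right-hand sides, $\la=\alpha+{\color{darkgreen}q}^1$ with ${\color{darkgreen}q}^1$ viewed as a cup of $\underline{\la}$, while $\alpha=\nu-{\color{darkgreen}q}^1$. Using \cref{adj lem2} a), b) and \cref{gen def}, $\cp$ is the cup $\langle \mq\cup{\color{darkgreen}q}^1\rangle_\mu$, with $l_\cp=l_{{\color{darkgreen}q}^1}$, and likewise $r_\cp=r_{{\color{orange}q}^2}$. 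Expressing each endpoint in terms of $l_\cp,r_\cp,l_\mq,r_\mq$ should give
$$\ka(\la,\alpha)+\ka(\alpha,\nu)=\ka(\la,\beta)+\ka(\beta,\nu)=\ka(\la,\mu)+\ka(\nu,\mu)$$
modulo $4$. This identity is the reason $\ka$ was taken to be the midpoint $\tfrac12(l+r)$. If a discrepancy of $2$ appears, for instance when ${\color{darkgreen}q}^1$ is decorated, we will absorb the resulting sign $-1$ into the diagrammatic computation rather than into the exponent.

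Next we reduce to small diagrams. We apply \cref{KContraction homomorphism} repeatedly, contracting at every small cup away from $\cp,\mq$, together with the contraction \cref{dilation} on the Hecke side. Since $\Psi$ intertwines the two contractions up to exactly the powers of $i$ tracked by $\ka$, we may assume the pair is minimal. Then $\mq$ is the unique cup of breadth one, so $\mu=(1,2,\dots,c^k)$ as in \cref{Incontractible cup diagrams} with $\mq$ covered. Moreover $\cp$ is the smallest cup covering $\mq$, and all other cups are innermost-nested or doubly covering. By \cref{exercise} we may take $c\geq k$ and split by the parities of $c$ and $k$, as in \cref{KcomR}. These parities decide whether $\cp$, and hence ${\color{darkgreen}q}^1$, is decorated, and whether there is an extra decorated ray on the right; that ray is inert.

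In each parity case we compute the three products by the surgery procedure. Every cup other than $\cp$ and $\mq$ contributes a small anti-clockwise circle, which \cref{idempot lem} deletes. What remains is essentially the four-vertex picture of \cref{adj lem2} a) or b).
\begin{itemize}
\item In $(\underline{\la}\la\overline{\mu})(\underline{\mu}\nu\overline{\nu})$ the relevant surgeries are at $\cp$ and $\mq$ in $\underline{\mu}$, and the result should be a single merge giving $\underline{\la}\la\overline{\nu}$, degree $2$, with coefficient $\pm1$.
\item In $(\underline{\la}\la\overline{\alpha})(\underline{\alpha}\alpha\overline{\nu})$ the relevant surgeries are at ${\color{darkgreen}q}^1$, ${\color{orange}q}^2$ and the surviving cup of $\underline{\alpha}$. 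We expect these to be merges giving the same oriented diagram.
\item The product through $\beta$ is handled the same way.
\end{itemize}
The order of surgeries must respect \cref{admissible}: when ${\color{darkgreen}q}^1$ is decorated, we perform surgery on the right cup first.

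The main obstacle is the sign bookkeeping. Each merge carries factors ${\sf sign}_D[\cdot]$ from \cref{mergeeq}, and these depend on where the tag of the merged component sits and on whether a decoration reverses orientation. These factors must combine with the powers of $i$ from the first step so that all three expressions agree exactly. I would first compute the smallest instance, $\mu=(1,2)$-type with $n=4$ and ${\color{darkgreen}q}^1$ undecorated, explicitly and then with ${\color{darkgreen}q}^1$ decorated. I would then argue that adding the further nested or doubly covering small circles changes neither the tags nor the signs. Finally, since the degree-two part of $\mathbb{D}_n$ between $\la$ and $\nu$ is one-dimensional here by the cellular basis comparison with \cref{cellular basis}, each product is a scalar multiple of $\underline{\la}\la\overline{\nu}$, and only the scalar needs to be matched.
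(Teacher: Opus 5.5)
Your overall plan is the paper's: match the exponents using the endpoint coincidences from \cref{adj lem2}, contract with \cref{KContraction homomorphism} to the incontractible case $b(\mq)=1$, $b(\cp)=2$, $\mu=(1,2,\dots,c^k)$, and evaluate the three products by surgery using \cref{idempot lem}. However, the sign step as you set it up is wrong, and the fallback you propose for it is not legitimate.

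\textbf{The sign step.} It is not true that $\la=\alpha+{\color{darkgreen}q}^1$ with ${\color{darkgreen}q}^1$ a cup of $\underline{\la}$. The vertex $r_{{\color{darkgreen}q}^1}=l_\mq$ is joined to $r_\mq$ in $\underline{\la}$, so ${\color{darkgreen}q}^1$ is not a cup there. Instead, $\la=\alpha-s$, where $s\in\underline{\alpha}$ is the cup sitting at the position of ${\color{orange}q}^2$. Symmetrically, $\la=\beta-s'$ with $s'\in\underline{\beta}$ at the position of ${\color{darkgreen}q}^1$.

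For example, take $\mu=\down\down\up\up$. Then $\nu=\down\up\down\up$, $\la=\up\down\up\down$ and $\alpha=\up\down\down\up$. The diagram $\underline{\alpha}$ has the single cup $(\tfrac52,\tfrac72)$, and flipping it gives $\la$. Hence
\[ \ka(\la,\alpha)+\ka(\alpha,\nu)=\tfrac12\bigl(l_{{\color{orange}q}^2}+r_{{\color{orange}q}^2}\bigr)+\tfrac12\bigl(l_{{\color{darkgreen}q}^1}+r_{{\color{darkgreen}q}^1}\bigr)=\tfrac12(l_\cp+r_\cp)+\tfrac12(l_\mq+r_\mq), \]
using $l_{{\color{darkgreen}q}^1}=l_\cp$, $r_{{\color{darkgreen}q}^1}=l_\mq$, $l_{{\color{orange}q}^2}=r_\mq$ and $r_{{\color{orange}q}^2}=r_\cp$. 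This is an exact equality, not just one modulo $4$. It holds whatever the decorations are, since $\ka$ ignores them.

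With your identification you would get $2\cdot\tfrac12(l_{{\color{darkgreen}q}^1}+r_{{\color{darkgreen}q}^1})=l_\cp+l_\mq$ instead. This differs from the target by $\tfrac12(r_\cp+r_\mq-l_\cp-l_\mq)$, which is $2$ in the minimal case. Your plan to ``absorb'' such a sign into the diagrammatic computation is not available. The three products in $\mathbb{D}_n$ are literally equal, so a spurious $-1$ would make the relation look false.

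\textbf{The reduction.} Once the exponents agree, \eqref{K-non-com} is just an equality of three products in $\mathbb{D}_n$. \Cref{KContraction homomorphism} alone transports that equality, so you do not need the Hecke-side \cref{dilation}. Nor do you need any claim that $\Psi$ intertwines the two contractions. That claim would need its own proof, and it fails on the nose: contracting at a small cup strictly left of an undecorated $\cp$ lowers $\ka$ by $2$, while the Hecke coefficient is $1$.

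\textbf{The surgeries.} In $(\underline{\la}\la\overline{\alpha})(\underline{\alpha}\alpha\overline{\nu})$ surgery is performed only at cups of the middle diagram $\underline{\alpha}$, not at ${\color{darkgreen}q}^1$ and ${\color{orange}q}^2$. In the reduced picture there is exactly one such cup, and surgery there merges with the small anticlockwise circle at ${\color{orange}q}^2$'s position. One application of \cref{idempot lem} then gives $\underline{\la}\la\overline{\nu}$; the same happens for $\beta$. In the first product there are two surgeries, at $\mq$ and then at $\cp$. The first is a small-circle merge, and the second merges an anticlockwise circle with a line. So no ${\sf sign}_D$ factors arise at all, and the ``main obstacle'' you anticipate does not materialise.
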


\begin{proof}
We first check that the signs in \eqref{K-non-com} are equal throughout. The cups ${\color{darkgreen}q}^1$ and ${\color{orange}q}^2$ are both adjacent to $\cp$ and $\mq$. Hence we have that $l_\cp=l_{{\color{darkgreen}q}^1}, r_\cp=r_{{\color{orange}q}^2}, l_\mq=r_{{\color{darkgreen}q}^1}$ and $r_\mq=l_{{\color{orange}q}^2}$ (see cases in a) and b) in \cref{adj lem2} or \cite[Remark 5.10]{My1}). Therefore, we obtain that
 \begin{align}
  \label{sign non-com}
 \begin{split}
\ka(\la,\alpha)+\ka(\alpha,\nu) &= \ka(\la,\beta)+\ka(\beta,\nu)\\
&=\tfrac{1}{2}(l_{{\color{darkgreen}q}^1}+r_{{\color{darkgreen}q}^1})+\tfrac{1}{2}(l_{{\color{orange}q}^2}+r_{{\color{orange}q}^2})\\
&=\tfrac{1}{2}(l_\cp+r_\cp)+\tfrac{1}{2}(l_\mq+r_\mq)\\
&=\ka(\la,\mu)+\ka(\nu,\mu)
 \end{split}
 \end{align}
Thus, by applying \cref{KContraction homomorphism} inductively, it suffices to restrict our focus to the incontractible case;
by considering the diagrams in \cref{Incontractible cup diagrams} we realise that we need only consider the case that 
$b(\mq)=1$ and $b(\cp)=2$ with $\mq\prec \cp$. Explicitly, this when $\mu=(1,2,\dots,c^k)$, $\nu=(1,2,\dots,c^{k-1},c-1)$ and $\la=(1,2,\dots,(c-1)^{k-1},c-2)$.

We now demonstrate the product when $k=2$ and $c$ is odd; the other cases will be similar. If $k=2$ and $c$ is odd, then $\cp$ is decorated and 
 $(\underline{\la}\la\overline{\mu})(\underline{\mu}\nu\overline{\nu})$ is equal to
\begin{equation}
\label{non-com pic 1}
\begin{minipage}{5.4cm}
 
\end{minipage},\end{equation}
where the equality follows by $\frac{c+1}{2}$  
applications of \cref{idempot lem}. 
 Similarly,  we have that   $(\underline{\la}\la\overline{\alpha})(\underline{\alpha}\alpha\overline{\nu})$ is equal to
\begin{equation}
\label{noncom pic 2}
\begin{minipage}{5.4cm}%
 
\end{minipage},\end{equation}
simply by $\frac{c+1}{2}$ applications of \cref{idempot lem}; 
finally, we have that  $(\underline{\la}\la\overline{\beta})(\underline{\beta}\beta\overline{\nu})$ is equal to

\begin{equation}
\label{non-com pic 3}
\begin{minipage}{5.4cm}%
 
\end{minipage},\end{equation}
 again by   $\frac{c+1}{2}$ applications of \cref{idempot lem}, which shows that \eqref{K-non-com} holds. 
If $k>2$, $\cp$ is not decorated; it will be covered by  $k-3$ small anti-clockwise oriented undecorated circles and one decorated small anti-clockwise oriented circle, but using \cref{idempot lem}, these don't affect the surgery procedure, and therefore the calculations are essentially identical. 
If $k>c$, there will be strands on the left-hand side of the diagram that similarly have minimal effect on the calculations.
Finally, if $c$ is even, there will be no decorations present, and the calculation is essentially equivalent to the non-commuting case for type $A$ as in \cite[Section 9]{ChrisDyckPaper}.

\end{proof}

\begin{prop}[The doubly non-commuting relations] 
\label{KDC}
The doubly non-commuting relations are preserved by $\Psi$.
Equivalently, suppose that $\cp,\mq\in \underline{\mu}$ are such that $\mq \prec \prec \cp$ and they do not commute. Then $\mq$ is adjacent to a pair of concentric non-commuting cups, ${\color{darkgreen}q}^1$ and ${\color{orange}q}^2$ with $ {\color{darkgreen}q}^1\prec{\color{orange}q}^2 $ in $\underline{\mu - \mq}$ (with ${\color{orange}q}^2$ possibly decorated). 
Let $\la =\mu-\cp, \quad \nu=\mu-\mq, \quad \alpha=\mu-\mq-{\color{orange}q}^2$. Then:
\begin{equation}
\label{K-double-non-com} 
i^{\ka(\la,\mu)+\ka(\nu,\mu)}(\underline{\la}\la\overline{\mu})(\underline{\mu}\nu\overline{\nu})=
i^{\ka(\la,\alpha)+\ka(\alpha,\nu)}(\underline{\la}\la\overline{\alpha})(\underline{\alpha}\nu\overline{\nu})
\end{equation}
\end{prop}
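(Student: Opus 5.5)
The plan follows the template of \cref{KCR}: first match the powers of $i$ on the two sides of \eqref{K-double-non-com}, then use contraction to reduce to a single incontractible configuration and check it by explicit surgery.

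\textbf{Signs.} By \cref{adj lem2} c), and its generalisation in \cref{general}, the cups ${\color{darkgreen}q}^1\prec{\color{orange}q}^2$ adjacent to $\mq$ in $\underline{\nu}$ share one node each with $\mq$ and with the cup $\cp$ doubly covering it. Reading off the pictured case, I expect ${\color{orange}q}^2$ to have endpoints $\{l_\mq, r_\cp\}$ or $\{r_\mq,r_\cp\}$, and ${\color{darkgreen}q}^1$ to use the remaining two nodes. I will compute $\ka(\la,\alpha)=\ka(\nu-{\color{orange}q}^2,\nu)$ is not literally what appears, so I must identify $\la=\alpha-\text{(some cup)}$. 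I expect $\la=\alpha-{\color{darkgreen}q}^1{}'$, where ${\color{darkgreen}q}^1{}'\in\underline{\alpha}$ is the cup on the complementary pair of nodes. The sum of the two $\ka$'s on each side is then half the sum of the same four endpoints $l_\cp,r_\cp,l_\mq,r_\mq$. This is analogous to \eqref{sign non-com}, so the powers of $i$ agree. If the endpoint bookkeeping instead gives a difference, it should be an even integer; the residual sign $\pm1$ would then have to be absorbed by the surgery signs, so I would track it into the computation below.

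\textbf{Reduction.} The sign identity is stable under contraction. Combining \cref{KContraction homomorphism} with \cref{dilation}, and noting that $\Psi$ intertwines them up to the $i$-powers accounted for by $\ka$, we can contract inductively. This reduces to the incontractible case with $b(\mq)=1$, which by \cref{Incontractible cup diagrams} means $\mq$ is doubly covered. Thus $\mu=(1,2,\dots,c)$, and $\cp$ is the smallest cup doubly covering $\mq$, namely the decorated cup or ray immediately to its right. The remaining doubly covering cups in $\mu$ sit outside and, after the surgeries, form small anti-clockwise circles. These are removed using \cref{idempot lem}. It then suffices to treat a diagram on four vertices (plus possibly a decorated ray on the right when $c$ has the other parity), in two subcases: $c$ even ($\mq$ undecorated) and $c$ odd ($\mq$ decorated).

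\textbf{Computation and main obstacle.} In each subcase I would stack $\underline{\la}\la\overline{\mu}$ under $\underline{\mu}\nu\overline{\nu}$ and perform the admissible surgeries in the order forced by \cref{admissible}, that is, the right-hand cups first. I would then compare with the stack of $\underline{\la}\la\overline{\alpha}$ and $\underline{\alpha}\nu\overline{\nu}$. The main difficulty is that the cups of $\underline{\mu}$ and $\underline{\alpha}$ are nested/decorated differently. As a result one product may involve a merge, while the other involves a split followed by a merge, or a reconnect. The signs ${\sf sign}_D$ at tags and the corrected split sign of \cref{get out of jail remark} must therefore be tracked carefully. I would first check that both sides yield the same underlying oriented circle diagram $\underline{\la}\sigma\overline{\nu}$ with a single term. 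I would then verify that the coefficients agree, possibly up to a sign that matches the residual discrepancy from the sign step. Finally, I would note that the extra decorated ray for the other parity lies on a propagating line untouched by surgery, and so does not change the result.
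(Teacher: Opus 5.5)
Your plan is essentially the paper's proof: match the powers of $i$, contract to the incontractible case $\mu=(1,2,\dots,c)$ with $b(\mq)=1$, $b(\cp)=3$, and check both parities of $c$ by explicit surgery, reading the second factor on the right as $\underline{\alpha}\alpha\overline{\nu}$ as the paper's proof does. Your sign step is exact, because nesting forces $q^2$ onto $(l_\mq,r_\cp)$ and $q^1$ onto $(r_\mq,l_\cp)$, and $\la$ is $\alpha$ with the cup on $(r_\mq,l_\cp)$ flipped, so no residual sign remains and the reduction needs only \cref{KContraction homomorphism}, not \cref{dilation}.

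When you carry out the surgeries, every one on either side is a merge with an anticlockwise component, so both products equal $\underline{\la}\la\overline{\nu}$ with coefficient $1$ and none of the anticipated splits, reconnects or compensating signs occur. One correction: the extra cups do not all become small circles. The cup just right of $\cp$ (or, for $c=4$, the decorated ray) is re-nested with the nodes of $\cp$ in $\underline{\la}$ and lies on the component merged at the $\cp$-surgery, so it is not untouched; this changes the picture but not the outcome.
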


\begin{proof}
We first check that the signs are equal on either side of \eqref{K-double-non-com}.
Just as in non-commuting case, the cups ${\color{darkgreen}q}^1$ and ${\color{orange}q}^2$ are both adjacent to $\cp$ and $\mq$ and so $l_\cp=l_{{\color{orange}q}^2}, r_\cp=r_{{\color{darkgreen}q}^1}, l_\mq=r_{{\color{orange}q}^2}$ and $r_\mq=l_{{\color{darkgreen}q}^1}$ (see cases in c) and d) in \cref{adj lem2} or \cite[Remark 5.10]{My1}). So performing the same calculation as \cref{sign non-com} suffices to show that the signs are equal on both sides of \eqref{K-double-non-com}.

Therefore, as in the non-commuting case, applying inductively \cref{KContraction homomorphism}, we need only consider the incontractible case with $b(\mq)=1$ and $b(\cp)=3$. 
Explicitly, $\mu=(1,2,\dots,c)$, $ \nu=(1,2,\dots,(c-1)^2)$ and $\la=(1,2,\dots,c-2,(c-3)^2)$. If $c$ is odd, then $\mq$ is decorated and 
by applying \cref{idempot lem} $\frac{c+1}{2}$ times, we have that $(\underline{\la}\la\overline{\mu})(\underline{\mu}\nu\overline{\nu})$  is equal to
\begin{equation}
\label{doubly non-com pic 1}
\begin{minipage}{5.3cm}
 
\end{minipage}\end{equation} and 
$(\underline{\la}\la\overline{\alpha})(\underline{\alpha}\alpha\overline{\nu})$ is equal to
\begin{equation}
\label{doubly non-com pic 2}
\begin{minipage}{5.3cm}%
 
\end{minipage}\end{equation} which proves that \cref{K-double-non-com} holds.
If $c$ is even, $\mq$ is then undecorated, but the only difference in the calculations is this change in decoration of certain arcs. These decorations don't affect the merging calculations at all, and the equalities above follow almost identically.
\end{proof}

We now turn to proving that the most computationally difficult relation, the self-dual relation, holds in the image of $\Psi$. Because this requires significantly more effort, we divide the proof into two parts: first showing that \eqref{selfdualrelD} holds in the image of $\Psi$, and then proceeding to do the same for \eqref{selfdualrelA}.
To simplify the signs of \eqref{selfdualrelD} and \eqref{selfdualrelA} in the image of $\Psi$, we will first prove a preliminary lemma.

\begin{lem}
\label{signs}
Suppose that $\nu=\mu-\mq$ is contractible at $k\in\mathbb{Z}_{\geq0}$ and that $\Phi_k(\nu)=\Phi_k(\mu)-\mq'$ Then 
$$ (-1) ^{b(\mq)\pm\ka(\mu,\nu)}=(-1) ^{b(\mq')\pm\ka(\Phi_k(\mu),\Phi_k(\nu))}$$
\end{lem}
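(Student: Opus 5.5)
The plan is to reduce the claim to a parity statement about the endpoints of $\mq$. Then we check directly how contraction changes those endpoints and the decoration, using \cref{breadth}.

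\emph{Step 1 (reduce to a parity statement).} Write $l=l_\mq$ and $r=r_\mq$, with $l,r\in\frac12+\ZZ$.
\begin{itemize}
\item If $\mq$ is undecorated, then $r-l$ is odd. Hence $b(\mq)=\frac12(r-l+1)$ and $\ka(\mu,\nu)=\frac12(l+r)$ are both integers, and $b(\mq)+\ka(\mu,\nu)=r+\frac12$.
\item If $\mq$ is decorated, then $b(\mq)=\frac12(l+r)=\ka(\mu,\nu)$. Hence $b(\mq)+\ka(\mu,\nu)=2\ka(\mu,\nu)$ is even and $b(\mq)-\ka(\mu,\nu)=0$.
\end{itemize}
In every case the exponents are integers, and $(-1)^{x}=(-1)^{-x}$. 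So the choice of $\pm$ is irrelevant, and it suffices to show $b(\mq)+\ka(\mu,\nu)\equiv b(\mq')+\ka(\Phi_k(\mu),\Phi_k(\nu)) \pmod 2$. Define $\epsilon(\mq)=0$ if $\mq$ is decorated and $\epsilon(\mq)=r_\mq+\frac12 \pmod 2$ if $\mq$ is undecorated. Then we must show $\epsilon(\mq)=\epsilon(\mq')$.

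\emph{Step 2 (how $\Phi_k$ moves $\mq$).} By the definition of $\Phi_k$, we delete the two vertices $k\pm\frac12$ and shift every vertex to their right by $2$. This gives three cases.
\begin{itemize}
\item If $k>r$, then $(l',r')=(l,r)$.
\item If $l<k<r$, then $(l',r')=(l,r-2)$.
\item If $k<l$ (including $k=0$), then $(l',r')=(l-2,r-2)$.
\end{itemize}
In all cases $r'\equiv r\pmod 2$. By \cref{breadth}, the decoration of $\mq'$ agrees with that of $\mq$ unless $l_{\mq'}=\frac12$ and $k\neq 0$. Whenever the decoration is unchanged, $\epsilon(\mq)=\epsilon(\mq')$ follows immediately, since $\epsilon$ is then either $0$ on both sides or equal to $r+\frac12\equiv r'+\frac12$.

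\emph{Step 3 (the exceptional decoration change).} This is the only delicate point. Suppose $l_{\mq'}=\frac12$ and the decoration flips. Whichever of $\mq,\mq'$ is undecorated has its left endpoint at $\frac12$ or at $\frac52$. Its right endpoint therefore satisfies $r-\frac12$ odd, or $r-\frac52$ odd, respectively. In either case $r+\frac12$ is even, so its $\epsilon$ is $0$. This matches the value $0$ for the decorated partner, which gives $\epsilon(\mq)=\epsilon(\mq')$.

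The main obstacle is making sure Step 3 covers every configuration that \cref{breadth} permits. In particular, we must check that a decoration flip can only happen with $l'=\frac12$, so that the original left endpoint is forced to be $\frac12$ or $\frac52$. I would verify this against the reorientation rule (iv) for the first node in the definition of $\Phi_k$.
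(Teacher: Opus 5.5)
Your proof is correct and follows essentially the same route as the paper. Both arguments use that $b(\mq)+\ka$ has the parity of $r_\mq+\tfrac12$ (undecorated) or $r_\mq+l_\mq$ (decorated), that contraction shifts endpoints by $0$ or $2$, and that the decoration flip at $l_{\mq'}=\tfrac12$, $k\neq 0$ must be handled separately; your Step 3 spells out what the paper dismisses as ``clearly unchanged'', and your Step 1 justifies the $\pm$ reduction that the paper leaves implicit.
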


\begin{proof}
Suppose $\diamond$ is a node in $\mu$ at position $i-\frac{1}{2}$. If $i<k$ then $\Phi_k(\diamond)=i-\frac{1}{2}$ and if $i>k$ then $\Phi_k(\diamond)=i-2-\frac{1}{2}$. The claim then follows from the fact that $b(\mq)+\ka(\nu,\la)$ is equal to $r_\mq+\frac{1}{2}$ or $r_\mq+l_\mq$, $\mq$ undecorated or decorated respectively (and the obvious equivalent for $b(\mq)-\ka(\nu,\la)$ is similar). We also note that if $k\neq0$ and $l_{\mq'}=\frac{1}{2}$ then the decoration of $\mq$ and $\mq'$ will differ, however in this case the expression for $b(\mq+\ka(\nu,\la)$ is clearly unchanged (and likewise similarly for $b(\mq)-\ka(\nu,\la)$). 
\end{proof}

\begin{prop}[The self-dual relations: part 1] 
\label{selfdual1}
The equation \eqref{selfdualrelD} is preserved by $\Psi$. Equivalently,
suppose that ${\color{cyan}p}\in \underline{\mu}$ and $\la = \mu - {\color{cyan}p}$. If $\cp$ is doubly covered and hence the two cups adjacent to $\cp$, $\ot, \gr\in \underline{\la}$ are non-commuting with $\gr \prec \ot$. Setting $\alpha=\la-\gr,\beta=\la-\ot$ then we have
	\begin{multline}
	(-1)^{\ka(\la,\mu)}(\underline{\la}\la\overline{\mu})(\underline{\mu}\la\overline{\la})
= (-1)^{b({\color{cyan}p})-1}\Bigg(
2
\!\! \sum_{   \begin{subarray}{c} \nu=\la-\mq \\ \mq \in (\underline{\mu} \cap \underline{\la}) \\ \cp \prec\mq,\cp \prec\prec\mq \end{subarray}}
\!\!
(-1) ^{b({\color{magenta}q})+\ka(\nu,\la) } (\underline{\la}\nu\overline{\nu})(\underline{\nu}\nu\overline{\la})+ \\
2  (-1)^{b(\ot)+\ka(\beta,\la) }  (\underline{\la}\beta\overline{\beta})(\underline{\beta}\beta\overline{\la}) + 
 (-1)^{b(\gr)+\ka(\alpha,\la) }  (\underline{\la}\alpha\overline{\alpha})(\underline{\alpha}\alpha\overline{\la})\Bigg),
 \label{SeldualK1}
\end{multline}
where throughout we refer to the set $\underline{\mu} \cap \underline{\la}$ to be the cups in $\underline{\mu}$ that commute with $\cp$ (and hence by \cref{commie2} are in $\underline{\la}$ also) and abbreviate ``adjacent to" simply as ``adj."
\end{prop}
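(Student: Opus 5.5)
We follow the same pattern as \cref{KcomR,KCR,KDC}: first reduce to the incontractible case, then verify the identity by direct surgery computation.

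\textbf{Reduction.} Both sides of \eqref{SeldualK1} are the image under $\Psi$ of \eqref{selfdualrelD}, up to the scalars $i^{2\ka(\cdot,\cdot)}=(-1)^{\ka(\cdot,\cdot)}$. We therefore want to show these signs are compatible with contraction.

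\begin{itemize}[leftmargin=*]
\item Suppose $\la,\mu$ are contractible at some $k$ with $(l_\cp,r_\cp)\neq(k-\frac12,k+\frac12)$. Apply $\tilde\Phi_k$ from \cref{KContraction homomorphism}, which is an algebra isomorphism preserving diagrams.
\item We check that each coefficient of the form $(-1)^{b(\cdot)+\ka(\cdot,\cdot)}$ is unchanged under contraction. This is exactly \cref{signs}, applied to $\mq$, $\ot$, $\gr$ and also to $\cp$: the left-hand sign becomes $(-1)^{b(\cp)-1-\ka(\la,\mu)}$ after moving it across.
\item The index set of the sum is preserved, except for a cup $\mq$ with $(l_\mq,r_\mq)=(k-\frac12,k+\frac12)$. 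Such a $\mq$ commutes with $\cp$ and lies in $\underline\mu\cap\underline\la$, but it does not cover or doubly cover $\cp$, so it never appears in the sum anyway.
\end{itemize}

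By induction we may then assume $\underline{\mu}\la$ is incontractible. By \cref{Incontractible cup diagrams}, since $\cp$ is doubly covered, this forces $\mu=(1,2,\dots,c)$. Here $\cp$ is undecorated for $c$ even and decorated for $c$ odd, and it is doubly covered by $\lfloor\frac{c-1}{2}\rfloor$ decorated cups $\mq_1,\dots,\mq_m$, all of which commute with $\cp$ except the innermost one $\vu$. By \cref{adj lem2} c), $\vu$ is replaced in $\underline\la$ by the concentric pair $\gr\prec\ot$.

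\textbf{Left-hand side.} We compute $(\underline{\la}\la\overline{\mu})(\underline{\mu}\la\overline{\la})$ by surgery.
\begin{itemize}[leftmargin=*]
\item Do surgery on the cups $\mq_j$ first, using \cref{idempot lem}. Each contributes a small anticlockwise circle and is simply deleted.
\item The remaining stacked diagram lives on the four vertices of $\cp\cup\vu$.
\item One surgery there is a merge. The other is a split of the resulting component into two circles, corresponding to $\gr$ and $\ot$. By \eqref{spliteq}, this produces a signed sum of two diagrams, in which either the $\gr$-circle or the $\ot$-circle is clockwise.
\end{itemize}
Those two diagrams are precisely $(\underline{\la}\alpha\overline{\la})$ and $(\underline{\la}\beta\overline{\la})$.

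\textbf{Right-hand side.} Each product $(\underline{\la}\nu\overline{\nu})(\underline{\nu}\nu\overline{\la})$, for $\nu=\la-\mq$, is a merge of two small circles followed by a split. By the same local computation it equals $\pm(\underline{\la}\nu\overline{\la})$, which is the circle diagram for $\la$ with the single circle through $\mq$ oriented clockwise.

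Hence the right-hand side of \eqref{SeldualK1} is a combination of the pairwise distinct basis elements $\underline\la\alpha\overline\la$, $\underline\la\beta\overline\la$ and $\underline\la\nu\overline\la$. The identity then amounts to three checks:
\begin{itemize}[leftmargin=*]
\item the coefficients of the terms $\underline\la\nu\overline\la$ from the $\mq_j$ must cancel in pairs or vanish;
\item the coefficient of $\beta$ must match, using that the factor $2$ on $\ot$ combines with a contribution from the decorated $\vu$-type term;
\item the coefficient of $\alpha$ must match the single remaining term.
\end{itemize}

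\textbf{Main obstacle.} The hard step is the sign bookkeeping. We must compute ${\sf sign}_D$ at the tags in the split and merge formulas \eqref{mergeeq} and \eqref{spliteq}, including the extra factor from \cref{get out of jail remark}, and track the parity of $\ka$ and $b$ for decorated cups in both parities of $c$.

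We expect the decorated cups $\mq_j$ to contribute terms with coefficient $2(-1)^{b(\mq_j)+\ka}$ which telescope, since consecutive decorated breadths and $\ka$ values shift by $1$ in a way that alternates the sign. We will first try $c=3,4,5$ by hand to pin down these signs, then argue by induction on $m$: adding an outer decorated cup, and reducing via contraction at $k=0$, which is allowed by \cref{signs}.
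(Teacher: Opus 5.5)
Your reduction to the incontractible case $\mu=(1,2,\dots,c)$ matches the paper. So does your left-hand side: after deleting the small circles, a merge and then a split on the four vertices of $\cp\cup\vu$ produce the two diagrams in which the $\gr$-circle, respectively the $\ot$-circle, is clockwise.

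\textbf{The gap is on the right-hand side.} It is false that $(\underline\la\nu\overline\nu)(\underline\nu\nu\overline\la)=\pm\underline\la\nu\overline\la$ for $\nu=\la-\mq$. Flipping $\mq$ in $\la$ removes two cups of $\underline\la$, not just $\mq$: it also removes the cup sharing a four-vertex circle with $\mq$ in $\underline\la\overline\nu$. The only exception is the outermost $\mq_x$. So after deleting small circles, each product is again a merge followed by a split into \emph{two} circles, and hence has two terms. Take $c=2x-1$ and write $E_s:=\underline\la(\la-s)\overline\la$ for $s\in\underline\la$. The paper's computation then gives:
\begin{itemize}
\item $E_\ot-E_\gr$ for the $\alpha$-term;
\item $E_\ot-E_{\mq_3}$ for the $\beta$-term;
\item $E_{\mq_j}-E_{\mq_{j+1}}$ for $\nu_j$ with $3\le j<x$;
\item the single term $E_{\mq_x}$ for $\nu_x$, because there the partner component is a line, which cannot be clockwise.
\end{itemize}
The left-hand product is $-(E_\gr+E_\ot)$.

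This two-term shape is the whole mechanism: the right-hand side telescopes. Your proposed source of cancellation does not work. The signs do not alternate: for decorated $\mq$ one has $b(\mq)=\tfrac12(l_\mq+r_\mq)=\ka(\la-\mq,\la)$, so every coefficient in the sum is $+2$. There is also no ``$\vu$-type'' term on the right, since $\vu$ does not commute with $\cp$.

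The actual signs are $(-1)^{b(\cp)-1}=1$, $(-1)^{\ka(\la,\mu)}=-1$, $(-1)^{b(\ot)+\ka(\beta,\la)}=1$ and $(-1)^{b(\gr)+\ka(\alpha,\la)}=-1$. With these, the right-hand side is $2\sum_{j=3}^{x-1}(E_{\mq_j}-E_{\mq_{j+1}})+2E_{\mq_x}+2(E_\ot-E_{\mq_3})-(E_\ot-E_\gr)=E_\gr+E_\ot$, as required. The extra factor $2$ on the $\ot$-term is absorbed by the $-E_\ot$ from the $\gr$-term and by the $+2E_{\mq_3}$ from the first telescoping term.

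Under your single-term description, each $\underline\la\nu_j\overline\la$ would appear exactly once on the right with coefficient $\pm 2$ and not at all on the left. None of your three checks could then succeed, and your later appeal to ``cancelling in pairs'' or ``telescoping'' is inconsistent with that description.

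Your fallback induction via contraction at $k=0$ is also unavailable. In the incontractible case no $\Phi_k$ applies, and the outer decorated cups sit at the right end, not at $(\tfrac12,\tfrac32)$.
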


\begin{proof}
Using \cref{signs}  and \cref{KContraction homomorphism}, we can assume that $\underline{\mu} {\la}$ is incontractible in what follows. We begin by calculating the signs in \eqref{SeldualK1}.
By our assumption that $\underline{\mu} {\la}$ is incontractible and $\mu=\la+\cp$,
 we have that   $\mu=(1,2,3,\dots,c)$ and $\la=(1,2,3,\dots,(c-1)^2)$.
 If $c$ is even then $\underline{\mu}\mu$ and $\underline{\la}\la$ are as pictured for reference in \cref{self dual cups2}.
\begin{figure}[ht!]
 
\caption{The cups diagrams $\underline{\mu}\mu$ and $\underline{\la}\la$, when $c$ is odd.  (Each diagram has $x=\frac{c+1}{2}$ cups in total.)
}
\label{self dual cups2}
\end{figure}
 If $c$ is odd then $\underline{\mu}\mu$ and $\underline{\la}\la$ are pictured in \cref{self dual cups3}.
\begin{figure}[ht!]
%
 
\caption{The cups diagrams $\underline{\mu}\mu$ and $\underline{\la}\la$, when $c$ is even.  (Both diagrams have $\frac{c}{2}$ cups in total.)
}
\label{self dual cups3}
\end{figure}
 
 We first perform explicit calculations in the case that
  $c=2x-1$ is odd. We will then remark on the small changes needed if $c$ is even at the end of this calculation. To do so, we start by calculating the signs in \eqref{SeldualK1}.
  In $\underline{\mu} $, the cup $\cp$ is doubly covered by a total of 
  $x-1$  decorated cups, $x-2$ of which also appear unchanged in $\underline{\la}$.  
    Hence, the set 
$$\{\mq_j \in (\underline{\mu} \cap \underline{\la}) \mid \cp \prec \prec\mq\}=\{\mq_{3},\dots,\mq_x\}$$ 
is such that $b(\mq_j)=\frac{1}{2}(\frac{4j-3}{2}+\frac{4j-1}{2})=4j-2$ for $3\leq j\leq x$. 
For $3\leq j\leq x$, setting  $\nu_j=\la-\mq_j$, we have that 
 $\ka(\nu_j,\la)=\frac{1}{2}(\frac{4j-3}{2}+\frac{4j-1}{2})=4j-2$ 
%
%
 We also have that $\cp \in \underline{\mu}$
    is adjacent to the two non-commuting cups $\gr,\ot \in\underline{\la}$, where $\gr \prec \ot$;  
we have that $b(\ot)=2, b(\gr)=1$ and that $\ka(\beta,\la)=\ka(\alpha,\la)=2$. 
Finally $\cp$ is the leftmost cup in $\underline{\mu}$  and so $\ka(\la,\mu)=1$. 

  In light of the above, \eqref{SeldualK1} simplifies  to the following 
\begin{equation}
\label{Dsum}
	(-1)(\underline{\la}\la\overline{\mu})(\underline{\mu}\la\overline{\la})
=
2 \sum_{j=3}^{x} (\underline{\la}\nu_j\overline{\nu_j})(\underline{\nu_j}\nu_j\overline{\la})
+2(\underline{\la}\beta\overline{\beta})(\underline{\beta}\beta\overline{\la}) -
 (\underline{\la}\alpha\overline{\alpha})(\underline{\alpha}\alpha\overline{\la}).
\end{equation}
The remainder of the proof is dedicated to checking \cref{Dsum}. 

In what follows we use $D_i$, for $i\in\mathbb{Z}_{\geq0}$, to denote the diagram $\underline{\la}(\la-{\color{blue}s}_i)\overline{\la}$ where ${\color{blue}s}_i$ is the cup satisfying $l_{{\color{blue}s}_i}=i$. Equivalently, $D_i$ is the diagram $\underline{\la}\la\overline{\la}$ with the small circle passing through node $i$ changed to be oriented clockwise.

\noindent{\bf The left-hand side of \cref{Dsum}. }
Inputting the half diagrams pictured in  \cref{self dual cups2} into the left-hand side of \cref{Dsum}, we obtain 
 \begin{align}  
 \label{Picpic1}
 &(\underline{\la}\la\overline{\mu})(\underline{\mu}\la\overline{\la})=
 \begin{minipage}{4cm}
 
\end{minipage}.
\intertext{Applying 
$(\tfrac{1}{2},\tfrac{3}{2})$-surgery to the diagram on the right of \eqref{Picpic1}, using the split surgery rules as given in \cref{spliteq}, we obtain 
}
&=(-1)^0\Bigg((-1)^1
\begin{minipage}{3.8cm}
%
 
\end{minipage}\Bigg)  =-D_2-D_1,
\label{Dsumboi}
\end{align}
 where in the 
  final equality we have used $D_i$ as defined above.

\smallskip
\noindent{\bf The right-hand side of  \cref{Dsum}. }
We proceed term-by-term on the right-hand side of \eqref{Dsum}, beginning with the terms associated with cups $\ps$ having the smallest $r_\ps$, and proceeding in increasing order of $r_\ps$.
We first calculate the terms \eqref{Dsum} labelled by the adjacent cups $\gr$ and $\ot$.
First by applying \cref{idempot lem} $x-1$ times we obtain that
 \begin{align}
 \label{picpicpic}
 & (\underline{\la}\la\overline{\alpha})(\underline{\alpha}\la\overline{\la})=
 \begin{minipage}{4cm}
 
\end{minipage}.
\intertext{Applying 
$(\tfrac{1}{2},\tfrac{3}{2})$-surgery to the right-hand side of \cref{picpicpic}, as in \eqref{spliteq} we obtain
}
&(-1)^0\Bigg((-1)
\begin{minipage}{4.5cm}
%
 
\end{minipage}\Bigg)=-D_{2}+D_{1}.
\label{Dsumboiadj}
\end{align}

Next, applying \cref{idempot lem} $x-1$ times, we obtain that
 \begin{align} 
 \label{picsallday}
 & (\underline{\la}\beta\overline{\beta})(\underline{\beta}\beta\overline{\la})=
 \begin{minipage}{5cm}
%
 
\end{minipage}.
\intertext{Performing $(\tfrac{7}{2},\tfrac{9}{2})$-surgery to the right-hand side of \eqref{picsallday} we obtain}
&(-1)^3\Bigg(
\begin{minipage}{4.5cm}
%
 
\end{minipage}\Bigg)=-D_{5}+D_{1}.
\label{Dsumboiadj2}
\end{align}

\noindent We now consider the terms from the sum on the right-hand side of \eqref{Dsum}.  
We start with the terms corresponding to all but the rightmost double covering cups, $\nu_j$ for $3\leq j \leq  x-1 $.
So for  $3\leq j \leq  x-1 $, by applying \cref{idempot lem} $x-1$ times we have that
 \begin{align}\label{pick}
 &(\underline{\la}\nu_j\overline{\nu_j})(\underline{\nu_j}\nu_j\overline{\la})=
 \begin{minipage}{5cm}
%
 
\end{minipage}.
\intertext{ Applying $(\tfrac{4j-1}{2},\tfrac{4j+1}{2})$-surgery to the right-hand side of \eqref{pick} we obtain:}
&(-1)^{2j-1}\Bigg(
\begin{minipage}{5.2cm}
%
 
\end{minipage}\Bigg)\nonumber\\
&=(-1)^{2j-1}(D_{2j+1}-D_{2j-1})=-D_{2j+1}+D_{2j-1}.
\label{DsumboiDC}
\end{align}

\noindent Finally, we consider the term corresponding to the final double covering cup $\nu_x$. Simply by applying \cref{idempot lem} $x-1$ times we have that 
\begin{equation}
(\underline{\la}\nu_x\overline{\nu_x})(\underline{\nu_x}\nu_x\overline{\la})=
 \label{DsumboiDCend}
 \begin{aligned}
 & 
 \begin{minipage}{4cm}
%
 
\end{minipage}=D_{2x-1}.
\end{aligned}
\end{equation}

\smallskip
\noindent{\bf Reconciling the left and right-hand sides of \cref{Dsum}. }
We plug \eqref{Dsumboi}, \eqref{Dsumboiadj}, \eqref{Dsumboiadj2},  \eqref{DsumboiDC}, \eqref{DsumboiDCend} into \eqref{Dsum} to obtain 
\begin{equation*}\begin{aligned}
\label{Dsumfinal}
&2 \sum_{j=3}^{x-1} (\underline{\la}\nu_j\overline{\nu_j})(\underline{\nu_j}\nu_j\overline{\la})+2(\underline{\la}\nu_{x}\overline{\nu_{x}})(\underline{\nu_{x}}\nu_{x}\overline{\la}))
+2(\underline{\la}\beta\overline{\beta})(\underline{\beta}\beta\overline{\la}) 
- (\underline{\la}\alpha\overline{\alpha})(\underline{\alpha}\alpha\overline{\la}).\\
 &=2 \sum_{j=3}^{x-1} (D_{2j-1}-D_{2j+1})+2D_{2x-1}
 +2(D_{1}-D_{5}) -(D_{1}-D_{2})\\
 &=2(D_{5}-D_{2x-1})+2D_{2x-1}+D_1-2D_5+D_2\\
 &=D_2+D_1=-(\underline{\la}\la\overline{\mu})(\underline{\mu}\la\overline{\la}),
 \end{aligned}
\end{equation*}
as required. Hence \eqref{SeldualK1} holds for $c$ odd.

Finally, we discuss the minor changes required in the case that $c$ is even (the corresponding cup diagrams for $\underline{\mu}\mu$ and $\underline{\la}\la$ are pictured in \cref{self dual cups3}). 
If $c$ is even, 
 the cup $\cp$ is undecorated, and there is a decorated strand to the right of the rightmost double covering cup $\mq_x$. 
This change affects the surgery, but {\em only by changing the signs} involved.
Repeating the above calculation with these sign changes, the result follows.
Specifically, the final term associated with the rightmost double covering cup $\mq_x$ is resolved via a split surgery. An equivalent example of this can be found in the proof \cref{selfdual2}, where these terms of the alternating sum cancel in an analogous manner.
While the case calculated in that proof assumes $c$ is odd and $k$ is even, the decorated strand to the right $\mq_x$, is the reason the cancellation is equivalent.
 \end{proof}

\begin{prop}[The self-dual relations: part 2] 
\label{selfdual2}
The equation \eqref{selfdualrelA} is preserved by $\Psi$. Equivalently,
let  ${\color{cyan}p}\in \underline{\mu}$ and $\la = \mu - {\color{cyan}p}$.  
If $\cp$ doesn't satisfy the conditions in \cref{selfdual1} then we have 
\begin{multline}
	(-1)^{\ka(\la,\mu)}(\underline{\la}\la\overline{\mu})(\underline{\mu}\la\overline{\la})
= (-1)^{b({\color{cyan}p})-1}\Bigg(
2
\!\! \sum_{   \begin{subarray}{c} \nu=\la-\mq \\ \mq \in (\underline{\mu} \cap \underline{\la}) \\ \cp \prec\mq,\cp \prec\prec\mq \end{subarray}}
\!\!
(-1) ^{b({\color{magenta}q})+\ka(\nu,\la) } (\underline{\la}\nu\overline{\nu})(\underline{\nu}\nu\overline{\la})\\
+\!\!\sum_{  \begin{subarray}{c} \sigma=\la-\gr  \\ \gr \, \text{adj.}\, {\color{cyan}p} \end{subarray}}
\!\!
 (-1)^{b(\gr)+\ka(\sigma,\la) }  (\underline{\la}\sigma\overline{\sigma})(\underline{\sigma}\sigma\overline{\la})\Bigg).
 \label{selfdualrelK2}
\end{multline}  
where throughout we refer to the set $\underline{\mu} \cap \underline{\la}$ to be the cups in $\underline{\mu}$ that commute with $\cp$ (and by \cref{commie2} are in $\underline{\la}$ also) and abbreviate ``adjacent to" simply as ``adj."
\end{prop}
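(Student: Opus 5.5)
We follow the strategy of \cref{selfdual1}. First, by \cref{signs} the signs $(-1)^{b(\mq)+\ka(\nu,\la)}$ are invariant under contraction. By \cref{KContraction homomorphism} and \cref{dilation}, it therefore suffices to treat the case where $\underline{\mu}\la$ is incontractible, i.e.\ $\cp$ is the unique cup of breadth one. Since $\cp$ is not doubly covered, \cref{Incontractible cup diagrams} leaves two families. In the first, $\cp$ is covered, with $\mu=(1,2,\dots,c^k)$ and $\la=(1,2,\dots,c^{k-1},c-1)$, split by the parities of $c$ and $k$ and by whether $c\geq k$ or $c<k$. In the second, $\cp$ is neither covered nor doubly covered, with $\mu\in\{(1),(1,2),(1^2),(1^c)\}$. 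The case $c<k$ reduces to the case $c\geq k$ by \cref{exercise}, since the extra strands to the left of $\cp$ are unaffected by flipping $\cp$ or any cup adjacent to it.

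The uncovered family is a short direct check. Here $\cp$ is adjacent to at most one cup (\cref{adj lem0,adj lem1}) and the commuting sum is empty. We compute $(\underline{\la}\la\overline{\mu})(\underline{\mu}\la\overline{\la})$ by a single split or merge at $\cp$ using \eqref{spliteq} or \eqref{mergeeq}, after deleting small circles via \cref{idempot lem}. We then compare it with the at most one term $(\underline{\la}\sigma\overline{\sigma})(\underline{\sigma}\sigma\overline{\la})$ on the right-hand side. When there is no adjacent cup, both sides should vanish; this is consistent with a split producing only a line and a circle that cannot both be oriented clockwise.

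For the covered family, write $D_i=\underline{\la}(\la-{\color{blue}s}_i)\overline{\la}$ as before and express every term as a signed sum of the $D_i$.
\begin{itemize}
\item On the left-hand side, applying the surgery at $\cp=(\tfrac12,\tfrac32)$ is a split. It should produce $\pm(D_{a}+D_{b})$, where $a,b$ are the left endpoints of the two cups $\gr,\ot\in\underline{\la}$ adjacent to $\cp$; these cups are commuting and non-concentric by \cref{adj lem2} a), b).
\item Each adjacent term $(\underline{\la}\sigma\overline{\sigma})(\underline{\sigma}\sigma\overline{\la})$ becomes, after applying \cref{idempot lem} $x-1$ times, one split surgery yielding a difference $D_{\ast}-D_{\ast}$.
\item Each term for a cup $\mq_j$ covering or doubly covering $\cp$ similarly gives a difference $D_{\ell_j}-D_{\ell_{j'}}$, where $\ell_j, \ell_{j'}$ are the left endpoints of consecutive small circles. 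The exception is the outermost such cup, which gives a single $D$ (or a difference when a decorated ray sits on the far right, i.e.\ $c-k$ odd).
\end{itemize}
After inserting $\ka(\la,\mu)=1$ and the explicit values $b(\mq_j)$ and $\ka(\nu_j,\la)$ (computed as in \cref{selfdual1}), the factor $2$ on the covering terms makes the sum telescope, leaving exactly $D_a+D_b$ with the correct sign.

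The main obstacle is the sign bookkeeping in the split surgeries. One must track ${\sf sign}_{D'}$ at the two feet of each surgery, the modified global sign of \cref{get out of jail remark}, and the parity shifts between $b$ and $\ka$ for decorated versus undecorated cups, which differ between the four parity cases of $(c,k)$. I would do the case $c$ odd, $k$ even explicitly (where the decorated ray on the right forces the outermost term to be a split rather than a single $D$) and check that the telescoping still closes. I would then argue that the remaining parity cases differ only by decorations on arcs, which flip both orientation labels at an arc and hence leave the merge and split signs unchanged.
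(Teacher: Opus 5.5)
Your strategy is the one the paper uses: contract via \cref{signs} and \cref{KContraction homomorphism} to the incontractible case, expand every term in the $D_i$, let the factor $2$ on the covering terms telescope, and check the shapes of \cref{Inconcup4} by hand. As written, however, your computation would not close, because the data you plan to insert belong to the doubly covered configuration of \cref{selfdual1}, not to the covered one.

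For $\mu=(1,2,\dots,c^k)$ the cup $\cp$ is the innermost of $k$ nested cups, the outermost being $\mq_k=(\tfrac12,\tfrac{4k-1}{2})$. So $\cp=(\tfrac{2k-1}{2},\tfrac{2k+1}{2})$, not $(\tfrac12,\tfrac32)$. The left-hand split therefore carries $(-1)^{\lfloor l_\cp\rfloor}=(-1)^{k-1}$ and yields $(-1)^k(D_{k+1}+D_{k-1})$. Likewise $\ka(\la,\mu)=k$, not $1$, so the prefactor on the left is $(-1)^k$. The other constants also differ from those in \cref{selfdual1}:
the surviving nested cups $\mq_3,\dots,\mq_k$ have $b(\mq_j)=j$ and $\ka(\nu_j,\la)=k$;
the doubly covering cups have $b(\mq_j)=\ka(\nu_j,\la)=2j-1$;
the two adjacent cups give $\ka(\sigma_1,\la)=k-1$ and $\ka(\sigma_2,\la)=k+1$.
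After dividing by $(-1)^k$ these give the coefficients $2(-1)^j$, $2(-1)^k$ and $+1$ on the three kinds of terms, as in \eqref{SeldualK3}. With $\ka(\la,\mu)=1$, and everything else correct, the two sides would differ by a global sign for even $k$, which is exactly the case you propose to do explicitly.

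Your prediction for the boundary term is also wrong, and this is the term that closes the telescope. The outermost doubly covering cup $\mq_{x'}$ contributes a single $D_{2x'-1}$ in every parity case. With a decorated ray on the right (your case $c$ odd, $k$ even), the last surgery splits a circle off a line, and the summand orienting the line clockwise is $0$. Without the ray, no surgery is needed at all. This single term cancels the top end of the doubly covering chain $\sum_j(D_{2j+1}-D_{2j-1})$. The bottom end $D_{2k+1}$ of that chain cancels against the $\mq_k$ term $\pm(D_{2k+1}-D_1)$, which then runs down the nested chain to $D_{k-2}$, and $D_{k-2}$ is killed by the two adjacent terms. A difference at the top would leave an uncancelled $D$.

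There are also three smaller points.
\begin{enumerate}
\item For $\mu=(1)$ the left-hand side vanishes because the surgery reconnects two non-propagating lines, which is zero by \eqref{reconeq}. It is not a split: a split of a line into a line and a circle gives a nonzero single term, as happens for $\mu=(1,2)$.
\item You omit the case $c=k$. There are no doubly covering cups, and the $\mq_k$ term is itself the single boundary $D_1$.
\item You cannot dismiss the other parity cases on the grounds that decorations leave the surgery signs unchanged. As the paper notes, these modifications do change individual signs, and they also change $b$ and whether the outermost term needs a surgery. What must be rechecked is that the same cancellation pattern survives.
\end{enumerate}
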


\begin{proof}
By \cref{signs}  and \cref{KContraction homomorphism}, we can assume that $\underline{\mu} {\la} $ is incontractible in what follows.  We begin by calculating the signs in \eqref{selfdual2}.
By our assumption $\underline{\mu} {\la}$ is incontractible and $\mu=\la+\cp$, we have that $\mu=(1,2,3,\dots,c^k)$ for some $k$ and $b(\cp)=1$
and $\la=(1,2,3,\dots,c^{k-1},c-1)$.
 If $c$ is odd, $k$ is even and $c\geq k$ then $\underline{\mu}\mu$ and $\underline{\la}\la$ are as pictured in \cref{self dual cups}.
 If $c$ is even and $k$ is odd or $c< k$ then $\underline{\mu}\mu$ is pictured in \cref{Inconcup2,Inconcup1}.

\begin{figure}[ht!]
 
\caption{The cups diagrams $\underline{\mu}\mu$ and $\underline{\la}\la$, when $c$ is odd and $c>k$. (Each diagram has $x'=\frac{c+1}{2}$ cups.)}
\label{self dual cups}
\end{figure}

 We will first perform explicit calculations in the case that $c> k$, $k$ is even and that
  $c=2x'-1$ is odd, and we will remark on the small changes needed in the other cases at the end of the calculation. 
 In $\underline{\mu}$ the cup 
$\cp$ is covered by $k-1$ cups, $k-2$ of which appear unchanged in $\underline{\la}$. All of these $k-2$ cups are undecorated except for the largest, $\mq_k$, with $l_{\mq_k}=\frac{1}{2}, r_{\mq_k}=\frac{4k-1}{2}$.
Hence, the set 
$$\{\mq_j \in (\underline{\mu} \cap \underline{\la}) \mid \cp \prec\mq\}=\{\mq_3,\dots,\mq_k\}$$ 
satisfies $b(\mq_j)=\frac{1}{2}(\frac{2k+2j-1}{2}-\frac{2k-2j+1}{2}+1)=j$ for $3\leq j\leq k$.
Setting $\nu_j=\la-\mq_j$, we also have that, for $3\leq j\leq k$, $\ka(\nu_j,\la)=k=\ka(\la,\mu)$.
Additionally in $\underline{\mu}$ the cup 
$\cp$ is  doubly covered by $x'-k$ decorated cups all of which appear unchanged in $\underline{\la}$. 
Therefore, the set
$$\{\mq_j \in (\underline{\mu} \cap \underline{\la}) \mid \cp \prec \prec\mq\}=\{\mq_{k+1},\dots,\mq_{x'}\}$$ 
is such that $b(\mq_j)=\frac{1}{2}(\frac{4j-3}{2}+\frac{4j-1}{2})=2j-1$ for $k+1\leq j\leq x'$.
Using the same notation as above, we set  $\nu_j=\la-\mq_j$ and we have that, for $k+1\leq j\leq x'$, $\ka(\nu_j,\la)=\frac{1}{2}(\frac{4j-3}{2}+\frac{4j-1}{2})=2j-1$.
The cup 
$\cp$ is adjacent, from left to right, to two commuting cups $\gr_1,\gr_2\in \underline{\la}$. 
We have that $b(\gr_1)=b(\gr_2)=1$ and
setting 
$\sigma_j=\la-\gr_j$, we have that  $\ka(\sigma_1,\la)=\frac{1}{2}(l_\gr+r_\gr)=\frac{1}{2}((l_\cp-1)+l_\cp)=\frac{1}{2}((r_\cp-1)-1+l_\cp)=\ka(\la,\mu)-1$ and (similarly) $\ka(\sigma_2,\la)=\ka(\la,\mu)+1$.
Finally,  $\ka(\la,\mu)=k$.
     In light of all the above, \eqref{selfdualrelK2} simplifies to the following 
 \begin{equation}
	(\underline{\la}\la\overline{\mu})(\underline{\mu}\la\overline{\la})
=
2 \sum_{j=3}^k (-1)^{j} (\underline{\la}\nu_j\overline{\nu_j})(\underline{\nu_j}\nu_j\overline{\la})+2(-1)^{k} \sum_{j=k+1}^{x'}(\underline{\la}\nu_j\overline{\nu_j})(\underline{\nu_j}\nu_j\overline{\la})+
\sum_{j=1}^2 
(\underline{\la}\sigma_j\overline{\sigma_j})(\underline{\sigma_j}\sigma_j\overline{\la}) 
 \label{SeldualK3}
\end{equation}
The remainder of the proof is dedicated to checking \eqref{SeldualK3}.

\noindent{\bf The left-hand side of \cref{SeldualK3}. }
Inputting the half diagrams pictured in  \cref{self dual cups} into the left-hand side of \eqref{SeldualK3} we obtain that $(\underline{\la}\la\overline{\mu})(\underline{\mu}\la\overline{\la})$ is equal to
\begin{align} 
\label{pictureofC}
&\begin{minipage}{7.4cm}
 
\end{minipage},
%
%
%
%
%
%
%
%
%
%
%
\intertext{where the equality follows from 
applying  \cref{idempot lem} $x'-1$ times. Next, applying 
$(\frac{2k-1}{2},\frac{2k+1}{2})$-surgery to the diagram on the right of \eqref{pictureofC}, as in \cref{spliteq}, we obtain 
}
 &(-1)^{k-1}\Bigg( (-1)^1
\begin{minipage}{5.8cm}
%
 
\end{minipage} 
\Bigg)\nonumber\\
&
= (-1)^{k}(D_{k+1}+D_{k-1}),
\label{sumboiL}
\end{align}
where we have used the same $D_i$ notation as defined during the proof of \cref{selfdual1}.

\smallskip
\noindent{\bf The right-hand side of  \cref{SeldualK3}. }
We proceed term-by-term on the right-hand side of \cref{SeldualK3}, beginning with the terms associated to cups $\ps$ with minimal $r_\ps$, and proceeding in increasing order of $r_\ps$.
We first start with the terms labelled by the adjacent cups $\gr_1, \gr_2$, where $\sigma_i=\la-\gr_i$. First, we obtain that 
 \begin{align} 
 \label{adj1SDpic}
 &(\underline{\la}\sigma_1\overline{\sigma_1})(\underline{\sigma_1}\sigma_1\overline{\la})=\begin{minipage}{6cm}
 
\end{minipage}
\intertext{where the equality comes from applying \cref{idempot lem} $x'-1$ times. Performing $(\frac{2k-5}{2},\frac{2k-3}{2})$-surgery to the diagram on the right-hand side of \eqref{adj1SDpic}, we obtain}
&(-1)^{k-3}\Bigg((-1)
\begin{minipage}{5.7cm}
%
 
\end{minipage}\Bigg)\nonumber\\
&=(-1)^{k-2}(D_{k-1}-D_{k-2}).
\label{sumboi1}
\end{align}

\noindent and an almost identical calculation shows that:
 \begin{align} 
 &(\underline{\la}\sigma_2\overline{\sigma_2})(\underline{\sigma_2}\sigma_2\overline{\la})=(-1)^{k+1}\Bigg(
\begin{minipage}{5.5cm}
%
 
\end{minipage}\Bigg)\nonumber\\
&=(-1)^{k+1}(D_{k-2}-D_{k+1}).
  \label{sumboi2}
 \end{align}

\noindent Next, we'll calculate each of the terms of the right-hand side of \cref{SeldualK3} labelled by the covering cups, $\mq_j$ for $3\leq j\leq k-2$, where $\nu_j=\la-\mq_j$. By applying \cref{idempot lem} $x'-1$ times, we have that for $3\leq j \leq k-2$, $(\underline{\la}\nu_j\overline{\nu_j})(\underline{\nu_j}\nu_j\overline{\la})$ is equal to:
\begin{align}
\label{coverforme}
&\begin{minipage}{8cm}
%
 
\end{minipage}.
\intertext{Then by performing $(\frac{2k-2j-1}{2},\frac{2k-2j+3}{2})$-surgery to the diagram on right-hand side of \eqref{coverforme} we obtain}
&(-1)^{k-j-1}\Bigg(-
\begin{minipage}{7.4cm}
%
 
\end{minipage}\Bigg)\nonumber\\
&=(-1)^{k-j-1}(-D_{k-j+1}+D_{k-j}).
\label{sumboi4}
\end{align}

\noindent Now, we treat each of the terms of the right-hand side of \cref{SeldualK3} corresponding to the final two covering cups, $\mq_j$ for $j=k-1,k$, where $\nu_j=\la-\mq_j$. These calculations differ slightly from the previous terms. First, the $j=k-1$ term involves a decorated split as the final surgery; by applying \cref{idempot lem} $x'-1$ times $(\underline{\la}\nu_{k-1}\overline{\nu_{k-1}})(\underline{\nu_{k-1}}\nu_{k-1}\overline{\la})$ is equal to:
\begin{align}
\label{bigcoveringpics2}
&\begin{minipage}{7.5cm}
 
\end{minipage}.
\intertext{Then performing $(\frac{1}{2},\frac{3}{2})$-surgery to the diagram on the right-hand side of \eqref{bigcoveringpics2} gives}
&=(-1)^{0}\Bigg((-1)^1
\begin{minipage}{6.5cm}
%
 
\end{minipage}\Bigg)\nonumber\\
&=-D_2+D_{1}.
\label{sumboiedgecase}
\end{align}

\noindent We now calculate the $j=k$ term associated to the largest covering cup, $\nu_k$. Following the same process as above, applying \cref{idempot lem} $x'-1$ times, we get that $(\underline{\la}\nu_k\overline{\nu_k})(\underline{\nu_k}\nu_k\overline{\la})$ is equal to:
\begin{align}
\label{bigcoveringpics}
&\begin{minipage}{7.5cm}
%
 
\end{minipage}.
\intertext{Then performing $(\frac{4k-1}{2},\frac{4k-1}{2})$-surgery to the diagram on the right-hand side of \eqref{bigcoveringpics} gives}
&=(-1)^{2k-1}\Bigg(
\begin{minipage}{6.5cm}
%
 
\end{minipage}\Bigg)\nonumber\\
&=(-1)^{2k-1}(D_{2k+1}-D_{1}).
\label{sumboi}
\end{align}

\noindent Finally, we consider the terms labelled by doubly-covering cups, $\mq_j\in \underline{\mu}$ for $k+1\leq j\leq x'-1$, where $\nu_j=\la-\mq_j$. For $k+1\leq j\leq x'-1$, by applying \cref{idempot lem} $x'-1$ times we obtain that, $(\underline{\la}\nu_j\overline{\nu_j})(\underline{\nu_j}\nu_j\overline{\la})$ is equal to:
\begin{align}
\label{doublecoversmajority}
&\begin{minipage}{8.3cm}
%
 
\end{minipage}
\intertext{Performing $(\frac{4j-1}{2},\frac{4j+1}{2})$-surgery to the diagram on the right-hand side of \eqref{doublecoversmajority} we obtain}
&(-1)^{2j-1}\Bigg(
\begin{minipage}{6.6cm}
%
 
\end{minipage}\Bigg)\nonumber\\
&=(-1)^{2j-1}(D_{2j+1}-D_{2j-1}).
\label{sumboi3}
\end{align}

\noindent We now consider the final term, associated to the cup $\nu_{x'}\in \underline{\mu}$. There are no cups to the right of $\mq_{x'}$ and so we apply \cref{idempot lem} $x'-1$ times, merging small anti-clockwise circles, and then perform $(\frac{4x'-1}{2},\frac{4x'+1}{2})$-surgery to see that $(\underline{\la}\nu_j\overline{\nu_j})(\underline{\nu_j}\nu_j\overline{\la})$ is equal to
\begin{align}
&\begin{minipage}{7.3cm}
%
 
\end{minipage}\Bigg)\nonumber\\
&=D_{2x'-1},
\label{oneterm}
\end{align}
where the $0$ in the first equality comes from the fact that one cannot orient a line clockwise.

\smallskip
\noindent{\bf Reconciling the left and right-hand sides of \cref{SeldualK3}. }
We plug the terms from \eqref{sumboiL}, \eqref{sumboi1}, \eqref{sumboi2}, \eqref{sumboi4}, \eqref{sumboiedgecase}, \eqref{sumboi}, \eqref{sumboi3} and \eqref{oneterm} into \eqref{SeldualK3} to obtain:
\begin{equation*}\begin{aligned}
&2 \sum_{j=3}^k (-1)^{j} (\underline{\la}\nu_j\overline{\nu_j})(\underline{\nu_j}\nu_j\overline{\la})+2(-1)^{k} \sum_{j=k+1}^{x'}(\underline{\la}\nu_j\overline{\nu_j})(\underline{\nu_j}\nu_j\overline{\la})+
\sum_{j=1}^2 
(\underline{\la}\sigma_j\overline{\sigma_j})(\underline{\sigma_j}\sigma_j\overline{\la}) \\
=&2\Big(\sum_{j=3}^{k-2} (-1)^{j} (-1)^{k-j-1}(-D_{k-j+1}+D_{k-j})\Big) +2(-1)^{k-1}(-D_2+D_1)+ 2(-1)^{3k-1}(D_{2k+1}-D_1) \\
&+2\Big((-1)^{k}\sum_{j=k+1}^{x'-1}(-1)^{2j-1}(D_{2j+1}-D_{2j-1})+(-1)^{k}D_{2x'-1}\Big)
+(-1)^{k-2}(D_{k-1}-D_{k-2}) + (-1)^{k+1}(D_{k-2}-D_{k+1})\\
=&(-1)^{k-1}\Bigg(2\Big( \sum_{j=3}^{k-1}(D_{k-j}-D_{k-j+1})\Big) + 2(D_{2k+1}-D_1) \\
&+2\Big(\sum_{j=k+1}^{x'-1}(D_{2j+1}-D_{2j-1})\Big)+2D_{2x'-1}
-(D_{k-1}-D_{k-2}) +(D_{k-2}-D_{k+1})\Bigg)\\
=&(-1)^{k-1}\Big(2D_{1}-2D_{k-2} + 2D_{2k+1}-2D_1
 +2D_{2x'-1}-2D_{2k+1}-2D_{2x'-1}\\
&-D_{k-1}+D_{k-2} +D_{k-2}-D_{k+1}\Big)\\
=&(-1)^{k}(D_{k+1}+D_{k-1})=(\underline{\la}\la\overline{\mu})(\underline{\mu}\la\overline{\la})
\end{aligned}
\end{equation*}
as required and hence \cref{selfdualrelK2} holds for $c> k$, $c$ odd and $k$ even.
  
We now discuss the minor changes required if $c$ is even, if $c$ and $k$ have the same parity, or if $c \leq k$.
\begin{itemize}
\item If $c< k$: The final diagram has $k-c$ rays on the left-hand side, which do not materially affect the multiplication. 
\item If $c=k$, There are no cups that double cover $\cp$. In this case, it is straightforward to check that \eqref{sumboi} is equal to $D_1$ (for the same reason that \eqref{oneterm} is equal to $D_{2x'-1}$). The resulting alternating sum cancels just as above.
\item If $c=2x$ is even (and $k$ is odd): The cup diagrams $\underline{\mu}\mu$ and $\underline{\la}\la$ are as pictured on the left side of \cref{Inconcup1}. 
\item If $c$ and $k$ have the same parity: There is no decorated strand to the right of the rightmost cup $\mq_{x'}\in \mu$ doubly covering $\cp$. 
\end{itemize}
The modifications in the latter two cases affect the surgery calculations, but {\em only by changing the signs}, and the resulting alternating sum cancels similarly. In particular, if $c$ and $k$ have the same parity, because there is no decorated strand to the right, the final terms of the alternating sum cancel in a manner analogous to the odd $c$ case detailed in \cref{selfdual1}.

\smallskip 
\noindent{\bf Degenerate cases. } We now deal with the remaining incontractible cases. These are the cases that are pictured in \cref{Inconcup4}, for which $\cp\in\underline{\mu}$ isn't covered or doubly covered. 
If we are one of the other cases pictured in \cref{Inconcup4}, then $\cp$ is adjacent to one cup. If $\mu=(1,2)\in\mathscr{P}_4$ then the left-hand side of \eqref{selfdualrelK2} becomes
\begin{equation}
 (-1)^0(\underline{\la}\la\overline{\mu})(\underline{\mu}\la\overline{\la})=
 \begin{minipage}{1.8cm}
 
\end{minipage}-0\Bigr)=D_2,
\label{baby1}
\end{equation}
and the right-hand side of \eqref{selfdualrelK2} becomes

\begin{equation} (-1)^2(\underline{\la}\la\overline{\sigma})(\underline{\sigma}\la\overline{\la})
=\begin{minipage}{1.8cm}
				%
 
\end{minipage}-0\Bigr)=D_2.
\label{baby2}
\end{equation}
noting that in both cases, we pick up a term equal to $0$ because the line could not be oriented clockwise, and hence \eqref{baby1} and \eqref{baby2} are equal, as required.

If $\mu=(1^c)$, $c\geq 2$, then the result follows from a similar calculation splitting a propagating strand.
Finally, it is also easy to check that if $\cp$ isn't adjacent to any cups (equivalently $\mu=(1)$), then performing surgery on $(\underline{\la}\la\overline{\mu})(\underline{\mu}\la\overline{\la})$ involves reconnecting two non-propagating strands which is defined to be $0$ using \cref{reconeq}.
This concludes all the possible cases and hence \eqref{selfdualrelK2} holds as claimed.
\end{proof}

Finally, we will check that the last relation, the adjacent relation, is also preserved by $\Psi$. As with the previous relations, it will be significantly easier to perform all calculations on incontractible diagrams of minimal rank. In order to be able to do this, we will need to first prove a few preliminary results relating to the powers of $i$ in \eqref{adjacentcup}. We start by defining a constant for a pair of cups related to these terms.

\begin{lem}
\label{signsanothergo}
Let $\mu,\la,\alpha \in\mptn$ be such that $\la=\mu-\cp$ and $\alpha=\mu-\mq$ for two cups $\cp,\mq\in\underline{\mu}$. 
Define $$C(\cp,\mq)\coloneqq 2\biggl(2b(\mq)+ \ka(\alpha,\mu)-\ka(\la,\mu)\biggr).$$ 
Then we have that
\begin{equation}
\label{sgn21}
 C(\cp,\mq)= 
\left\{ \begin{array}{ll} 
(3r_{\mq}-l_{\mq}-r_\cp-l_\cp)+2&
 \mbox{if $\mq$ is undecorated,
 }
  \\ 
  (3r_{\mq}+3l_{\mq}-r_\cp-l_\cp)
& \mbox{if $\mq$ is decorated.
} 
  \end{array} \right.
\end{equation}
\end{lem}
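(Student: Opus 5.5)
This is a direct substitution of definitions, so no earlier structural results are needed. The only inputs are the definition of breadth and \cref{suppminusdefn}. Since $\la=\mu-\cp$ and $\alpha=\mu-\mq$, the latter gives $\ka(\la,\mu)=\tfrac{1}{2}(l_\cp+r_\cp)$ and $\ka(\alpha,\mu)=\tfrac{1}{2}(l_\mq+r_\mq)$. Only $\ka(\la,\mu)$ involves $\cp$, and it enters with a fixed sign independent of the decoration of $\cp$. So I will split into two cases according to whether $\mq$ is decorated, and not according to $\cp$.

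\textbf{Case 1: $\mq$ undecorated.} Here $b(\mq)=\tfrac{1}{2}(r_\mq-l_\mq+1)$, so $2b(\mq)=r_\mq-l_\mq+1$. Then
\[
C(\cp,\mq)=2\Bigl(r_\mq-l_\mq+1+\tfrac{1}{2}(l_\mq+r_\mq)-\tfrac{1}{2}(l_\cp+r_\cp)\Bigr)=3r_\mq-l_\mq-r_\cp-l_\cp+2 .
\]

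\textbf{Case 2: $\mq$ decorated.} Here $b(\mq)=\tfrac{1}{2}(r_\mq+l_\mq)$, so $2b(\mq)=r_\mq+l_\mq$. Then
\[
C(\cp,\mq)=2\Bigl(r_\mq+l_\mq+\tfrac{1}{2}(l_\mq+r_\mq)-\tfrac{1}{2}(l_\cp+r_\cp)\Bigr)=3r_\mq+3l_\mq-r_\cp-l_\cp .
\]

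Both cases agree with \eqref{sgn21}. The only point needing care is applying the undecorated or decorated breadth formula according to $\mq$ alone. It is also worth noting, with a view to later use, that $C(\cp,\mq)$ is an integer: $l,r\in\tfrac{1}{2}+\mathbb{Z}$ implies $r_\mq\pm l_\mq$ and $r_\cp+l_\cp$ are integers.
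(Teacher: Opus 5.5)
Your proposal is correct and follows the same approach as the paper, whose proof simply says the identity follows immediately from the definitions of breadth and $\kappa$. Your two-case substitution, split by whether $\mq$ is decorated, is exactly that computation, and your arithmetic in both cases matches \eqref{sgn21}.
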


\begin{proof}
This follows immediately from the defintions of breadth and $\ka(\la,\mu)$ (and $\ka(\alpha,\mu)$).
\end{proof}
In what follows, it will frequently be necessary to track whether a cup is decorated. To this end, we define
$$
\delta_{\text{dec},\cp}=\begin{cases}
1 &\mbox{\text{if $\cp$ is decorated}}\\
0 &\mbox{\text{if $\cp$ is undecorated.}}
\end{cases}
$$
We're interested in this constant $C(\cp,\mq)$ specifically because of what it tells us about the signs in \eqref{K adj}. Consequently, we first consider the difference between values of $C(\cp,\mq)$ and $C(\gr,\ot)$, where $\gr,\ot\in\underline{\mu}$ are adjacent to a cup $\cp\in\underline{\mu}$.
\begin{lem}
\label{signsanothergo222}
Let $\mu, \la \in \mptn$ be such that $\la=\mu-\cp$ for $\cp\in\underline{\mu}$. Let $\gr,\ot \in \underline{\la}$ be adjacent to $\cp$.
Additionally suppose that $\langle \cp\cup \ot\rangle_\mu$ exists (and hence $\underline{\mu}\mu$ and $\underline{\la}\la$ are as pictured in \cref{adj lem2}).
Let $x_1< x_2< x_3< x_4\in\mathbb{Z}_{\geq0}+\frac{1}{2}$ be the x-coordinates of the vertices of $\cp,\langle \cp\cup \ot\rangle_\mu\in\underline{\mu}$ (and hence of $\gr,\ot\in\underline{\la}$).

Then we have that:
\begin{equation}
\label{sgn222}
 C(\cp,\langle \cp\cup \ot\rangle_\mu)-C(\gr,\ot)= 4\biggl(x_1\alpha_{x_1}+x_2\alpha_{x_2}+x_3\alpha_{x_3}+x_4\alpha_{x_4}\biggr)+2(\delta_{\text{dec},\ot}-\delta_{\text{dec},\langle \cp\cup \ot\rangle_\mu}),
\end{equation}
where the $\alpha_i$'s are given as follows:

\begin{minipage}{.55\textwidth}
  \begin{align}
  \label{hello111111}
&\alpha_{x_1}=
\begin{cases}
1 & \mbox{if $l_\ot=r_\cp$ and $\delta_{\text{dec},\ot}\neq\delta_{\text{dec},\langle \cp\cup \ot\rangle_\mu}$},\\
-1 & \mbox{if  $l_\ot=l_\cp$ and $\delta_{\text{dec},\ot}=\delta_{\text{dec},\langle \cp\cup \ot\rangle_\mu}$},\\
0&\mbox{o/w,}
\end{cases} \\
\label{hellox222222}
    &\alpha_{x_2}=
\begin{cases}
-1 & \mbox{if $r_\ot=l_\cp$},\\
0&\mbox{o/w,}
\end{cases} 
  \end{align}
\end{minipage}
\quad\qquad
\begin{minipage}{.4\textwidth}
  \begin{align}
&\alpha_{x_3}=
\begin{cases}
1 & \mbox{if $l_\ot=l_\cp$ },\\
0&\mbox{o/w,}
\end{cases} \\
\label{hello5555}
    &\alpha_{x_4}=
\begin{cases}
1 & \mbox{if $r_\ot=l_\cp$},\\
0&\mbox{o/w.}
\end{cases} 
  \end{align}
\end{minipage}

\end{lem}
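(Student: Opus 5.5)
The plan is a direct computation from \cref{signsanothergo}. I will write $\vu=\langle \cp\cup \ot\rangle_\mu$. Both $C(\cp,\vu)$ and $C(\gr,\ot)$ are linear expressions in the endpoints of the cups involved, with the form depending only on whether the second cup is decorated. The first step is therefore to expand
\[
C(\cp,\vu)-C(\gr,\ot)
\]
using \eqref{sgn21} for each term. Here $C(\gr,\ot)$ is read with respect to $\la$, so that $\gr$ plays the role of the flipped cup and $\ot$ the role of the second cup. The expansion produces the leading $3r-l$ or $3r+3l$ pieces for $\vu$ and $\ot$, the subtracted $r+l$ pieces for $\cp$ and $\gr$, and a constant $+2$ for each undecorated second cup. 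The constants combine to $2(1-\delta_{\text{dec},\vu})-2(1-\delta_{\text{dec},\ot})=2(\delta_{\text{dec},\ot}-\delta_{\text{dec},\vu})$, which is exactly the correction term in \eqref{sgn222}.

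The second step is to express every endpoint in terms of $x_1<x_2<x_3<x_4$, splitting into the two geometric configurations of \cref{adj lem2}. In the covered case, $\cp=(x_2,x_3)$ and $\vu=(x_1,x_4)$, and the adjacent cups are $(x_1,x_2)$ and $(x_3,x_4)$. Whichever of these is $\ot$ (so $l_\ot=l_\vu$ or $r_\ot=r_\vu$ by \cref{gen def}), the other is $\gr$. In the doubly covered case, $\cp=(x_3,x_4)$ and $\vu=(x_1,x_2)$ is decorated, and $\ot$ is the outer cup of the concentric pair. In each case I record the decorations of $\cp,\vu,\gr,\ot$ from the pictures in \cref{adj lem2}, as extended by \cref{general}, and substitute them into the expansion.

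The third step is to collect coefficients of each $x_j$ and check that they equal $4\alpha_{x_j}$ with the stated case rules. For instance, when $r_\ot=l_\cp$, the contributions should give $-4x_2+4x_4$, matching $\alpha_{x_2}=-1$ and $\alpha_{x_4}=1$. When $l_\ot=l_\cp$, the contributions should give $+4x_3$, together with $-4x_1$ or $+4x_1$ according to whether the decorations of $\ot$ and $\vu$ agree.

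I expect the main obstacle to be the bookkeeping of decorations. The $x_1$ coefficient depends on whether $\delta_{\text{dec},\ot}=\delta_{\text{dec},\vu}$, because decorated second cups contribute $3l$ rather than $-l$ (a jump of $4x_1$). So I must enumerate carefully every sub-case of \cref{adj lem2} a)–c): which cups are decorated, and whether $\ot$ is the left or the right adjacent cup. Cases where the prescribed conditions overlap or fail to appear must be matched against the ``o/w'' clauses. I would organise this as a small table, with one row per configuration, listing $(l,r)$ and the decoration of each of $\cp,\vu,\gr,\ot$ and the resulting coefficients, and verify each row against \eqref{hello111111}–\eqref{hello5555}.
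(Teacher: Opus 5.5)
Your plan is the same as the paper's proof, which is a one-line case check: expand both $C$'s with \cref{signsanothergo}, substitute the configurations of \cref{adj lem2}, and compare coefficients. Your constant term and your covered case ($\cp=(x_2,x_3)$, $\vu=(x_1,x_4)$) are correct. There one gets $4x_4-4x_2$ when $\ot=(x_1,x_2)$, and $0$ or $4x_1-2$ (as $\vu$ is undecorated or decorated) when $\ot=(x_3,x_4)$, exactly as the statement predicts.

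The problem is the doubly covered case, where you have swapped $\cp$ and $\vu$. By \cref{doublenon-com:def}, a doubly covered cup lies to the \emph{left} of the decorated cup covering it, so here $r_\cp<l_\vu$. The correct configuration is $\cp=(x_1,x_2)$ and $\vu=(x_3,x_4)$ decorated, with $\ot=(x_1,x_4)$ the outer cup and $\gr=(x_2,x_3)$ the inner cup of $\underline{\la}$. This is consistent with $b(\cp)=1$, $b(\vu)=3$ in the proof of \cref{Kadjproof}. In your labelling the outer cup would share $x_4=r_\cp$ with $\cp$, so the condition $l_\ot=l_\cp$ in the statement never occurs and all $\alpha_{x_j}$ vanish. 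The check then genuinely fails. For instance, with $\ot$ decorated you would compute
\[
C(\cp,\vu)-C(\gr,\ot)=(3x_1+3x_2-x_3-x_4)-(3x_1+3x_4-x_2-x_3)=4x_2-4x_4,
\]
whereas the right-hand side of \eqref{sgn222} is $0$.

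With the correct labelling there are two sub-cases:
\begin{itemize}
\item $\cp$ undecorated, $\ot$ decorated:
\[
(3x_3+3x_4-x_1-x_2)-(3x_1+3x_4-x_2-x_3)=4x_3-4x_1.
\]
\item $\cp$ decorated, $\ot$ undecorated:
\[
(3x_3+3x_4-x_1-x_2)-(3x_4-x_1+2-x_2-x_3)=4x_3-2.
\]
\end{itemize}
Since $\vu$ is always decorated here, these match $\alpha_{x_3}=1$ together with either $\alpha_{x_1}=-1$ (decorations agree) or $\alpha_{x_1}=0$ plus the correction term $-2$ (decorations differ).

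This also corrects your predicted $x_1$-coefficient in the case $l_\ot=l_\cp$. It is $-4$ or $0$, not $\pm 4$. The value $\alpha_{x_1}=+1$ only arises in the covered case with $l_\ot=r_\cp$ and $\vu$ decorated. With the doubly covered configuration fixed, your table-based check covers every case of \cref{adj lem2} and goes through.
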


\begin{proof}
This follows by checking all the possible adjacent pairs from \cref{adj lem2} for which $\langle \cp\cup \ot\rangle_\mu\in\underline{\mu}$ is well-defined and calculating the respective coefficients. We emphasise that $r_\cp$ and $r_\ot$ are never the shared common vertex between the cups $\cp$ and $\ot$. Furthermore, the coefficient of each $x_i$ term is a multiple of 4 because the values in \eqref{sgn21} are either $3$ or $-1$.
\end{proof}

Next, we consider how \eqref{sgn222} behaves under contraction.

\begin{lem}
\label{signsforthefinaltime}

Let $\mu, \la \in \mptn$ be such that $\la=\mu-\cp$ for $\cp\in\underline{\mu}$. Let $\gr,\ot \in \underline{\la}$ be adjacent to $\cp$.
Additionally suppose that $\langle \cp\cup \ot\rangle_\mu$ exists (and hence $\underline{\mu}\mu$ and $\underline{\la}\la$ are as pictured in \cref{adj lem2}).
Let $x_1< x_2< x_3< x_4\in\mathbb{Z}_{\geq0}+\frac{1}{2}$ be the x-coordinates of the vertices of $\cp,\langle \cp\cup \ot\rangle_\mu\in\underline{\mu}$ (and hence of $\gr,\ot\in\underline{\la}$). 

Additionally, suppose that $\la=\mu-\cp$ is contractible at $k\in\mathbb{Z}_{\geq0}$. We then have that $\Phi_k(\la)=\Phi_k(\mu)-\cp'$, with $\cp' \in\underline{\Phi_k(\mu)}$ adjacent to $\gr',\ot'\in\underline{\Phi_k(\la)}$. We also have that $\langle \cp'\cup \ot'\rangle_{\Phi_k(\mu)}$ exists.
Let $\Phi_k(x_1)<\Phi_k(x_2)<\Phi_k(x_3)<\Phi_k(x_4)\in\frac{1}{2}+\mathbb{Z}_{\geq0}$ be the x-coordinates of the vertices of $\cp',\langle \cp'\cup \ot'\rangle_{\Phi_k(\mu)}\in\underline{\Phi_k(\mu)}$ (and hence of $\gr',\ot'\in\underline{\Phi_k(\la)}$ also). 

$(i)$
  If $\Phi_k(x_1),\Phi_k(x_2),\Phi_k(x_3),\Phi_k(x_4)\neq\frac{1}{2}$ or $k=0$,
then we have that:
\begin{equation}
\label{hello2}
\begin{aligned}
\alpha_{x_1}&=\alpha_{\Phi_k({x_1})}\\
\alpha_{x_2}&=\alpha_{\Phi_k({x_2})}\\
\alpha_{x_3}&=\alpha_{\Phi_k({x_3})}\\
\alpha_{x_4}&=\alpha_{\Phi_k({x_4})}\\
2(\delta_{\text{dec},\ot}- \delta_{\text{dec},\langle \cp\cup \ot\rangle_\mu})&=2(\delta_{\text{dec},\ot'}-\delta_{\text{dec},\langle \cp'\cup \ot'\rangle_{\Phi_k(\mu)}})
\end{aligned}
\end{equation}

$(ii)$ Consequently, we then obtain that $$ C(\cp,\langle \cp\cup \ot\rangle_\mu)-C(\gr,\ot)\equiv C(\cp',\langle \cp'\cup \ot'\rangle_{\Phi_k(\mu)})-C(\gr',\ot')\pmod 8.$$
\end{lem}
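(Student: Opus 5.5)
The plan is to reduce everything to the explicit description of the coefficients in \cref{signsanothergo222}. First I will check that the configuration survives contraction. Since $\la=\mu-\cp$ is contractible at $k$, neither $\cp$ nor $\langle\cp\cup\ot\rangle_\mu$ is the contracted cup with vertices $(k-\frac12,k+\frac12)$ (respectively the decorated cup at $(\frac12,\frac32)$ when $k=0$). By \cref{breadth}, $\Phi_k(\cp)=\cp'$ and $\Phi_k(\langle\cp\cup\ot\rangle_\mu)$ are again cups of $\underline{\Phi_k(\mu)}$.

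Contraction deletes a small circle that plays no role in the local picture of \cref{adj lem2}, in the sense of \cref{general}. So the four-vertex configuration of $\cp,\langle\cp\cup\ot\rangle_\mu$ in $\underline{\mu}$ and of $\gr,\ot$ in $\underline{\la}$ is carried to a configuration of the same type, from the list in \cref{adj lem2}. It follows that $\cp'$ is adjacent to $\gr'=\Phi_k(\gr)$ and $\ot'=\Phi_k(\ot)$, and that $\langle\cp'\cup\ot'\rangle_{\Phi_k(\mu)}=\Phi_k(\langle\cp\cup\ot\rangle_\mu)$ exists. The positions also correspond: $\Phi_k(x_i)$ is $x_i$ or $x_i-2$ according as $x_i<k$ or $x_i>k$.

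For $(i)$, I will use the fact that each $\alpha_{x_j}$ in \eqref{hello111111}--\eqref{hello5555} depends only on two things:
\begin{itemize}
\item which shared-vertex relation holds ($l_\ot=l_\cp$, $l_\ot=r_\cp$ or $r_\ot=l_\cp$);
\item whether $\ot$ and $\langle\cp\cup\ot\rangle_\mu$ carry the same decoration.
\end{itemize}
Because $\Phi_k$ preserves the relative order of surviving vertices, the first datum is unchanged. By the last sentence of \cref{breadth}, decorations are preserved unless a cup is moved to have left endpoint $\frac12$ with $k\neq 0$. The hypothesis that no $\Phi_k(x_j)$ equals $\frac12$, or that $k=0$, excludes exactly this case. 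Hence $\delta_{\text{dec},\ot}=\delta_{\text{dec},\ot'}$ and $\delta_{\text{dec},\langle\cp\cup\ot\rangle_\mu}=\delta_{\text{dec},\langle\cp'\cup\ot'\rangle}$. This gives all five lines of \eqref{hello2}.

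For $(ii)$, I apply \eqref{sgn222} on both sides and subtract. The decoration terms cancel by $(i)$. What remains is $4\sum_j\alpha_{x_j}(x_j-\Phi_k(x_j))=8\sum_{x_j>k}\alpha_{x_j}$, which is divisible by $8$. The main obstacle I expect is the exceptional situation in which some $\Phi_k(x_j)=\frac12$ with $k\neq0$, so that decorations flip and $(i)$ fails. Part $(ii)$ as stated sits under the hypothesis of $(i)$, so I will treat it only in that case.

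If $(ii)$ is nonetheless needed in the exceptional case, I would handle it directly. There only $x_1$ moves to $\frac12$, so only $\alpha_{x_1}$ and the $\delta$-terms can change, and the change in $\delta_{\text{dec},\ot}-\delta_{\text{dec},\langle\cp\cup\ot\rangle_\mu}$ is either $0$ or $\pm 2$. I would then check by hand that the resulting change $2\Delta\delta+4\cdot\frac12\cdot\Delta\alpha_{x_1}$ is divisible by $8$.
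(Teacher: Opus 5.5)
Your argument is correct and is essentially the paper's proof: $\Phi_k$ preserves the relative order of the surviving vertices, so the shared-vertex relation is unchanged. Under the stated hypothesis, \cref{breadth} preserves the decorations, so every $\alpha_{x_j}$ and the $\delta$-term in \eqref{sgn222} are unchanged, and the coordinate shifts by $0$ or $2$ then leave a difference of $8\sum_{x_j>k}\alpha_{x_j}\equiv 0 \pmod 8$. Your closing sketch of the exceptional case is not needed here, since the paper treats it separately in \cref{signsanothergo33}. Note, however, that when $l_\ot=r_\cp$ only $\langle\cp\cup\ot\rangle_\mu$ can change decoration, so the $\delta$-difference can change by $\pm1$ (not just $0$ or $\pm2$); it is the combination $2\alpha_{x_1}+2(\delta_{\text{dec},\ot}-\delta_{\text{dec},\langle\cp\cup\ot\rangle_\mu})$ that is invariant.
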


\begin{proof}
First, we note that the common vertex between the cups $\cp$ and $\ot$ is unchanged under the contraction mapping (the cups $\cp'$ and $\ot'$ have the same vertex in common).
By \cref{breadth}, we have that unless $l_{\ps'}=\frac{1}{2}$ or $k=0$,  $\delta_{\text{dec},\ps}=\delta_{\text{dec},\ps'}$. If $\delta_{\text{dec},\ps}=\delta_{\text{dec},\ps'}$, the specific case of adjacency listed in \cref{adj lem2} is preserved, meaning that the terms in \eqref{hello111111}-\eqref{hello5555} are unchanged under the contraction map; claim $(i)$ then follows.

Next, suppose $x_i=i-\frac{1}{2}$ in $\mu$. If $i<k$ then $\Phi_k(x_i)=x_i-\frac{1}{2}$ and if $i>k$ then $\Phi_k(x_i)=x_i-2-\frac{1}{2}$. Substituting these $x$-coordinates into \eqref{sgn222}, $(ii)$ immediately follows by using \eqref{hello2}

    \end{proof}
    
We now deal with what we have intentionally excluded from \cref{signsforthefinaltime}; namely, if one of the vertices lies at $x=\frac{1}{2}$ and $k\neq0$. For $k\neq0$, the decoration ($\delta$) of the cup $\mq\in\underline{\mu}$ differs from that of $\mq'\in\underline{\Phi_k(\mu)}$ if and only if $l_{\mq'}=\frac{1}{2}$ meaning our argument above will not hold. We now make the necessary adjustments to deal with this exceptional case.
    
    \begin{lem}
\label{signsanothergo33}
Let $\mu, \la \in \mptn$ be such that $\la=\mu-\cp$ for $\cp\in\underline{\mu}$. Let $\gr,\ot \in \underline{\la}$ be adjacent to $\cp$.
Additionally suppose that $\langle \cp\cup \ot\rangle_\mu$ exists (and hence $\underline{\mu}\mu$ and $\underline{\la}\la$ are as pictured in \cref{adj lem2}).
Let $x_1,x_2,x_3,x_4\in\mathbb{Z}_{\geq0}+\frac{1}{2}$ be the x-coordinates of the vertices of $\cp,\langle \cp\cup \ot\rangle_\mu\in\underline{\mu}$ (and hence of $\gr,\ot\in\underline{\la}$ too).

Additionally, suppose that $\la=\mu-\cp$ is contractible at $k\in\mathbb{Z}_{\geq0}$. 
As above, let $\Phi_k(x_1)<\Phi_k(x_2)<\Phi_k(x_3)<\Phi_k(x_4)\in\frac{1}{2}+\mathbb{Z}_{\geq0}$ be the x-coordinates of the vertices of $\cp',\langle \cp'\cup \ot'\rangle_{\Phi_k(\mu)}\in\underline{\Phi_k(\mu)}$
(and hence of $\gr',\ot'\in\underline{\Phi_k(\la)}$ also). 

Suppose that $\Phi_k(x_1)=\frac{1}{2}$ and $k\neq0$.

$(i)$ Then we have that:
\begin{equation}
\begin{aligned}
\alpha_{x_2}&=\alpha_{\Phi_k({x_2})}\\
\alpha_{x_3}&=\alpha_{\Phi_k({x_3})}\\
\alpha_{x_4}&=\alpha_{\Phi_k({x_4})}\\
\end{aligned}
\end{equation}
$(ii)$ Additionally, we have that $\Phi_k(x_1)=\frac{1}{2}$ and
\begin{equation}
\label{alpha1} 2\alpha_{x_1}+2(\delta_{\text{dec},\ot}- \delta_{\text{dec},\langle \cp\cup \ot\rangle_\mu})=2\alpha_{\Phi_k({x_1})}+2(\delta_{\text{dec},\ot'}-\delta_{\text{dec},\langle \cp'\cup \ot'\rangle_{\Phi_k(\mu)}}).\end{equation}
$(iii)$ Consequently, we then obtain  that $$ C(\cp,\langle \cp\cup \ot\rangle_\mu)-C(\gr,\ot)\equiv C(\cp',\langle \cp'\cup \ot'\rangle_{\Phi_k(\mu)})-C(\gr',\ot')\pmod 8.$$
\end{lem}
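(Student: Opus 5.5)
We mirror the proof of \cref{signsforthefinaltime}, isolating the single vertex at $x=\frac{1}{2}$ where the decorations of cups may change under $\Phi_k$.

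For $(i)$, note that by \cref{breadth} the only cups whose decoration can change under $\Phi_k$ with $k\neq 0$ are those whose image has left vertex at $\frac{1}{2}$. Since $\Phi_k(x_1)=\frac12$ is the leftmost of the four vertices, at most the cups with left endpoint $x_1$ change decoration. The coefficients $\alpha_{x_2},\alpha_{x_3},\alpha_{x_4}$ in \eqref{hellox222222}--\eqref{hello5555} depend only on which vertices $\cp$ and $\ot$ share, i.e.\ on whether $l_\ot=l_\cp$ or $r_\ot=l_\cp$. As in the proof of \cref{signsforthefinaltime}, the common vertex of $\cp$ and $\ot$ is preserved by contraction, so these three coefficients are unchanged.

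For $(ii)$, we split into the two adjacency types. If $r_\ot=l_\cp$, then $x_1=l_\ot=l_{\langle\cp\cup\ot\rangle_\mu}$, so $\ot$ and $\langle \cp\cup\ot\rangle_\mu$ both have left vertex $x_1$. Both therefore flip decoration under $\Phi_k$, so their difference $\delta_{\text{dec},\ot}-\delta_{\text{dec},\langle\cp\cup\ot\rangle_\mu}$ is preserved. In this case $\alpha_{x_1}=0$ on both sides, since neither condition in \eqref{hello111111} applies, and \eqref{alpha1} holds trivially. If instead $l_\ot=l_\cp=x_1$, then $\ot$ and $\cp$ start at $x_1$ while $\langle\cp\cup\ot\rangle_\mu$ does not. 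Hence exactly one of $\delta_{\text{dec},\ot}$ and $\delta_{\text{dec},\langle\cp\cup\ot\rangle_\mu}$ toggles, so the difference $\delta_{\text{dec},\ot}-\delta_{\text{dec},\langle\cp\cup\ot\rangle_\mu}$ changes by $\pm1$ and the equality-of-decorations condition in \eqref{hello111111} flips. Thus $\alpha_{x_1}$ moves between $-1$ and $0$. We then check case by case (equal versus unequal decorations before contraction) that the change $2\Delta\alpha_{x_1}$ exactly cancels the change in $2\Delta\delta$. This is the main obstacle: one must confirm the signs line up, i.e.\ that when $\alpha$ goes from $-1$ to $0$ the difference $\delta_{\text{dec},\ot}-\delta_{\text{dec},\langle\cp\cup\ot\rangle_\mu}$ goes down by one, and vice versa. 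This is read off from the pictures in \cref{adj lem2}, where each case fixes which of $\ot,\langle\cp\cup\ot\rangle_\mu$ is decorated. If a residual discrepancy of $\pm4$ remains, we will absorb it into the modulus $8$ together with the $4x_1\alpha_{x_1}$ term, since $x_1$ changes position as well.

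For $(iii)$, we substitute into \eqref{sgn222}. The terms $4x_i\alpha_{x_i}$ for $i=2,3,4$ change only by $4\cdot(\text{shift of }x_i)\cdot\alpha_{x_i}$, with shifts $0$ or $2$, which is $\equiv 0 \pmod 8$. The remaining contribution is $4x_1\alpha_{x_1}+2(\delta_{\text{dec},\ot}-\delta_{\text{dec},\langle\cp\cup\ot\rangle_\mu})$. Its change is computed from $(ii)$ together with the fact that $x_1$ maps to $\frac12$ (with $x_1=\frac12$ or $\frac52$ originally). We check the congruence mod $8$ for these few possibilities directly, which completes the proof.
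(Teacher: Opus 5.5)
Your overall route is the paper's: (i) from the preservation of the vertex shared by $\cp$ and $\ot$, (ii) by a case split on how $\ot$ meets $\cp$, and (iii) by reducing to $(4x_1-2)\alpha_{x_1}\equiv 0 \pmod 8$ with $x_1\in\{\frac12,\frac52\}$. The gap is in (ii): your case split is incomplete. The shared vertex can be $r_\ot=l_\cp$, $l_\ot=l_\cp$ or $l_\ot=r_\cp$, and you treat only the first two. (In (i) you also describe the adjacency type only as ``whether $l_\ot=l_\cp$ or $r_\ot=l_\cp$''.) The third case does occur: there $\ot$ is the right-hand adjacent cup in the covered configurations a), b) of \cref{adj lem2}, and it is exactly the case governed by the first line of \eqref{hello111111}.

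This case behaves like neither of yours. Write $\vu=\langle\cp\cup\ot\rangle_\mu$. Then $x_1=l_\vu$ is the left end of $\gr$, not of $\ot$. So under $\Phi_k$ it is $\vu$ whose decoration toggles, while that of $\ot$ does not, and $\alpha_{x_1}$ moves between $0$ and $+1$ rather than between $-1$ and $0$. The case is easy to close, as the paper does: $\ot$ is undecorated here, so $\alpha_{x_1}=\delta_{\text{dec},\vu}$ and both sides of \eqref{alpha1} vanish. As written, however, neither your (ii) nor your (iii) covers it.

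There are also two smaller points.

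\emph{The case $r_\ot=l_\cp$.} The step ``both flip decoration, so their difference is preserved'' is not a valid inference, since toggling both decorations negates $\delta_{\text{dec},\ot}-\delta_{\text{dec},\vu}$. The conclusion survives only because in a), b) the left adjacent cup carries the same decoration as $\vu$. So this difference is $0$ before and after contraction, and you need to say so.

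\emph{The case $l_\ot=l_\cp$.} The verification you defer does go through.
If $\cp$ is undecorated, then $\delta_{\text{dec},\ot}=\delta_{\text{dec},\vu}=1$ and $\alpha_{x_1}=-1$.
If $\cp$ is decorated, then $\delta_{\text{dec},\ot}=0$, $\delta_{\text{dec},\vu}=1$ and $\alpha_{x_1}=0$.
So the left side of \eqref{alpha1} equals $-2$ in both configurations. Contraction toggles both $\cp$ and $\ot$, since both start at $x_1$, so it merely swaps the two configurations. You should drop the fallback of absorbing a residual $\pm4$ into the modulus. Part (ii) is an exact equality, so no such absorption is available there, and it is not needed.
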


\begin{proof}
This proof will naturally follow in the same manner as that of  \cref{signsforthefinaltime}.
First, we note again that the common vertex between the cups $\cp$ and $\ot$ is unchanged under the contraction mapping. The terms in \eqref{hellox222222}-\eqref{hello5555} are independent of decorations and so $(i)$ follows.

To establish $(ii)$, we consider each of the three possible common nodes of the cups $\ot$ and $\cp$ individually and compute the possible values that $\alpha_{x_1}$ and $\alpha_{\Phi_k({x_1}}$ could take in \eqref{sgn222}. Again, we emphasise that even if the decorations change, the common vertex between the cups $\cp$ and $\ot$ is unchanged under the contraction mapping.

 First, if $r_\ot=l_\cp$, then $\alpha_{x_1}=\alpha_{\Phi_k({x_1})}=0$. Here, $\ot$ corresponds to the leftmost cup in $\underline{\la}$ from \cref{adj lem2} $a)$ and $b)$. Since $l_{\ot'}=l_{\langle \cp'\cup \ot'\rangle_{\Phi_k(\mu)}}=\frac{1}{2}$, we obtain $$\delta_{\text{dec},\langle \cp\cup \ot\rangle_\mu}=\delta_{\text{dec},\ot} \quad \text{and} \quad \delta_{\text{dec},\langle \cp'\cup \ot'\rangle_{\Phi_k(\mu)}}=\delta_{\text{dec},\ot'}.$$
 Therefore, \eqref{alpha1} holds as required.

 Second, if $l_\ot=r_\cp$, then $\ot$ corresponds to the rightmost cup in $\underline{\la}$ from \cref{adj lem2} $a)$ and $b)$, meaning $\ot$ is undecorated. 
 Using \eqref{hello111111}, we have that $\alpha_{x_1}=1$ if and only if $\delta_{\text{dec},\ot}\neq \delta_{\text{dec},\langle \cp\cup \ot\rangle_\mu}$ and $\alpha_{x_1}=0$ otherwise. 
It follows that 
 $$2\alpha_{x_1}+2(\delta_{\text{dec},\ot}-\delta_{\text{dec},\langle \cp\cup \ot\rangle_\mu})=2\alpha_{\Phi_k({x_1})}+2(\delta_{\text{dec},\ot'}-\delta_{\text{dec},\langle \cp'\cup \ot'\rangle_{\Phi_k(\mu)}})=0,$$
 and  \eqref{alpha1} holds as required.

 Lastly, if $l_\ot=l_\cp$, then $\ot$ is as pictured in \cref{adj lem2} $c)$ and $d)$. It is easy to check that $$2\alpha_{x_1}+2(\delta_{\text{dec},\ot}-\delta_{\text{dec},\langle \cp\cup \ot\rangle_\mu})=2\alpha_{\Phi_k({x_1})}+2(\delta_{\text{dec},\ot'}-\delta_{\text{dec},\langle \cp'\cup \ot'\rangle_{\Phi_k(\mu)}})=-2.$$ Therefore \eqref{alpha1} holds as required, which completes the proof of $(ii)$.
 
 Finally, we prove $(iii)$. If $\Phi_k(x_1)=\frac{1}{2}$, then either $k=1$ and $x_1=\frac{5}{2}$ or $k>1$ and $x_1=\Phi_k(x_1)=\frac{1}{2}$.
In either case, we applying  $(ii)$ yields that
\begin{equation}
\label{Isthisover}
 4x_1\alpha_{x_1}+2(\delta_{\text{dec},\ot}- \delta_{\text{dec},\langle \cp\cup \ot\rangle_\mu})\equiv4\Phi_k({x_1})\alpha_{\Phi_k({x_1})}+2(\delta_{\text{dec},\ot'}-\delta_{\text{dec},\langle \cp'\cup \ot'\rangle_{\Phi_k(\mu)}})\pmod 8.\end{equation}
 
 As in the conclusion of the proof of \cref{signsforthefinaltime}, let $x_i=i-\frac{1}{2}$ in $\mu$ for $i>1$. If $i<k$ then $\Phi_k(x_i)=x_i-\frac{1}{2}$ and if $i>k$ then $\Phi_k(x_i)=x_i-2-\frac{1}{2}$. Substituting these $x$-coordinates into \eqref{sgn222}, $(iii)$ follows immediately from parts $(i)$ and $(ii)$ together with \eqref{Isthisover}.
    \end{proof}

We have now finished our preparatory lemmas and are ready for the main proof of the final relation.

\begin{prop}[The adjacency relations] 
\label{Kadjproof}
Let $\la=\mu-\cp$ and suppose that $\cp\in \underline{\mu}$ and $\ot\in \underline{\la}$ are adjacent. Set $\sigma= \mu-\cp-\ot=\la-\ot, \alpha =\mu-\langle \cp\cup \ot\rangle_\mu$. Then we have
\begin{equation}
\label{K adj}
i^{\ka(\sigma,\la)+\ka(\la,\mu)}(\underline{\sigma}\sigma\overline{\la})(\underline{\la}\la\overline{\mu})=
\left\{ \begin{array}{ll} i^{\ka(\alpha,\sigma)+\ka(\alpha,\mu)+2(b(\langle \cp\cup \ot\rangle_\mu) - b(\ot))}
(\underline{\sigma}\alpha\overline{\alpha})(\underline{\alpha}\alpha\overline{\mu}) &
 \mbox{if $\langle \cp\cup \ot\rangle_\mu$ exists}
   \\ 0 
& \mbox{otherwise.} \end{array} \right.
\end{equation}
\end{prop}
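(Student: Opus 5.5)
We will follow the pattern of the earlier relations: first reduce the sign bookkeeping to a statement invariant under contraction, then reduce to incontractible diagrams, and finally verify the remaining small cases by explicit surgery.

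\textbf{Reducing the signs.} Rewrite the exponent of $i$ on the right of \eqref{K adj} minus that on the left. By the definitions, $\ka(\sigma,\la)$ is the midpoint of $\ot$, $\ka(\la,\mu)$ that of $\cp$, $\ka(\alpha,\mu)$ that of $\langle \cp\cup\ot\rangle_\mu$, and $\ka(\alpha,\sigma)$ that of the cup flipped to pass from $\alpha$ to $\sigma$, which is $\gr$ (the other cup adjacent to $\cp$, cf.\ \cref{adj lem2}). The difference of exponents is then, up to a factor of $\tfrac12$, exactly $C(\cp,\langle\cp\cup\ot\rangle_\mu)-C(\gr,\ot)$ from \cref{signsanothergo}.

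By \cref{signsforthefinaltime} and \cref{signsanothergo33}, this quantity is preserved modulo $8$ under every contraction $\Phi_k$. Hence the relative power of $i$ is preserved modulo $4$. We also need the powers of $i$ introduced by \cref{dilation} to match trivially under $\tilde\Phi_k$ (\cref{KContraction homomorphism}, which involves no scalars). For this we check that the constants $\ka$ shift in a way that absorbs the factors $\pm i$ and $-1$ appearing in $\Phi_k$ on the Hecke side; this is the same mechanism already used implicitly in the commuting and non-commuting cases.

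With these two ingredients, induction on $n$ reduces \eqref{K adj} to incontractible diagrams. The case where $\langle\cp\cup\ot\rangle_\mu$ does not exist reduces the same way, since $\tilde\Phi_k$ is injective.

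\textbf{Incontractible cases.} By \cref{Incontractible cup diagrams}, $b(\cp)=1$ and $\mu$ is one of the shapes $(1,2,\dots,c^k)$, $(1,2,\dots,c)$, $(1)$, $(1,2)$, $(1^2)$ or $(1^c)$. We go through the lists \cref{adj lem0,adj lem1,adj lem2}.

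\begin{itemize}
\item \emph{Non-existent $\langle\cp\cup\ot\rangle_\mu$.} This covers \cref{adj lem1} and the concentric configuration $\gr\prec\ot$ of \cref{adj lem2} c), d). Here the product $(\underline{\sigma}\sigma\overline{\la})(\underline{\la}\la\overline{\mu})$ should vanish. After stripping the small anti-clockwise circles with \cref{idempot lem}, the remaining surgery is either a reconnect of two lines, which gives $0$ by \eqref{reconeq}, or a merge producing a line forced clockwise, which also gives $0$.
\item \emph{Existing $\langle\cp\cup\ot\rangle_\mu$.} This is \cref{adj lem2} a), b), c), d) when $\ot$ is the outer cup. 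We compute both sides as a single merge or split after removing small circles, and compare.
\end{itemize}

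In each case we track the sign contributions ${\sf sign}_D$ and the $(-1)$ from decorated splits (with the corrected convention of \cref{get out of jail remark}). We check that the resulting sign equals $i^{\frac12(C(\cp,\langle\cp\cup\ot\rangle_\mu)-C(\gr,\ot))}$, evaluated via \eqref{sgn222} at the explicit coordinates $x_1,\dots,x_4$ of the incontractible diagram. As before, the parity variants of $c$ and $k$, and extra rays on the left, only change decorations and do not change the surgeries (\cref{exercise}).

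\textbf{Main obstacle.} The hardest step is the sign matching in the incontractible computations. The $i$-powers must agree exactly, not merely up to sign, so the convention for the tag, the decorated-split sign and the $\delta_{\text{dec}}$ corrections in \eqref{sgn222} all have to align. A further delicate point is the boundary situation $x_1=\tfrac12$ with $k\neq0$, where decorations change under contraction. This is handled by \cref{signsanothergo33}, but it needs to be checked against the $-1$ appearing in \cref{dilation} for decorated cups with $k=0,1$.
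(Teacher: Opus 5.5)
Your overall structure matches the paper's proof. The exponent difference in \eqref{K adj} is indeed $\tfrac12\bigl(C(\cp,\langle\cp\cup\ot\rangle_\mu)-C(\gr,\ot)\bigr)$, with $\ka(\alpha,\sigma)$ the midpoint of the other adjacent cup $\gr$. \cref{signsforthefinaltime,signsanothergo33} make this difference contraction-invariant mod $4$, and \cref{KContraction homomorphism} then reduces everything to incontractible diagrams, split into covered, doubly covered and single-adjacent-cup cases.

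\textbf{Drop the second ingredient.} Your ``second ingredient'' is unnecessary and, as stated, not true. The identity \eqref{K adj} lives inside $\mathbb{D}_n$, and $\tilde\Phi_k$ is an injective algebra map with no scalars. So applying it turns \eqref{K adj} into the corresponding identity in $\mathbb{D}_{n-2}$ as soon as the exponent difference is unchanged mod $4$. The Hecke-side map of \cref{dilation} never enters; the paper does not use it here. Nor are its scalars absorbed by the shifts of $\ka$ generator by generator: for an undecorated $\cp$ with $k<l_\cp$, $\ka$ drops by $2$, multiplying $i^{\ka}$ by $-1$, while $\Phi_k$ carries no scalar. Your concern about the $-1$ for decorated cups with $k=0,1$ is therefore moot.

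Moreover, in every incontractible case the exponent difference is $\equiv 0\pmod 4$. For example, in the covered case with $c$ odd and $k=2$ the exponents are $5$ versus $3+2$. So what remains there is literal equality of two surgery products, not delicate $i$-power matching.

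\textbf{The vanishing argument is wrong.} When $\langle\cp\cup\ot\rangle_\mu$ does not exist, the dichotomy ``reconnect of two lines, or merge into a line forced clockwise'' does not describe what happens.

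For the inner cup $\gr$ of the concentric configurations \cref{adj lem2} c), d), no lines are involved. After discarding small circles, the $\gr$-surgery merges two circles. The outer surgery then splits the result into circles through $\{x_1,x_2\}$ and $\{x_3,x_4\}$. For either parity of $c$, exactly one of the cup in $\underline{\sigma}$ and the cap in $\overline{\mu}$ joining each pair is decorated, so each circle carries one decoration.

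Similarly, in \cref{adj lem1} with a decorated ray to the right of $\cp$ (already for $\mu=\vee\wedge\wedge$), the stacked diagram is a single propagating line. It splits into a circle and a line, each with one decoration.

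The uniform reason, which is the paper's, is that $\underline{\sigma}\overline{\mu}$ admits no orientation. Hence there are no basis vectors $\underline{\sigma}\nu\overline{\mu}$ for the product to land on, and it vanishes.

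\textbf{Check both adjacent cups in a), b).} In \cref{adj lem2} a), b) the two adjacent cups sit side by side, and $\langle\cp\cup\gr\rangle_\mu$ exists as well. So both must be checked, as the paper does for $\ot$ and $\gr$ separately; ``$\ot$ the outer cup'' only makes sense in c), d).
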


\begin{proof}
First suppose $\underline{\mu}\la$
 is  contractible at some $k\in \mathbb Z_{\geq0}$. Applying \cref{KContraction homomorphism} and \cref{signsforthefinaltime,signsanothergo33} inductively to \eqref{K adj}, it suffices to restrict our focus only to the incontractible cases.

We first consider when $\cp$ is covered. 
Then $\cp$ is adjacent to two commuting cups, which we labelled $\gr, \ot$ (from left to right). 
Under the assumption that $ \underline{\mu}\la$ is incontractible, we need only consider the case that 
$b(\cp)=1$ and $b(\langle \cp\cup \ot\rangle_\mu)=2$. Note also that $b(\ot)=b(\gr)=1$.
Explicitly this is $\mu=(1,2,\dots,c^k)$, $\la=(1,2,\dots,c^{k-1},c-1)$ and $\alpha=(1,2,\dots,(c-1)^{k-1},c-2)$.
If $c$ is odd and $k=2$, by performing \cref{idempot lem} $\tfrac{c+1}{2}$ times, we get that $(\underline{\sigma}\sigma\overline{\la})(\underline{\la}\la\overline{\mu})$ is equal to
\begin{equation}
\label{adj pic 1}
\begin{minipage}{4.5cm}
 
\end{minipage}\end{equation} 
and 
$(\underline{\sigma}\alpha\overline{\alpha})(\underline{\alpha}\alpha\overline{\mu})$ is equal to
\begin{equation}
\label{adj pic 2}
\begin{minipage}{4.5cm}%
 
\end{minipage}.\end{equation}
Combined with calculating that $\ka(\sigma,\la)+\ka(\la,\mu)=5$ and $\ka(\alpha,\sigma)+\ka(\alpha,\mu)=3$, it follows that \eqref{K adj} holds for $\ot$ adjacent to $\cp$.

We also need to check factoring through $\nu=\la-\gr$. Applying \cref{idempot lem} $\tfrac{c+1}{2}$ times, we have that $(\underline{\nu}\nu\overline{\la})(\underline{\la}\la\overline{\mu})$ is equal to
\begin{equation}
\label{adj pic 22}
\begin{minipage}{4.5cm}%
 
\end{minipage}\end{equation} 
and 
$(\underline{\nu}\alpha\overline{\alpha})(\underline{\alpha}\alpha\overline{\mu})$ is equal to
\begin{equation}
\label{adj pic 3}
\begin{minipage}{4.7cm}%
 
\end{minipage}.\end{equation} 
In this case $\ka(\sigma,\la)+\ka(\la,\mu)=3$ and $\ka(\alpha,\sigma)+\ka(\alpha,\mu)=5$
and so the relation holds independent of the choice of $\ot$ or $\gr$ adjacent to $\cp$.

Notice that decorating the strands doesn't change any of the merging calculations performed above, and as such, $\cp$ being undecorated corresponds to almost identical calculations. 
If $k>2$, $\langle \cp\cup \ot\rangle_\mu$ and $\gr$ will no longer be decorated; $\langle \cp\cup \ot\rangle_\mu$ will also be covered by $k-3$ small anti-clockwise oriented undecorated circles and one decorated small anti-clockwise oriented circle, but due to \cref{idempot lem}, these can be essentially ignored. The powers of $i$ follow similarly.
If $k>c$, there will be strands on the left-hand side of the calculation that can similarly be ignored.
Finally, if $c$ is even, there will be no decorations present, and the calculation is essentially identical to the adjacency relation in \cite[Section 9]{ChrisDyckPaper}. 
Now we treat the case where $\cp$ is doubly covered.
Then $\cp$ is adjacent to two non-commuting cups $\gr \prec \ot$.
Under the assumption that $ \underline{\mu}\la$ is incontractible, we only need to consider the case when $b(\cp)=1$ and $b(\langle \cp\cup \ot\rangle_\mu)=3$. Explicitly, this is $\mu=(1,2,\dots,c)$, $\la=(1,2,\dots,(c-1)^2)$ and $\alpha=(1,2,\dots,c-2,(c-3)^2)$. If $c$ is even, by merging the $\frac{c}{2}$ disjoint circles, using \cref{idempot lem}, we obtain $(\underline{\sigma}\sigma\overline{\la})(\underline{\la}\la\overline{\mu})$ is equal to
\begin{equation}
\label{adj pic 5}
\begin{minipage}{4.5cm}
 
\end{minipage}\end{equation} 
and 
$(\underline{\sigma}\alpha\overline{\alpha})(\underline{\alpha}\alpha\overline{\mu})$ is equal to
\begin{equation}
\label{adj pic 6}
\begin{minipage}{4.5cm}%
 
\end{minipage}.\end{equation}

Once again, this combined with $\ka(\sigma,\la)+\ka(\la,\mu)=3$, $\ka(\alpha,\sigma)+\ka(\alpha,\mu)=5$ and $b(\ot)=2$ means that \eqref{K adj} holds.

However if we take the product with $\nu=\la-\gr$ then $(\underline{\nu}\nu\overline{\la})(\underline{\la}\la\overline{\mu})$ is equal to
\begin{equation}
\label{adj pic 7}
\begin{minipage}{4.5cm}%
 
\end{minipage}=0\end{equation} 
as the resulting diagram is unorientable and so the \eqref{K adj} holds, since $\langle \cp\cup \gr\rangle_\mu$ does not exist. If $c$ is odd, $\cp$ is decorated and $\ot$ is not, but an essentially identical calculation also shows that the relation holds.

Finally, if $\cp$ is adjacent to only one cup, then $\langle \cp\cup \ot\rangle_\mu$ does not exist, and one can easily verify that the surgery procedure always results in an unorientable line as pictured in \cref{adj pic 8} (note that in \cref{adj pic 8}, $\underline{\mu}\alpha$ is not incontractible):
\begin{equation}
\label{adj pic 8}
\begin{minipage}{4.2cm}
 
\end{minipage}=0\end{equation}
\end{proof}
\color{black}

We have now checked that all the relations, \eqref{rel1}--\eqref{adjacentcup}, from \cref{presentation} hold in the image of $\Psi$ and hence we can conclude our proof:

\begin{proof}[Proof of \cref{THE isomorphism}]
By construction, the map $\Psi$ sends the generators of $\mathcal{H}_{(D_n,A_{n-1})}$ to the generators of $\mathbb{D}_n$. Together, \cref{tin pot,KcomR,KCR,KDC,selfdual1,selfdual2,Kadjproof} establish that the Ext-quiver and relations presentation of $\mathcal{H}_{(D_n,A_{n-1})}$ is also satisfied in $\mathbb{D}_n$. Consequently, $\Psi$ is a well-defined and surjective $\mathbb{Z}$-graded homomorphism. Both algebras possess the same graded dimension, as their bases are indexed by the same set of triples $(\la,\nu,\mu)\in\mptn^3$ such that $\underline{\la}\nu$ and $\underline{\mu}\nu$ are oriented cup diagrams (see \cref{cellular basis} and \cref{Khov basis}). Hence, we conclude that $\Psi$ is a $\mathbb{Z}$-graded isomorphism of $\Bbbk$-algebras.
\end{proof}

\appendix
\section{Associativity of the arc algebra}
\label{appendix}

We will show in this appendix   that the arc algebra defined in \cite{TypeDKhov} is not actually associative 
with the signs as defined in their paper (we thank Eliot Grimont for pointing this out and for providing the following low-rank, simple example).  
This is fixed by our choice of signs here.

In \cref{get out of jail remark}, we note that the sole difference in signs in our paper from that of \cite{TypeDKhov}, 
occurs in  \eqref{spliteq}, where we use $(-1)^{\left \lfloor{l_\ps}\right \rfloor }$ (which differs from \cite{TypeDKhov}, where they use $(-1)^{\left \lceil{l_\ps}\right \rceil }$). 
We should highlight that in this paper, we have used the convention that the intersection of an arc (or cup) diagram with the horizontal axis occurs at the half integers, $\{\tfrac{1}{2},\tfrac{3}{2},\tfrac{5}{2}, \dots\}$, whereas in \cite{TypeDKhov} the convention was that the intersections occur at the strictly positive integers $\{1,2,3,\dots\}$. 
The sign in \cite[Section 5.2.2]{TypeDKhov} (the analogue of \eqref{spliteq}) is $(-1)^i$, where $i$ is the leftmost intersection with the horizontal axis of the cup on which surgery is performed; this
  translates to $(-1)^{\left \lceil{l_\ps}\right \rceil }$ in our convention. In what follows, in order
   to make a direct comparison of \cite[Section 5]{TypeDKhov}, we will use their integer convention.

In the following, let $\mu=(1^3), \la=(1,2,1), \nu=(1^2), \sigma=(1,2)$. We picture these weights and their associated cup diagrams in \cref{assoc1}.
\color{black}

\begin{figure}[ht!]

	$$  
$$ 
    
    \caption{$\mu,\la,\nu,\sigma$ pictured from left-to-right}
    \label{assoc1}
    \end{figure}

We then consider the three oriented circle diagrams $D_1=(\underline{\la}\mu\overline{\mu}), D_2=(\underline{\mu}\nu\overline{\nu}), D_3=(\underline{\nu}\nu\overline{\sigma})$ and calculate the product $D_1D_2D_3$ two different ways using the definition of the algebra as given in \cite[Section 5.2]{TypeDKhov}.

First by performing a merge $(3,4)$-surgery and then a split $(2, 3)$-surgery we have that $(D_1D_2)D_3$ is equal to:
\color{black}
\begin{equation}
(D_1D_2)D_3=
 \label{assoc2}
 \begin{aligned}
 & 
 \begin{minipage}{2cm}
 
\end{minipage})\coloneqq-D'.
\end{aligned}
\end{equation} 

\color{black}

However, if we calculate $D_1(D_2D_3)$ by first reconnecting two propgating lines in a  $(2, 3)$-surgery and performing two merges (the leftmost one being trivial) we obtain that:
\color{black}
\begin{equation}
D_1(D_2D_3)=
 \label{assoc2}
 \begin{aligned}
 & 
 \begin{minipage}{2.1cm}
%
 
\end{minipage}=D'.
\end{aligned}
\end{equation} 

 This calculation shows that the algebra as defined in \cite[Section 5]{TypeDKhov} is not associative. 
 This case (and its generalisations to larger ranks) was not considered explicitly in their proof of associativity \cite[Sections 5.3-5.6]{TypeDKhov}, but we remark that it is the only missing case in their proof.  Fixing the sign, as done in our \cref{spliteq}, remedies this small sign error; and the rest of their proof goes through unchanged.

      \bibliographystyle{amsalpha}   
\bibliography{Ben} 
   
\end{document}